\newcommand{\bk}{\mathbf{k}}
\newcommand{\Fin}{\mathrm{Fin}}
\newcommand{\fin}{\mathrm{fin}}
\newcommand{\FIN}{\mathrm{FIN}}
\newcommand{\depth}{\mathrm{depth}}
\newcommand{\supp}{\mathrm{supp}}
\newcommand{\dom}{\mathrm{dom}}
\newcommand*{\lex}{<_{\textnormal{lex}}}
\newcommand*{\omle}[1]{\omega^{\protect \centernot \downarrow \le #1}}
\newcommand*{\ome}[1]{\omega^{\protect \centernot \downarrow #1}}
\newcommand*{\set}[1]{\left\{#1\right\}}
\newcommand{\om}{\omega}
\newcommand{\sse}{\subseteq}
\newcommand{\re}{\upharpoonright}
\newcommand{\al}{\alpha}
\newcommand{\Fraisse}{Fra{\"{i}}ss{\'{e}}}
\newcommand{\rgl}{\rangle}
\newcommand{\lgl}{\langle}
\newcommand{\ra}{\rightarrow}
\journalname{Archive for Mathematical Logic}
\title{Ramsey degrees of ultrafilters,
pseudointersection numbers,
and  the tools of   topological Ramsey spaces}
 \thanks{Dobrinen was supported by National Science Foundation Grants  DMS-1600781 and DMS-1901753. Navarro Flores was supported by CONACYT, and Dobrinen's National Science Foundation Grant 
 DMS-1600781.}
\author{Natasha Dobrinen \and
        Sonia Navarro Flores 
}
\institute{Natasha Dobrinen \at
              Department of Mathematics, University of Denver, Denver, USA \\
              \email{natasha.dobrinen@du.edu}           
           \and
           Sonia Navarro \at
              Unidad Morelia,
Universidad  Nacional Aut\'{o}noma de M\'{e}xico, Morelia, Mexico\\
              \email{sonian@matmor.unam.mx}
}
\date{Received: date / Accepted: date}
\begin{document}

\maketitle
\begin{abstract}
This paper investigates properties of $\sigma$-closed forcings which generate ultrafilters satisfying weak partition relations. 
The Ramsey degree  of an ultrafilter $\mathcal{U}$ for $n$-tuples,  denoted $t(\mathcal{U},n)$,  is the  smallest number $t$ such that given any $l\ge 2$ and  coloring $c:[\om]^n\ra  l$, there is a member $X\in\mathcal{U}$ such that the restriction of $c$ to $[X]^n$ has no more than $t$ colors. 
Many well-known  $\sigma$-closed  forcings are known to  generate ultrafilters with finite Ramsey degrees, but finding the precise degrees can sometimes prove elusive  or quite involved,
at best. 
In this paper, we utilize methods of topological Ramsey spaces  to 
calculate Ramsey degrees   of several classes of ultrafilters generated by $\sigma$-closed forcings.
These include a hierarchy of forcings due to Laflamme which generate weakly Ramsey and weaker rapid p-points,   forcings of Baumgartner and Taylor and of Blass and  generalizations,  and  the collection of non-p-points generated by the forcings 
$\mathcal{P}(\om^k)/\Fin^{\otimes k}$. 
We provide a general approach to calculating the Ramsey degrees of these ultrafilters, obtaining 
 new results as well as   streamlined   proofs of  previously known results. 
In the second half of the paper, 
 we  calculate pseudointersection and tower numbers for these $\sigma$-closed forcings   and their relationships with the  classical  pseudointersection number $\mathfrak{p}$.
\end{abstract}

\keywords{Topological Ramsey spaces \and forcing \and ultrafilters \and partition relations \and  pseudointersection number \and Ellentuck space}
\subclass{03E02 \and 03E05 \and  03E17 \and  03E35 \and 03E50 \and 05C55 \and 05D10  \and 54D99 }

\section{Introduction}\label{intro}

Ramsey's Theorem states that given any $n,l\ge 2$ and a coloring $c:[\om]^n\ra l$, there is an infinite subset $X\sse \om$ such that $c$ is constant on $[X]^n$ (\cite{Ramsey30}). 
A {\em Ramsey ultrafilter} is an ultrafilter satisfying instances of Ramsey's Theorem. 
Thus, an ultrafilter $\mathcal{U}$ on base set $\om$ is {\em Ramsey} if and only if  given any  $U\in\mathcal{U}$, 
$n,l\ge 2$, and coloring $c:[U]^n\ra l$, there is some $X\in\mathcal{U}$ with $X\sse U$ such that $c$ is constant on $[X]^n$.
This is denoted by 
\begin{equation}
\mathcal{U}\ra(\mathcal{U})^n_l.
\end{equation}
Ramsey ultrafilters can be constructed using CH, or just  
cov$(\mathcal{M})=\mathfrak{c}$.
They can also be constructed by forcing with 
 $\mathcal{P}(\om)/\Fin$, or equivalently, $([\om]^{\om},\sse^*)$.
Generalizations of this forcing in many different directions  have been used to construct ultrafilters which are not Ramsey but have similar properties in the following sense.

  \begin{definition}\label{defn.Ramseydegree}
  Let  $\mathcal{U}$ be an ultrafilter on a countable base set $S$.
 For $n\ge 2$,   define 
 $t(\mathcal{U},n)$
 to be the least  number $t$, if it exists,  such that for each $l\ge 2$ and each coloring $c:[S]^n\ra l$,
 there is a member $X\in \mathcal{U}$ such that 
 the restriction of $c$ to $[X]^n$ takes no more than $t$ colors.
When $ t(\mathcal{U},n)$ exists,  it 
is called the {\em Ramsey degree} of $\mathcal{U}$ for $n$-tuples, and 
we write 
 $$
 \mathcal{U}\ra(\mathcal{U})^n_{l,t(\mathcal{U},n)}.
 $$
  \end{definition}

A weakly Ramsey ultrafilter is one satisfying $ t(\mathcal{U},2)=2$. 
A plethora of 
ultrafilters with  finite Ramsey degrees have been forced by various $\sigma$-closed posets in the literature. 
The following examples are indicative of the variety of  such posets. 
 Laflamme in \cite{Laflamme} forced a hierarchy of rapid p-points above a weakly Ramsey ultrafilter.
  Baumgartner and Taylor in \cite{Baumgartner/Taylor78} forced  $k$-arrow, not $(k+1)$-arrow ultrafilters, for each $k\ge 2$.  These are rapid p-points  satisfying an asymmetric partition relation.
  In   \cite{Blass73},
   Blass  constructed a p-point which has two Rudin-Keisler incomparable p-points below it, using a sort of two-dimensional forcing. 
  The forcing $\mathcal{P}(\om\times\om)/\Fin^{\otimes 2}$ was investigated by  Szyma\'{n}ski and Zhou in \cite{Szymanski/Zhou81} and shown to produce an ultrafilter, denoted $\mathcal{G}_2$,  which is not a p-point but has Ramsey degree $t(\mathcal{G}_2,2)=4$. 
  This ultrafilter was shown to be a weak p-point in \cite{Blass/Dobrinen} and  investigations of its Tukey type are included in that paper. 
Further extensions of $\mathcal{P}(\om)/\Fin$ to finite  dimensions  ($\mathcal{P}(\om^k)/\Fin^{\otimes k}$ for $2\le k<\om$) were  investigated by Kurili\'{c} in \cite{Kurilic15} and by
   Dobrinen in \cite{highDimensional}.
   In fact, the  natural hierarchy of forcings $\mathcal{P}(\om^{\al})/\Fin^{\otimes \al}$,  for all countable ordinals $\al$,  was shown to be forcing equivalent to certain topological Ramsey spaces in \cite{DobrinenJML16}.
   These and other ultrafilters will be investigated in this paper in two directions:
   First, we will find a general method for calculating Ramsey degrees of ultrafilters from these classes. 
   Second, we will investigate the pseudointersection and tower numbers of the forcings which generate these ultrafilters.

In recent years, the use of topological Ramsey spaces to investigate forcings generating ultrafilters has provided means for obtaining results 
which remained elusive when simply using the original forcings.
This began in \cite{R1}, where Dobrinen and Todorcevic constructed  a Ramsey space dense inside of the  forcing of  Laflamme in \cite{Laflamme} 
which produces a weakly Ramsey, not Ramsey ultrafilter,  denoted $\mathcal{U}_1$,
 in order to calculate the exact Rudin-Keisler and Tukey structures below this ultrafilter.
 This idea was extended in 
  \cite{Rn}, 
\cite{fraisseClasses}, \cite{highDimensional},
and 
\cite{DobrinenJML16}, providing new collections of topological Ramsey spaces dense in known forcings, 
such as those of Baumgartner-Taylor, Blass, Laflamme, and Szyma\'{n}ski-Zhou mentioned above,  
as well as creating new forcings which produce ultrafilters with interesting partition relations. 
Such Ramsey spaces were used to find exact Rudin-Keisler and Tukey structures below those ultrafilters. 
An overview of this area can be found in 
\cite{Dobrinen/Seals}.

Once constructed, it turned  out that  the topological Ramsey space structure of these forcings can be used to investigate how closely these ultrafilters resemble a Ramsey ultrafilter.
One such investigation was recently carried out by 
 Dobrinen and Hathaway in 
\cite{Dobrinen/Hathaway19}, where they show that  each of  the ultrafilters mentioned above have properties similar to that of a Ramsey ultrafilter in the sense of the barren extensions of Henle, Mathias, and Woodin in \cite{Henle/Mathias/Woodin85}.
In this paper, we investigate properties of the same ultrafilters in \cite{Dobrinen/Hathaway19}, utilizing topological Ramsey space techniques to better handle the properties of the forcings.

  Background  on classical results is provided in Section \ref{sec:Background}  and 
  an overview of topological Ramsey spaces and their associated ultrafilters appears in Section \ref{sec:tRs}.
While there is not space to reproduce all information on the  topological Ramsey spaces addressed in  this paper,
an overview of their salient properties to aid the reader  appear in Section \ref{sec:tRs&uf}.
 These include the spaces mentioned above as well as the Carlson-Simpson infinite dual Ramsey space and the space $\FIN^{[\infty]}$ of infinite block sequences of Milliken and related spaces $\FIN_k^{[\infty]}$  of Todorcevic, building on work of Gowers.

The first focus of this paper is on finding exact Ramsey degrees of  the aforementioned ultrafilters from the papers  
\cite{Laflamme}, \cite{Baumgartner/Taylor78},   \cite{Blass73}, \cite{Szymanski/Zhou81}, \cite{Kurilic15},  and \cite{highDimensional},
 as well as new forcings from \cite{fraisseClasses}.
Finding the Ramsey degrees for these ultrafilters based on the original forcings is not always simple.
A case in point is Laflamme's proof that the Ramsey degrees for the weakly Ramsey ultrafilter $\mathcal{U}_1$ are  
$t(\mathcal{U}_1,n)= 2^{n-1}$, for $n\ge 2$.
His proof can be greatly simplified by utilizing the structure of the topological Ramsey space dense in his forcing.
In Section \ref{sec:degrees}, we provide a simplified proof of this result, and provide  streamlined proofs of  other  results 
stated without proof in 
 \cite{Laflamme} (see Theorem \ref{thm.RamseydegLaflamme}).
 In Theorem \ref{numbers}, 
  we provide a general approach for determining Ramsey degrees for ultrafilters forced by Ramsey spaces with certain natural properties.
  Then in Subsection
\ref{subsec.5.2}, we apply this to find Ramsey degrees for the ultrafilters of Laflamme, Baumgartner-Taylor,  Blass, and the hypercube spaces from \cite{fraisseClasses}.

In Section  \ref{sec:hdegrees} we calculate the  Ramsey degrees $t(\mathcal{G}_k,2)$, where $\mathcal{G}_k$ is the ultrafilters forced by $\mathcal{P}(\om^k)/\Fin^{\otimes k}$.
These results are new, and the approach using Ramsey spaces dense in these forcing  is much more succinct 
than approaching the Ramsey degrees using the forcings 
$\mathcal{P}(\om^k)/\Fin^{\otimes k}$.

In Section \ref{sec:Cardinals}, we investigate pseudointersection and tower numbers related to several classes of topological Ramsey spaces.
   In \cite{Malliaris/ShelahJAMS16}, Malliaris and Shelah solved a longstanding open problem, proving  that 
   $\mathfrak{p}=\mathfrak{t}$.
Any $\sigma$-closed forcing has naturally defined pseudointersection and tower numbers, and it is of interest to know when equality holds between these cardinal invariants.     
Each topological Ramsey space has a natural $\sigma$-closed quasi-ordering such that its separative quotient is isomorphic to the separative quotient of the Ramsey space with its given quasi-order. 
Given such a $\sigma$-closed  order on a topological Ramsey space $\mathcal{R}$, one may define its pseusodintersection number  $\mathfrak{p}_{\mathcal{R}}$ and tower number $\mathfrak{t}_{\mathcal{R}}$
 (see Definition \ref{defn.p_Rt_R}).

In Subsection \ref{subsec.IEP}, we prove  that all topological Ramsey spaces whose members are infinite sequences with a certain amount of independence between the entries of the sequence
 have $\mathfrak{p}_{\mathcal{R}}=\mathfrak{t}_{\mathcal{R}}=\mathfrak{p}$.
Such spaces include those dense in Laflamme's forcings $\mathbb{P}_{\al}$, $1\le\al<\om_1$, the forcings of Baumgartner-Taylor and of Blass, those in \cite{fraisseClasses}.
In contrast, in Subsection \ref{subsec.ptE_al},
we show that pseudointersection and tower numbers associated with the forcings $\mathcal{P}(\om^{\al})/\Fin^{\otimes\al}$ are all $\om_1$.
For $\al$ a finite ordinal, this reproduces results of Kurili\'{c} in \cite{Kurilic15} albeit in what we consider a streamlined fashion.
However, for infinite countable ordinals $\al$, these results are new.
Finally, in Subsection
\ref{subsec.CS} we show that the Carlson-Simpson space forces a Ramsey ultrafilter.
This is especially interesting as it is a dual space, quite different from the other spaces considered in this paper; and while it generates a Ramsey ultrafilter, its pseudointersection and tower numbers are $\om_1$, proved by Matet in
 \cite{matet1986}.

 It remains open whether the pseudointersection and tower numbers are always equal for topological Ramsey spaces,  and whether there are any such spaces for which the pseudointersection or tower numbers can lie strictly between $\om_1$ and $\mathfrak{p}$.
 These and other questions appear in 
Section \ref{sec:questions}.


\section{Classical background and definitions}
\label{sec:Background}

\spnewtheorem*{notation}{Notation}{\bf}{\rm}
\spnewtheorem*{fact}{Fact}{\bf}{\rm}

This section contains basic definitions and background for some classical results on ultrafilters and cardinal invariants on  $\omega$.

\begin{definition}\label{defn.SFIP}
\begin{enumerate}
\item
 For two sets $X, Y \subseteq \omega$ we say that $X$ is \emph{almost contained} in $Y$, denoted $X \subseteq^{\ast} Y$, if $X \setminus Y$ is  finite.
\item
We say that a family of sets  $\mathcal{F} \subseteq [\omega]^{\omega}$ has the \emph{strong finite intersection property} (SFIP) if for every finite subfamily $ \mathcal{X} \in [\mathcal{F}]^{<\om}$, $\bigcap \mathcal{X}$ is an infinite subset of $\omega$.
\item Given $ \mathcal{F} \subseteq [\omega]^{\omega}$, a pseudointersection of the family $\mathcal{F}$ is a set  $Y \in [\omega]^{\omega}$	such that for every $X \in \mathcal{F}$, $ Y \subseteq ^{\ast} X $.
\item The \emph{pseudointersection number} $\mathfrak{p}$ is the smallest cardinality of a family $\mathcal{F} \subseteq [\omega]^{\omega}$ which has the SFIP but does not have a pseudointersection.
\item
A {\em tower} is  a sequence $\lgl X_{\al}:\al<\delta\rgl$ of members of $[\om]^{\om}$
which is linearly ordered by $\supseteq^*$  and has no
 pseudointersection.
The {\em tower number} $\mathfrak{t}$ is the smallest cardinality of  a tower. 
\end{enumerate}
\end{definition}

It is well-known that Martin's Axiom implies $ \mathfrak{p}= \mathfrak{c}$;  see for instance
    \cite{Barto} for a proof.
    The cardinal invariant
$\mathfrak{m}(\sigma$-centered$)$ is  defined to be the
minimum cardinal  $\kappa$
for which
 there exists a $\sigma$-centered partial order
 $\mathcal{P}$ and a family $\mathcal{D}$ of $\kappa$ many  dense subsets of  $\mathcal{P}$ which does not admit any $\mathcal{D}$-generic filter.
 Bell proved that $\mathfrak{m}(\sigma$-centered$)=\mathfrak{p}$ (see \cite{Bell}).
 It is clear from the definitions that $\mathfrak{p}\le\mathfrak{t}$, and   one of the most important longstanding open problems in cardinal invariants  was whether the two are equal.
Malliaris and Shelah recently proved that, indeed, 
 $\mathfrak{p}=\mathfrak{t}$
(see   \cite{pt} and \cite{Malliaris/ShelahJAMS16}).

\begin{definition}
	Let $\mathcal{U}$ be an ultrafilter on  $\om$.
	\begin{enumerate}
		\item
		$\mathcal{U}$ is \emph{Ramsey} if for each coloring $c: [ \omega ]^{2} \rightarrow 2 $, there is a $U \in \mathcal{U}$ such that $U$ is homogeneous for $c$, meaning $| c[U]^{2} | =1$.
		\item
		$\mathcal{U}$ is \emph{weakly Ramsey} if for each coloring $c: [ \omega ]^{2} \rightarrow 3 $, there is a $U \in \mathcal{U}$ such that $| c[U]^{2} | \leq 2$.
	\end{enumerate}
\end{definition}

It is well known that an ultrafilter $\mathcal{U}$  on $\om$ is Ramsey if and only if it is 
 \emph{selective}: 
 For each
		$\supseteq$-decreasing sequence $\langle U_{n} \rangle_{n \in \omega}$ of members of $\mathcal{U}$, there is an $X \in \mathcal{U}$ such that for each $n < \omega$, $X \subseteq^{\ast}U_{n}$ and moreover $|X \cap (U_{n}\backslash U_{n+1})| \leq 1$.

If  $\mathcal{U}$ is a Ramsey ultrafilter, then it is routine to show that,  in the notation of Definition \ref{defn.Ramseydegree}, 
$t(\mathcal{U},n)=1$ for all $n\ge 1$.
An ultrafilter $\mathcal{U}$ is weakly Ramsey if and only if $t(\mathcal{U},2)\le 2$.
It is clear that any Ramsey ultrafilter is weakly Ramsey.  
On the other hand,
Blass proved in \cite{Blass74} that there are  weakly Ramsey ultrafilters which are not Ramsey.
Proofs of the following facts can be found in \cite{Barto}.

\begin{theorem}
$\langle [\omega]^{\omega}, \subseteq^{\ast} \rangle$ forces a Ramsey ultrafilter.
\end{theorem}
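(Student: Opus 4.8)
The plan is to combine the $\sigma$-closedness of the poset with Ramsey's Theorem. First I would record that $\mathbb{P}=\lgl[\om]^\om,\sse^*\rgl$ is $\sigma$-closed: any $\sse^*$-descending sequence $\lgl A_n:n<\om\rgl$ of conditions admits a pseudointersection (take a diagonal subset almost contained in each $A_n$), which is a lower bound in $\mathbb{P}$. A standard consequence is that the forcing adds no new subsets of $\om$, so that $([\om]^\om)^{V[G]}=([\om]^\om)^V$ and, in particular, every coloring $c:[\om]^n\ra l$ appearing in the extension already lies in the ground model.

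Next I would identify the generic ultrafilter. Let $G$ be $\mathbb{P}$-generic over $V$ and put $\mathcal{U}=\{X\in[\om]^\om:\exists A\in G\ (A\sse^* X)\}$. To see $\mathcal{U}$ is an ultrafilter, fix $X\sse\om$ and consider $D_X=\{A\in[\om]^\om: A\sse^* X \text{ or } A\sse^*(\om\setminus X)\}$. This set is dense, since given any condition $B$, at least one of $B\cap X$ and $B\cap(\om\setminus X)$ is infinite, and that set refines $B$ into the corresponding side of the partition. By genericity $G$ meets $D_X$, so $\mathcal{U}$ decides $X$; the remaining filter properties and upward closure follow immediately from $G$ being a filter.

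The heart of the argument is Ramseyness. Fix in $V[G]$ a coloring $c:[\om]^2\ra 2$; by the first paragraph $c\in V$. Let $D_c=\{A\in[\om]^\om:\abs{c[A]^2}=1\}$ be the set of conditions homogeneous for $c$. Ramsey's Theorem makes $D_c$ dense: any condition $B$ contains an infinite homogeneous subset $A\sse B$, and then $A\le B$ in $\mathbb{P}$. Hence $G$ meets $D_c$, yielding a homogeneous $A\in G\sse\mathcal{U}$. This gives $\mathcal{U}\ra(\mathcal{U})^2_2$, so $\mathcal{U}$ is Ramsey; running the identical density argument with the full Ramsey Theorem shows in fact that $t(\mathcal{U},n)=1$ for every $n\ge 2$.

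The main obstacle, and the only place where genuine care is needed, is the passage to the ground model in the previous paragraph: the density of $D_c$ is a statement about a \emph{fixed} coloring, so it is essential that $c$ be a ground-model object rather than merely a name. This is exactly what $\sigma$-closedness secures, through the absence of new reals. One could avoid invoking no-new-reals by instead diagonalizing through $\mathbb{P}$ to decide the value that $\dot c$ assigns to each pair before applying Ramsey's Theorem to the resulting ground-model coloring, but this fusion-style argument is more laborious without being conceptually harder.
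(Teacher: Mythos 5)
Your proof is correct. The paper itself gives no proof of this theorem (it defers to the cited reference), and your argument---$\sigma$-closedness of $\langle[\omega]^{\omega},\subseteq^{\ast}\rangle$, hence no new reals, so every coloring in the extension is a ground-model object, combined with the density of homogeneous conditions via Ramsey's Theorem and a density argument showing the generic filter generates an ultrafilter---is exactly the standard proof found in the literature for this fact.
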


Next, we introduce the Ellentuck topology on $[\om]^{\om}$.
In this section, as in the rest of the paper, we use notation and terminology from \cite{Stevolibro}.
Given $a \in [\omega]^{< \omega}$ and $A \in [\omega]^{\omega}$, define $$[a,A]=\{B \in [\omega]^{\omega }: a \sqsubset B \subseteq  A \},$$
where $a\sqsubset B$  denotes that $a$ is an initial segment of $B$.
The {\em Ellentuck topology} on $[\omega]^{\omega}$
is the topology generated by  basic open sets the sets of the form $[a,A]$.
This topology refines the standard metric topology on the Baire space.
The space $[\omega]^{\omega}$ with the Ellentuck topology is called the {\em Ellentuck space}.

\begin{definition}
	A set $\mathcal{X} \subseteq [\omega]^{\omega}$ is called \emph{Ramsey} if for every non-empty basic open set $[a,A]$ there is $B \in [a,A]$ with $[a,B] \subseteq \mathcal{X}$ or $[a,B] \cap \mathcal{X} = \emptyset$.
\end{definition}

While the Axiom of Choice implies that there are subsets of $[\om]^{\om}$ which are not Ramsey (see \cite{Erdos/Rado52}),  restricting to definable sets allowed for the development  of Ramsey theory of subsets of $[\om]^{\om}$.
A highlight of this development was the  Galvin-Prikry Theorem  \cite{Galvin/Prikry73}  that all Borel sets are Ramsey.
 Silver later proved in  \cite{Silver70}  that all analytic sets are Ramsey.
The pinnacle of this line of work was Ellentuck's   topological characterization of those subsets of $[\om]^{\om}$ which are Ramsey.
His work was strongly informed by the above mentioned theorems  as well as closely related work of Mathias, which was published  in \cite{Mathias77}.

\begin{theorem}[Ellentuck, \cite{Ellentuck74}]
	Let $\mathcal{X} \subseteq [\omega]^{\omega}$. 
	Then $\mathcal{X}$ is Ramsey if and only if
	$\mathcal{X}$ has the Baire Property in the Ellentuck topology.
\end{theorem}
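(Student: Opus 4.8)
The plan is to prove the two implications separately, treating the forward direction (Ramsey $\Rightarrow$ Baire property) as routine and concentrating the effort on the converse. First I would record the elementary equivalence that a set $\mathcal{X}$ is \emph{Ramsey null} --- meaning that for every $[a,A]$ some $B \in [a,A]$ has $[a,B]\cap\mathcal{X}=\emptyset$ --- precisely when it is nowhere dense in the Ellentuck topology; since the sets $[a,B]$ form a basis, this is essentially a reformulation of the definition. For the forward direction, given a Ramsey set $\mathcal{X}$ with Ellentuck-interior $\mathcal{O}$, the Ramsey property applied to any $[a,A]$ yields $B\in[a,A]$ with $[a,B]\subseteq\mathcal{X}$ or $[a,B]\cap\mathcal{X}=\emptyset$; as $[a,B]$ is open, the first alternative forces $[a,B]\subseteq\mathcal{O}$, so in either case $[a,B]$ misses $\mathcal{X}\setminus\mathcal{O}$. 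Thus $\mathcal{X}\setminus\mathcal{O}$ is Ramsey null, hence nowhere dense, and the decomposition $\mathcal{X}=\mathcal{O}\mathbin{\triangle}(\mathcal{X}\setminus\mathcal{O})$ witnesses the Baire property.

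The substance is the converse. The engine I would use is the Galvin--Prikry--Nash-Williams combinatorial forcing. Fixing $\mathcal{X}$, for finite $a$ and infinite $A$ with $[a,A]$ nonempty, say $A$ \emph{accepts} $a$ if $[a,A]\subseteq\mathcal{X}$ and \emph{rejects} $a$ if no $B\in[a,A]$ accepts $a$; both relations persist under shrinking $A$, and every $a$ is \emph{decided} on some $B\in[a,A]$. The pivotal step is a fusion argument producing, from any $[a,A]$, a single $B\in[a,A]$ deciding $a\cup\{n\}$ for every $n\in B$ with $n>\max(a)$; iterating this along a fusion sequence and amalgamating shows that if $A$ rejects $a$ then some $B\in[a,A]$ has $[a,B]\cap\mathcal{X}=\emptyset$. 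Via Ellentuck-openness (a point of $\mathcal{X}\cap[a,B]$ would carry a basic neighborhood inside $\mathcal{X}$, contradicting rejection of all extensions of $a$), this delivers the Ramsey property for Ellentuck-open $\mathcal{X}$.

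The second ingredient I would establish is that the Ellentuck-meager sets coincide with the Ramsey null sets; equivalently, the Ramsey null ideal is $\sigma$-additive, so that a countable union of nowhere dense sets is again Ramsey null. This too is a fusion/diagonalization, thinning through the given sequence of nowhere dense sets while arranging to avoid each one in turn. With both ingredients available, any $\mathcal{X}$ with the Baire property decomposes as $\mathcal{X}=\mathcal{O}\mathbin{\triangle}\mathcal{M}$ with $\mathcal{O}$ open and $\mathcal{M}$ meager; since $\mathcal{M}$ is Ramsey null I would first choose $B_0\in[a,A]$ with $[a,B_0]\cap\mathcal{M}=\emptyset$, so that $\mathcal{X}$ agrees with the open set $\mathcal{O}$ throughout $[a,B_0]$, and then apply the open case inside $[a,B_0]$ to extract $B\in[a,B_0]$ with $[a,B]\subseteq\mathcal{X}$ or $[a,B]\cap\mathcal{X}=\emptyset$.

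The hard part will be the combinatorial forcing and its fusion arguments --- specifically, proving that rejection of $a$ propagates to the one-step extensions $a\cup\{n\}$ on a common refinement, and that the Ramsey null ideal is $\sigma$-complete (equivalently, that Ellentuck-meager implies Ellentuck-nowhere dense). The delicate point throughout is to decide infinitely many finite extensions simultaneously while keeping the witnessing set infinite; once these diagonalizations are in place, the remainder is bookkeeping around the symmetric-difference decomposition handed to us by the Baire property.
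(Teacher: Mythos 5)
The paper does not prove this theorem at all: it is stated as classical background and cited to Ellentuck's original article, so there is no in-paper argument to compare against. Your outline is the standard proof from the literature (Galvin--Prikry combinatorial forcing plus fusion, which is essentially Ellentuck's own argument and the one reproduced in Todorcevic's book), and its architecture is sound: open sets are Ramsey via accept/reject and a fusion of decisions over one-point extensions; the Ramsey null sets form a $\sigma$-ideal; and a set with the Baire property is handled by first shrinking into $[a,B_0]$ disjoint from the meager part $\mathcal{M}$, where $\mathcal{X}$ coincides with its open part $\mathcal{O}$, and then applying the open case inside $[a,B_0]$.

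One genuine imprecision needs repair. Your opening claim that ``$\mathcal{X}$ is Ramsey null precisely when it is nowhere dense'' is \emph{not} a reformulation of the definition. Ramsey null demands a witness $[a,B]$ with the \emph{same} stem $a$, whereas nowhere density only provides some basic open $[b,B] \subseteq [a,A]$ whose stem $b$ may properly extend $a$; collapsing the stem back to $a$ is exactly the kind of statement that requires the combinatorial forcing. Only the trivial direction (Ramsey null $\Rightarrow$ nowhere dense) is definitional, and fortunately that is the only direction your forward implication uses. But the converse direction matters in your second ingredient: to conclude that meager sets are Ramsey null from $\sigma$-additivity of the Ramsey null ideal, you must first know that each nowhere dense set is Ramsey null, and this is not free. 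It does follow from the open case you establish --- given nowhere dense $\mathcal{N}$, apply the Ramsey property of the open set $[\omega]^{\omega} \setminus \overline{\mathcal{N}}$ to $[a,A]$ and note that the alternative $[a,B] \subseteq \overline{\mathcal{N}}$ would give $\overline{\mathcal{N}}$ nonempty interior --- so the gap is repairable with tools already in your plan, but the step must be made explicitly rather than dismissed as a definition-chase.
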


After this work of Ellentuck, many similar  topological spaces, 
 members  of which  are   infinite sequences,
 were formed and their Ramsey properties were investigated.
 Their similarities were  noticed by Carlson in \cite{Carlson88}, where he coined the term {\em Ramsey space} and proved a general theorem from which these other results  followed as corollaries.
 This line of work was expanded by Carlson and Simpson in
 \cite{Carlson/Simpson90}.
 In his book \cite{Stevolibro},
 Todorcevic set forth
 a simplified set of axioms which guarantee that a space has  Ramsey  properties similar to the Ellentuck space.
 This is the subject of the next section.


\section{Background on topological Ramsey spaces and their associated ultrafilters}\label{sec:tRs}

In this section, we introduce topological Ramsey spaces and the main theorems which provide advantageous techniques.
 The following definition is taken from \cite{Stevolibro}. 
 The axioms $\mathbf{A.1}$--$\mathbf{A.4}$ are defined for triples $(\mathcal{R}, \leq,r)$ of objects with the following properties, 
 where $\mathcal{R}$ is a nonempty set, $\leq$ is a quasi-ordering on $\mathcal{R}$, and $r: \mathcal{R} \times \omega \longrightarrow \mathcal{AR}$ is a mapping giving us the sequence $(r_{n}(\cdotp) = r( \cdotp ,n))$ of approximation mappings, where 
$$
\mathcal{AR}=\{r_n(A):A\in\mathcal{R}\mathrm{\ and\ }n<\om\}
$$
 is the collection of all {\em finite approximations} to members of $\mathcal{R}$. 
For $a \in \mathcal{AR}$ and $A,B \in \mathcal{R}$, $$[a,B] = \{ A \in \mathcal{R} : A \leq B \, \mathrm{and} \, \exists n \in \omega (r_{n}(A)=a) \}.$$

For $a \in \mathcal{AR}$, let $|a|$ denote  the integer $k$ such that $a=r_{k}(A),$ for some $A \in \mathcal{R}$.
 If $m< n$,  $A\in \mathcal{R}$, $a=r_{m}(A)$ and $b=r_{n}(A)$, then we will write $a=r_{m}(b)$.
For $a,b \in \mathcal{AR}$, $a \sqsubseteq b$ if and only if $a = r_{m}(b)$ for some $m \leq |b|$; if
 $a \sqsubseteq b$ and
 $a\ne b$, we write $a \sqsubset b.$
For each $n < \omega$, $\mathcal{AR}_{n} = \{ r_{n}(A): A \in \mathcal{R} \}$;
then
$\mathcal{AR}=\bigcup_{n<\om}\mathcal{AR}_n$.

\begin{description}
	\item[$\mathbf{A.1}$]
	\begin{itemize}
		\item[(a)] $r_{0}(A)= \emptyset$ for all $A \in \mathcal{R}$.
		\item[(b)] $A \neq B$ implies $r_{n}(A) \neq r_{n}(B)$ for some $n$.
		\item[(c)] $r_{n}(A) = r_{m}(B)$ implies $n=m$ and $r_{k}(A)=r_{k}(B)$ for all $k < n.$
		
	\end{itemize}
	
	\hspace{.5cm}
	
	\item[$\mathbf{A.2}$] There is a quasi-ordering $\leq_{\fin}$ on $\mathcal{AR}$ such that
	\begin{itemize}
		\item[(a)] $\{ a \in \mathcal{AR}: a \leq_{\fin}b \}$ is finite for all $b \in \mathcal{AR}$,
		\item[(b)] $A \leq B$ if and only if for each $n \in \omega$ there exists $m \in \omega$ such that $r_{n}(A) \leq_{\fin} r_{m}(B)$,
		\item[(c)] For every $a,b,c \in \mathcal{AR}$, if $a \sqsubset b$ and $b \leq_{\fin} c$ then there exists $d \in \mathcal{AR}$ such that $d \sqsubset c$ and $a \leq_{\fin}d$.
		\end{itemize}
		
	\hspace{.5cm}
	
The notation 	$\depth_{B}(a)$ is defined to be the least $n$, if it exists, such that $a \leq_{\fin}r_{n}(B)$.
 If such an $n$ does not exists, then write $\depth_{B}(a)= \infty$. If $\depth_{B}(a) =n < \infty$, then $[\depth_{B}(a),B]$ denotes $[r_{n}(B),B]$.
	
	\hspace{.5cm}
	
	\item[$\mathbf{A.3}$]
	\begin{itemize}
		\item[(a)] If $\depth_{B}(a)< \infty$ then $[a,A] \neq \emptyset$ for all $A \in [\depth_{B}(a),B]$.
		\item[(b)] $A \leq B$ and $[a,A] \neq \emptyset$ imply that there is $A' \in [\depth_{B}(a),B]$ such that $\emptyset \neq [a,A'] \subseteq [a,A]$.
	\end{itemize}
	
	\hspace{.5cm}
	
	If $n > |a|$, then $r_{n}[a,A]$ is the collection of all $b \in \mathcal{AR}_{n}$ such that $a\sqsubset b$ and $b= r_m(B)$ for some $B\in [a,A]$.
	
	\hspace{.5cm}
	
	\item[$\mathbf{A.4}$] If $\depth_{B}(a) < \infty$ and if $\mathcal{O} \subseteq \mathcal{AR}_{|a|+1}$, then there is $A \in [\depth_{B}(a),B]$ such that $r_{|a|+1}[a,A] \subseteq \mathcal{O}$ or $r_{|a|+1}[a,A] \subseteq \mathcal{O}^{c}$.

\end{description}

The basic open sets are  exactly those
 sets of the form $[a,B]$, where $a\in\mathcal{AR}$ and $B\in\mathcal{R}$.
The 
\emph{Ellentuck topology} on $\mathcal{R}$ is the 
topology on $\mathcal{R}$ generated by the basic open sets.
The {\em metrizable topology} on 
$\mathcal{R}$ is generated by the sets of the form $\{A\in\mathcal{R}:a\sqsubset A\}$, where $a\in\mathcal{AR}$;
this is the  topology obtained by considering 
$\mathcal{R}$ as a subspace of the Tychonoff cube $\mathcal{AR}^{\omega}$, where $\mathcal{AR}$ is endowed with the discrete topology. 
 Note that the Ellentuck topology is finer than the metrizable topology on $\mathcal{R}$.

 Given the Ellentuck topology on $\mathcal{R}$, the notions of nowhere dense, and hence, of meager are defined in the natural way. Thus, we may say that a subset $\mathcal{X}$ of $\mathcal{R}$ has the property of Baire if $\mathcal{X}= \mathcal{O} \cap \mathcal{M}$ for some Ellentuck open set $\mathcal{O} \subseteq \mathcal{R}$ and Ellentuck meager set $\mathcal{M} \subseteq \mathcal{R}$.

\begin{definition}
	A subset $\mathcal{X}$ of $\mathcal{R}$ is \emph{Ramsey} if for every $\emptyset \neq [a,A]$, there is a $B \in [a,A] $ such that $[a,B] \subseteq \mathcal{X}$ or $[a,B] \cap \mathcal{X} = \emptyset$. $\mathcal{X} \subseteq \mathcal{R}$ is {\em Ramsey null} if for every $\emptyset \neq [a,A]$, there is a $B \in [a,A] $ such that $[a,B] \cap \mathcal{X} = \emptyset$.
\end{definition}

\begin{definition}
	A triple $(\mathcal{R}, \leq, r)$ is a \emph{topological Ramsey} space if every subset of $\mathcal{R}$ with the  property of Baire is Ramsey and every meager subset of $\mathcal{R}$ is Ramsey null.
\end{definition}

The following result is Theorem 5.4 in \cite{Stevolibro}.

\begin{theorem}[Abstract Ellentuck Theorem]
	If $(\mathcal{R}, \leq, r)$ is closed (as a subspace of $\mathcal{AR}^{\omega}$) and satisfies axioms $\mathbf{A.1}$, $\mathbf{A.2}$, $\mathbf{A.3}$ and $\mathbf{A.4}$, then every subset of $\mathcal{R}$ with the  property  of Baire  is Ramsey, and every meager subset is Ramsey null; in other words, the triple $(\mathcal{R}, \leq ,r)$ forms a topological Ramsey space.
\end{theorem}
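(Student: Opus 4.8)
The plan is to mirror the classical proof of Ellentuck's theorem (together with its Galvin--Prikry and Nash--Williams precursors), replacing each concrete use of the structure of $[\om]^{\om}$ by the appropriate one of the axioms $\mathbf{A.1}$--$\mathbf{A.4}$. The heart of the matter is a combinatorial forcing argument establishing the Ramsey property for metrically open sets; everything else is a bootstrapping of this core.

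First I would set up the combinatorial forcing. Fix a metrically open $\mathcal{X} \sse \mathcal{R}$. For a pair $(a,A)$ with $a\in\mathcal{AR}$, $A\in\mathcal{R}$ and $\depth_A(a)<\infty$, say that $A$ \emph{accepts} $a$ if $[a,A]\sse\mathcal{X}$, that $A$ \emph{rejects} $a$ if no $A'\in[\depth_A(a),A]$ accepts $a$, and that $A$ \emph{decides} $a$ if it accepts or rejects $a$. The first task is a one-step decision lemma: given $(a,A)$, there is $B\in[\depth_A(a),A]$ deciding every immediate extension $b\in r_{|a|+1}[a,B]$. Here axiom $\mathbf{A.4}$ is the engine --- it is exactly the pigeonhole that lets one thin $A$ so that the single-step extensions of $a$ are uniformly placed on one side of $\mathcal{X}$ --- while $\mathbf{A.3}$ guarantees the relevant sets $[a,B]$ remain nonempty and keeps the depth bookkeeping coherent, and $\mathbf{A.2}$ supplies the amalgamation (clause (c)) needed to pass from deciding $a$ to deciding its extensions.

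Second, I would run a fusion. Using the one-step lemma repeatedly I build a $\leq$-decreasing sequence $A_0\geq A_1\geq\cdots$ together with initial approximations, arranging that $A_{n+1}$ decides every approximation of length $\le n$ that it can see and that the approximations stabilize. The limit $B$ --- whose approximations are the stabilized common values --- lies in $\mathcal{R}$ precisely because $\mathcal{R}$ is \emph{closed} in $\mathcal{AR}^{\om}$; this is the one place the closedness hypothesis is indispensable. By construction $B$ decides every approximation $c$ with $a\sqsubseteq c\sqsubset B$, and a short argument (if $B$ accepts $a$ we are in the first alternative; otherwise $B$ rejects $a$, and rejection propagates down every branch through $a$, forcing $[a,B]\cap\mathcal{X}=\emptyset$) yields that the metrically open $\mathcal{X}$ is Ramsey.

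Finally, I would lift to the property of Baire. The same fusion shows that every metrically nowhere dense set is Ramsey null; a countable diagonalization (another fusion, now carried out across the countably many nowhere dense pieces) then shows every meager set is Ramsey null. Since the Ellentuck topology is generated by the sets $[a,B]$ and refines the metric topology, a set $\mathcal{X}$ with the Ellentuck property of Baire can be written as $\mathcal{O}\cap\mathcal{M}$ with $\mathcal{O}$ Ellentuck-open and $\mathcal{M}$ meager; shrinking into some $[a,B]$ on which $\mathcal{M}$ has empty intersection reduces $\mathcal{X}$ to an open set, which the previous step handles. I expect the genuinely delicate step to be the fusion: one must interleave the applications of $\mathbf{A.4}$ over all finite approximations in the correct order, keep the witnessing conditions $\leq$-decreasing and their approximations convergent, and verify via $\mathbf{A.3}$ that no $[a,\cdot]$ collapses to $\emptyset$ along the way, so that the closedness of $\mathcal{R}$ can deliver a genuine limit member simultaneously deciding all approximations.
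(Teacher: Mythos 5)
The paper does not actually prove this theorem: it is quoted as Theorem 5.4 of Todorcevic's book \cite{Stevolibro}, so the only meaningful comparison is with the standard proof given there, which your sketch follows in outline. Your first two paragraphs (combinatorial forcing, the one-step decision lemma via $\mathbf{A.4}$ with $\mathbf{A.2}$(c) and $\mathbf{A.3}$ doing the bookkeeping, fusion, and closedness of $\mathcal{R}$ in $\mathcal{AR}^{\omega}$ to secure the limit) are the right skeleton. The genuine gap is in your bootstrapping paragraph: you prove the Ramsey property for \emph{metrically} open sets and then try to reach the Ellentuck-topology statement through \emph{metric} nowhere density. The claim ``every metrically nowhere dense set is Ramsey null'' is false, already in the classical Ellentuck space: with $E$ the set of even numbers, the set $\mathcal{N}=[\emptyset,E]=\{X\in[\omega]^{\omega}:X\subseteq E\}$ is metrically closed with empty metric interior, hence metrically nowhere dense, yet every $B\in[\emptyset,E]$ satisfies $[\emptyset,B]\subseteq\mathcal{N}$, so $\mathcal{N}$ is as far from Ramsey null as possible. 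The same conflation infects your final reduction: writing an Ellentuck--Baire-property set as $\mathcal{O}\cap\mathcal{M}$, you need the \emph{Ellentuck}-open set $\mathcal{O}$ to be Ramsey, which your core lemma (stated only for metrically open sets) does not cover, and Ellentuck-open sets such as this same $[\emptyset,E]$ are typically not metrically open. The whole content of the theorem lies in the finer topology; ``meager'' and ``nowhere dense'' must be taken there.

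The repair is standard and preserves your architecture: run the combinatorial forcing directly for Ellentuck-open $\mathcal{X}$. The only use of openness in the rejection argument is that membership in $\mathcal{X}$ is witnessed by a basic neighborhood, and in the Ellentuck topology the basic neighborhoods are the sets $[c,A]$: if $X\in\mathcal{X}$, pick $c\sqsubset X$ and $A$ with $X\in[c,A]\subseteq\mathcal{X}$; then $[c,X]\subseteq[c,A]\subseteq\mathcal{X}$, so $X$ accepts $c$, and $\mathbf{A.3}$(b) converts this into the same contradiction with propagated rejection that you use in the metric case. Once Ellentuck-open sets are Ramsey, an Ellentuck-nowhere-dense set is Ramsey null (pass to its Ellentuck closure; its complement is Ellentuck open and dense, so the alternative in which $[a,B]$ is disjoint from that complement would put a nonempty open set inside a nowhere dense one), Ellentuck-meager sets are Ramsey null by your diagonal fusion, and the property-of-Baire case follows by your final reduction. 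In short, the skeleton survives verbatim, but every occurrence of ``metric'' in your last two paragraphs must be replaced by ``Ellentuck,'' and that replacement is exactly the step your proposal is missing.
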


Given $\mathcal{F} \subseteq \mathcal{AR}$ and $X \in \mathcal{R}$,  let $\mathcal{F}\upharpoonright X$ denote the set of all $s \in \mathcal{F}$ such that  $s=r_{n}(Y)$ for some $n \in \omega$ and $ Y\leq X$.

\begin{definition}
	A family $\mathcal{F} \subseteq \mathcal{AR}$ of finite approximations is
	\begin{enumerate}
		\item \emph{Nash-Williams} if for all $a,b \in \mathcal{F}$, $a \sqsubseteq b$ implies $a=b$.
		\item \emph{Ramsey} if for every partition $\mathcal{F}= \mathcal{F}_{0} \cup \mathcal{F}_{1}$ and every $X \in \mathcal{R}$, there are $Y \leq X$ and $i \in \{0,1\}$ such that $\mathcal{F}_{i}\upharpoonright Y = \emptyset$.
	\end{enumerate}
\end{definition}

The next theorem appears as Theorem 5.17 in \cite{Stevolibro}, and follows from the Abstract Ellentuck Theorem.

\begin{theorem}[Abstract Nash-Williams Theorem]\label{NashWilliams}
	Suppose that $(\mathcal{R}, \leq, r)$ is closed (as a subspace of $\mathcal{AR}^{\omega}$) and   satisfies $\mathbf{A.1}$--$\mathbf{A.4}$. Then every Nash-Williams family of finite approximations is Ramsey.
\end{theorem}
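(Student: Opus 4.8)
The plan is to deduce this from the Abstract Ellentuck Theorem by recoding the given partition of $\mathcal{F}$ as a pair of subsets of $\mathcal{R}$ that are open in the Ellentuck topology, and hence have the property of Baire. Fix a Nash-Williams family $\mathcal{F}$, a partition $\mathcal{F}=\mathcal{F}_0\cup\mathcal{F}_1$ into disjoint pieces, and $X\in\mathcal{R}$. For $i\in\{0,1\}$ I would set
\[
\mathcal{X}_i = \{ A \in \mathcal{R} : \exists n\, (r_n(A) \in \mathcal{F}_i) \},
\]
the collection of members of $\mathcal{R}$ having some finite approximation lying in $\mathcal{F}_i$. The first observation is that each $\mathcal{X}_i$ is open in the metrizable topology: if $r_n(A)\in\mathcal{F}_i$, then every $B$ with $r_n(A)\sqsubset B$ satisfies $r_n(B)=r_n(A)\in\mathcal{F}_i$, so the basic metric-open neighborhood $\{B : r_n(A)\sqsubset B\}$ of $A$ is contained in $\mathcal{X}_i$. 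Since the Ellentuck topology refines the metrizable one, $\mathcal{X}_i$ is Ellentuck open and therefore has the property of Baire, so by the Abstract Ellentuck Theorem each $\mathcal{X}_i$ is Ramsey.

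Next I would apply the Ramsey property of $\mathcal{X}_0$ and $\mathcal{X}_1$ in succession, starting from the basic open set $[\emptyset,X]$ (note $r_0(A)=\emptyset$ by $\mathbf{A.1}$(a), so $[\emptyset,X]$ is precisely $\{A:A\le X\}$). First use Ramseyness of $\mathcal{X}_0$ to get $Y_0\le X$ with $[\emptyset,Y_0]\subseteq\mathcal{X}_0$ or $[\emptyset,Y_0]\cap\mathcal{X}_0=\emptyset$; then apply Ramseyness of $\mathcal{X}_1$ inside $[\emptyset,Y_0]$ to obtain $Y\le Y_0$ (hence $Y\le X$) with $[\emptyset,Y]$ homogeneous for $\mathcal{X}_1$ as well. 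The key dictionary is that $[\emptyset,Y]\cap\mathcal{X}_i=\emptyset$ says exactly that no $Z\le Y$ has an approximation in $\mathcal{F}_i$, i.e.\ $\mathcal{F}_i\re Y=\emptyset$. So if either homogenization lands on the empty side, we are finished, taking that value of $i$ together with $Y$.

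The only remaining configuration is $[\emptyset,Y]\subseteq\mathcal{X}_0\cap\mathcal{X}_1$, and ruling this out is where the Nash-Williams hypothesis does the real work; this is the step I expect to be the crux. In that case $Y$ itself, which lies in $[\emptyset,Y]$ because $r_0(Y)=\emptyset$, belongs to both $\mathcal{X}_0$ and $\mathcal{X}_1$, so there are $m,n$ with $r_m(Y)\in\mathcal{F}_0$ and $r_n(Y)\in\mathcal{F}_1$. Because the partition is disjoint these two approximations are distinct, whence $m\ne n$ by $\mathbf{A.1}$(c); assuming without loss of generality $m<n$, we get $r_m(Y)\sqsubset r_n(Y)$ with both in $\mathcal{F}$, contradicting the Nash-Williams property that $\sqsubseteq$-comparable members of $\mathcal{F}$ are equal. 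Hence this configuration cannot occur, and the desired $Y$ and $i$ always exist. The only genuinely delicate points are arranging the second application of the Abstract Ellentuck Theorem inside $[\emptyset,Y_0]$ so that homogeneity for $\mathcal{X}_0$ is preserved, and checking that the antichain property is strong enough to forbid simultaneous membership in both $\mathcal{X}_i$; everything else is bookkeeping with the approximation maps.
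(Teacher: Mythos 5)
Your proof is correct and takes exactly the route the paper indicates for this theorem (which it cites from Todorcevic's book as a consequence of the Abstract Ellentuck Theorem): the sets $\mathcal{X}_i$ are metrically open, hence Ellentuck open, hence Ramsey, and the Nash-Williams property together with disjointness of $\mathcal{F}_0$ and $\mathcal{F}_1$ excludes the doubly positive case. The only (cosmetic) difference from the standard argument is that you homogenize twice; a single application to $\mathcal{X}_0$ suffices, since in the case $[\emptyset,Y]\subseteq \mathcal{X}_0$ any $s=r_n(Z)\in\mathcal{F}_1\re Y$ would be $\sqsubseteq$-comparable to some $r_m(Z)\in\mathcal{F}_0$, forcing $s\in\mathcal{F}_0\cap\mathcal{F}_1=\emptyset$.
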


	The  original Nash-Williams Theorem on $\om$ states that each  Nash-Williams subset $\mathcal{F}\sse[\om]^{<\om}$ is Ramsey \cite{NashWilliams65}.

In \cite{Goyo} Mijares introduced a generalization of the quasi-order  $\sse^*$  on $[\omega]^{\omega}$  to topological Ramsey spaces.

\begin{definition}\label{defn.Mijares*}
	For $X, Y \in \mathcal{R}$, write $X \leq ^{\ast} Y$ if there exists $a \in \mathcal{AR} \upharpoonright X $ such that $ [ a,X ] \subseteq [ a,Y ]$. In this case we say that $X$ is an $\emph{almost reduction} $ of $Y$.	
\end{definition}
Note that for each $a \in \mathcal{AR} \upharpoonright X $, there exists $Z \in \mathcal{R}$ such that $a \sqsubseteq Z$ and $Z \leq X$, so $ \emptyset \neq [ a,X ] \subseteq [ a,Y ]$.

\begin{remark}
For   the topological Ramsey spaces considered in this paper, $(\mathcal{R},\leq^{\ast})$ is a $\sigma$-closed partial order such that 
 $(\mathcal{R},\le)$  and  $(\mathcal{R},\leq^{\ast})$  have isomorphic separative quotients.
 Thus,   the two quasi-orders  are  interchangeable from the viewpoint of forcing. 
 However, in certain instances,  even coarser $\sigma$-closed quasi-orders will be used.
 Even so, these will still have separative quotients which are isomorphic to those of $(\mathcal{R},\le)$  and  $(\mathcal{R},\leq^{\ast})$. 
\end{remark}


\section{Several classes of  topological Ramsey spaces and their associated  ultrafilters}\label{sec:tRs&uf}

Given a topological Ramsey space $(\mathcal{R},\le,r)$,  the generic filter forced by $(\mathcal{R},\le)$ induces an ultrafilter as we now show:
In all known examples of topological Ramsey spaces,  the collection of first approximations, $\mathcal{AR}_1$, is a  countable set.
If that is not the case for  some particular space  $\mathcal{R}$, the 
restriction
$\mathcal{AR}_1\re A$ for any 
 member $A$ of $\mathcal{R}$ is  countable  by Axiom \bf A.2\rm, so one may work below a fixed member of $\mathcal{R}$, if necessary.
 
 \begin{definition}\label{defn.U_R}
Given   a generic filter $G\sse \mathcal{R}$  for the forcing $(\mathcal{R},\le)$,
define
\begin{equation}\label{eq.U_R}
\mathcal{U}_{\mathcal{R}}
=\{
S\sse\mathcal{AR}_1:   
S\supseteq \mathcal{AR}_1\re A\mathrm{\ for\ some\ }
A\in G\}.
\end{equation}
\end{definition}

\begin{lemma}\label{Rdegsame}
Let  $(\mathcal{R},\le,r)$ be a topological Ramsey space,
$\le^*$  be a $\sigma$-closed quasi-order  coarsening $\le$, and $G\sse\mathcal{R}$ be a generic filter 
for  $(\mathcal{R},\le^*)$.
Let  $\mathcal{U}_{\mathcal{R}}$  be the filter on base set $\mathcal{AR}_1$ defined  in (\ref{eq.U_R}).
Then $\mathcal{U}_{\mathcal{R}}$  is an ultrafilter on the base set $\mathcal{AR}_1$.
\end{lemma}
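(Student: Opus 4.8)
The plan is to verify that $\mathcal{U}_{\mathcal{R}}$ is a proper filter on $\mathcal{AR}_1$ which in addition contains, for every $S\sse\mathcal{AR}_1$, either $S$ or its complement; since such a family is exactly an ultrafilter, this suffices. Three facts will do the work. First, because $(\mathcal{R},\le^*)$ is $\sigma$-closed and $\mathcal{AR}_1$ is countable (passing below a fixed member of $\mathcal{R}$ if necessary), forcing with it adds no new subsets of $\mathcal{AR}_1$, so $\mathcal{P}(\mathcal{AR}_1)^{V[G]}=\mathcal{P}(\mathcal{AR}_1)^{V}$ and it is enough to treat each $S$ as a ground-model set. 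Second, the Abstract Nash--Williams Theorem (Theorem~\ref{NashWilliams}) produces, for each such $S$, a dense family of conditions that decide $S$. Third, the agreement of the separative quotients of $(\mathcal{R},\le)$ and $(\mathcal{R},\le^*)$ (the Remark following Definition~\ref{defn.Mijares*}) will let me pass from the coarse generic order $\le^*$ back to the fine order $\le$, along which the restriction map behaves monotonically.

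Upward closure of $\mathcal{U}_{\mathcal{R}}$ is immediate from (\ref{eq.U_R}), and it is proper: $G\ne\emptyset$ gives $\mathcal{AR}_1\in\mathcal{U}_{\mathcal{R}}$, while $r_1(A)\in\mathcal{AR}_1\re A$ shows each $\mathcal{AR}_1\re A$ is nonempty, so $\emptyset\notin\mathcal{U}_{\mathcal{R}}$. For the deciding property, fix $S\sse\mathcal{AR}_1$, which I may assume lies in $V$. Every element of $\mathcal{AR}_1$ has length $1$, so $\mathcal{AR}_1$ is a Nash--Williams family and $S\re Y=S\cap(\mathcal{AR}_1\re Y)$ for each $Y$. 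Applying Theorem~\ref{NashWilliams} to the partition $\mathcal{AR}_1=S\cup(\mathcal{AR}_1\setminus S)$ shows that the family of $Y\in\mathcal{R}$ with $\mathcal{AR}_1\re Y\sse S$ or $\mathcal{AR}_1\re Y\cap S=\emptyset$ is dense in $(\mathcal{R},\le)$, and hence dense in $(\mathcal{R},\le^*)$ since $\le$ refines $\le^*$. By genericity $G$ meets this family, and the witnessing member places $S$ or $\mathcal{AR}_1\setminus S$ into $\mathcal{U}_{\mathcal{R}}$.

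The crux is a descent step, which I will use both to forbid $S$ and $\mathcal{AR}_1\setminus S$ from belonging to $\mathcal{U}_{\mathcal{R}}$ at once and to obtain closure under intersection. I claim that $C\le^* A$ implies that $C$ and $A$ have a common lower bound $W\le C$, $W\le A$ in the fine order: for the Mijares order this is transparent, since $C\le^* A$ is witnessed by some $[a,C]\sse[a,A]$ with $[a,C]\ne\emptyset$ by $\mathbf{A.3}$, and any $W\in[a,C]$ works; in general it is the statement that $C$ and $A$ are $\le$-compatible, which follows from $C\le^* A$ through the separative-quotient isomorphism. Iterating the descent, any finitely many members of $G$ admit a common $\le$-lower bound $W$. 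Now suppose $A_0,A_1,D\in G$ witness $S_0,S_1\in\mathcal{U}_{\mathcal{R}}$ and $\mathcal{AR}_1\setminus(S_0\cap S_1)\in\mathcal{U}_{\mathcal{R}}$; descending to $W\le A_0,A_1,D$, the monotonicity $W\le A_i\Rightarrow\mathcal{AR}_1\re W\sse\mathcal{AR}_1\re A_i\sse S_i$ forces $\mathcal{AR}_1\re W\sse S_0\cap S_1$, while $W\le D$ forces $\mathcal{AR}_1\re W\sse\mathcal{AR}_1\setminus(S_0\cap S_1)$, whence $\mathcal{AR}_1\re W=\emptyset$, contradicting $r_1(W)\in\mathcal{AR}_1\re W$. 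Thus $S_0\cap S_1\in\mathcal{U}_{\mathcal{R}}$, and the identical argument applied to witnesses of $S$ and $\mathcal{AR}_1\setminus S$ shows $\mathcal{U}_{\mathcal{R}}$ contains no complementary pair. Combined with the deciding property, $\mathcal{U}_{\mathcal{R}}$ is an ultrafilter.

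I expect this descent step to be the only real obstacle, the remainder being bookkeeping. The tension is that $G$ is generic for the coarse order $\le^*$, so its members are guaranteed common lower bounds only in $\le^*$, whereas the monotonicity of $Y\mapsto\mathcal{AR}_1\re Y$ that powers the contradiction holds exactly only for $\le$ (for coarser orders it weakens to almost-inclusion). The separative-quotient agreement is precisely what bridges this gap, so I would isolate the compatibility claim ``$C\le^* A\Rightarrow C,A$ are $\le$-compatible'' as a short lemma and prove it once, either directly from the definition of $\le^*$ or from $\mathbf{A.1}$--$\mathbf{A.4}$ together with the Remark; that single verification is where care is needed.
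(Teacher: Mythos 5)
Your proposal is correct and takes essentially the same route as the paper: the paper's entire proof is the single sentence ``This follows from the Abstract Nash--Williams Theorem and genericity of $G$,'' and your argument is precisely a fleshed-out version of that --- Theorem~\ref{NashWilliams} applied to the Nash--Williams family $\mathcal{AR}_1$ yields the dense set of deciding conditions, genericity of $G$ supplies a member of it, and the remaining filter bookkeeping is handled by your descent step. Your isolation of the compatibility claim ``$C\le^* A$ implies $C,A$ are $\le$-compatible'' (immediate for the Mijares order, and in general exactly what the Remark's separative-quotient agreement provides) is the right way to fill the one detail the paper's hypothesis of an arbitrary $\sigma$-closed coarsening leaves implicit.
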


\begin{proof}
This  follows from the Abstract Nash-Williams Theorem and  genericity of $G$.
\end{proof}

This section introduces some topological Ramsey spaces and their associated ultrafilters whose Ramsey degrees, pseudointersection and tower numbers will be  investigated in subsequent sections.

\subsection{The topological Ramsey spaces $\mathcal{R}_{\al}$, $1\le \al<\om_1$}\label{subsec4.1}

In \cite{Laflamme}, Laflamme constructed a  forcing, denoted $\mathbb{P}_1$, which generates a weakly Ramsey ultrafilter, denoted $\mathcal{U}_1$, which is not Ramsey.
Although Blass had already shown such ultrafilters exist (see \cite{Blass74}),
the point of  $\mathbb{P}_1$ was to construct a weakly Ramsey ultrafilter with complete combinatorics,
 analagous to
 the result that 
  any Ramsey ultrafilter in the model $V[G]$ obtained by L\'{e}vy collapsing a Mahlo cardinal to $\aleph_1$ is $([\om]^{\om},\sse^*)$-generic over HOD$(\mathbb{R})^{V[G]}$
  (see \cite{Blass88} and 
   \cite{Mathias77}).
 One of the advantages of forcing  with topological Ramsey spaces  is that the associated ultrafilter automatically has complete combinatorics in the presence of large cardinals (see \cite{DipriscoMijares} for the result and \cite{Dobrinen/Seals} for an overview of this area).
 In  \cite{R1},  a topological Ramsey space denoted
 $\mathcal{R}_{1}$  
  was constructed 
  which forms a dense subset of Laflamme's forcing $\mathbb{P}_1$, hence generating the same weakly Ramsey ultrafilter.
  The motivation for that construction was to 
  find the exact  Tukey structure below $\mathcal{U}_1$ as well as the precise structure of the Rudin-Keisler classes  within these Tukey types, which were indeed  found in \cite{R1}.
Here, we reproduce a few  definitions and facts relevant to  this paper.

\begin{definition}[$(\mathcal{R}_{1}, \leq, r)$, \cite{R1}].
	Let $\mathbb{T}_1$ denote the following infinite tree of height 2.
	$$ \mathbb{T}_1= \{\langle \rangle\} \cup \{\langle n \rangle : n < \omega  \} \cup \bigcup_{n < \omega}\{\langle n,i \rangle : i \leq n \}.
	$$
	$\mathbb{T}_1$ can be thought of as an infinite sequence of finite trees of height 2, where the $n$-th \emph{subtree} of $\mathbb{T}_1$ is $$\mathbb{T}_1(n)= \{\langle \rangle, \langle n \rangle, \langle n,i \rangle : i \leq n \}.$$

	The members of $\mathcal{R}_{1}$ are infinite subtrees of $\mathbb{T}_1$ which have the same structure as $\mathbb{T}_1$.
	That is, a tree $X \subseteq \mathbb{T}_1$ is in $\mathcal{R}_{1}$ if and only if there is a strictly increasing sequence $(k_{n})_{n < \omega}$ such that
	\begin{enumerate}
		\item 
		$X \cap \mathbb{T}_1(k_{n}) \cong \mathbb{T}_1(n)$ for each $n < \omega$; and
		\item 
		whenever $X \cap \mathbb{T}_1(j) \neq \emptyset$, then $j = k_{n}$ for some $n < \omega$.
	\end{enumerate}
	When this holds, we let $X(n)$ denote $X \cap \mathbb{T}_1(k_{n})$, and call $X(n)$ the \emph{n-th subtree} of $X$. For $n < \omega$, $r_{n}(X)$ denotes $\bigcup_{i < n}X(i)$.

	For $X,Y \in \mathcal{R}_{1}$, define $Y \leq X$ if and only if there is
	 a strictly increasing sequence $(k_{n})_{n < \omega}$ such that for each $n$, $Y(n)$ is a subtree of $X(k_{n})$.
Notice that by the structure of the members of $\mathcal{R}_1$,
$Y\le X$ exactly when $Y\sse X$.
Given  $a,b \in \mathcal{AR}$,
define
	$b \leq_{\fin} a$ if and only if $b\sse a$.
\end{definition}

The following figure presents the first five ``blocks'' of the maximal member  of  $\mathcal{R}_1$.

\begin{figure}[H]
		\centering
		{\footnotesize
			\begin{tikzpicture}[scale=.7,grow'=up, level distance=40pt,sibling distance=.2cm]
			\tikzset{grow'=up}
			\Tree [.$\emptyset$ [.$\langle0\rangle$  [.$\langle 0,0\rangle$ ] ]
[.$\langle1\rangle$ [.$\langle1,0\rangle$ ][.$\langle1,1\rangle$ ] ] [.$\langle2\rangle$ [.$\langle2,0\rangle$ ] [.$\langle2,1\rangle$ ][.$\langle2,2\rangle$ ] ] [.$\langle3\rangle$ [.$\langle3,0\rangle$ ]  [.$\langle3,1\rangle$ ] [.$\langle3,2\rangle$ ] [.$\langle3,3\rangle$ ] ] [.$\langle4\rangle$ [.$\langle4,0\rangle$ ] [.$\langle4,1\rangle$ ] [.$\langle4,2\rangle$ ] [.$\langle4,3\rangle$ ][.$\langle4,4\rangle$ ] ]]
			\end{tikzpicture}}
		\caption{$r_{5}(\mathbb{T}_1)$}	
		\end{figure}
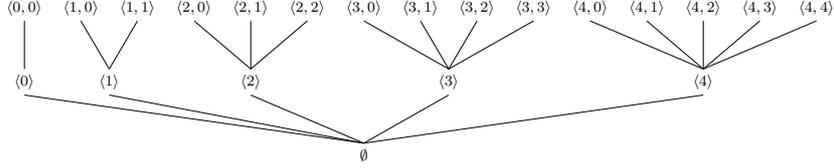

The members of $\mathcal{R}_1$ are subtrees of 
$\mathbb{T}_1$  which are isomorphic to $\mathbb{T}_1$.
As the first step toward the main theorem of
 \cite{R1}, the following was proved.

\begin{theorem}[Dobrinen and Todor\v{c}evi\'{c}, \cite{R1}]
	$(\mathcal{R}_{1}, \leq, r )$ is a topological Ramsey space.	
\end{theorem}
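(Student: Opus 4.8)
The plan is to invoke the Abstract Ellentuck Theorem stated above: since $\mathcal{AR}$ is countable and the members of $\mathcal{R}_1$ are subtrees of $\mathbb{T}_1$, it suffices to check that $(\mathcal{R}_1,\le,r)$ is closed as a subspace of $\mathcal{AR}^\om$ and satisfies the axioms $\mathbf{A.1}$--$\mathbf{A.4}$. Closedness is immediate: if $(a_n)_{n<\om}$ is a sequence in $\mathcal{AR}$ with $a_n\sqsubset a_{n+1}$ and $|a_n|=n$, then $X=\bigcup_n a_n$ is a subtree of $\mathbb{T}_1$ whose $n$-th block is isomorphic to $\mathbb{T}_1(n)$, so $X\in\mathcal{R}_1$ with $r_n(X)=a_n$; hence any convergent sequence of approximations has its limit in $\mathcal{R}_1$.

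Axiom $\mathbf{A.1}$ follows directly from the definition $r_n(X)=\bigcup_{i<n}X(i)$: (a) holds since $r_0(X)=\emptyset$; (b) holds because a member of $\mathcal{R}_1$ is the union of its blocks and so is determined by its approximations; and (c) holds because $|r_n(X)|$ records the number of blocks, so $r_n(X)=r_m(Y)$ forces $n=m$ and agreement of all earlier approximations. For $\mathbf{A.2}$, recall that $\leq_{\fin}$ is simply $\sse$ on $\mathcal{AR}$. Part (a) holds since a finite tree $b$ has only finitely many subtrees that are themselves approximations; part (b) is the unwinding of the observation, already noted in the definition, that $Y\le X$ is equivalent to $Y\sse X$, using that each $r_n(A)$ is finite and hence contained in some $r_m(B)$ once $A\sse B$; and part (c) is verified by taking $d$ to be the shortest initial segment of $c$ whose blocks contain all the blocks of $a$, which exists because $a\sqsubset b\sse c$ forces each block of $a$ to lie inside a block of $c$.

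Axiom $\mathbf{A.3}$ is structural. For (a), if $\depth_B(a)=n<\infty$ then every $A\in[r_n(B),B]$ satisfies $r_n(A)=r_n(B)$ and hence $a\leq_{\fin}r_n(A)$, so the blocks of $a$ are selectable from the first $n$ blocks of $A$ and can be continued to produce $C\le A$ with $a\sqsubset C$, giving $[a,A]\ne\emptyset$; for (b) one amalgamates, passing from a witness in $[a,A]$ to an $A'\in[\depth_B(a),B]$ by keeping the blocks of $A$ up to the depth of $a$ and filling in the blocks above from the witness, which is possible since every sufficiently large block of $B$ contains subtrees isomorphic to $\mathbb{T}_1(m)$ for each prescribed size $m$.

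The main obstacle is the pigeonhole axiom $\mathbf{A.4}$, where the combinatorics of $\mathbb{T}_1$ enter. Fix $a$ with $\depth_B(a)<\infty$ and a partition $\mathcal{O}\cup\mathcal{O}^c$ of $\mathcal{AR}_{|a|+1}$. A one-step extension of $a$ inside $[a,B]$ adjoins a single new block isomorphic to $\mathbb{T}_1(|a|)$, i.e.\ it chooses $|a|+1$ of the available leaves sitting above the middle node of one block of $B$; the partition thereby colors, within each block of $B$, all of its $(|a|+1)$-element leaf-subsets. The plan is first to apply the finite Ramsey theorem inside each block: since the blocks of $B$ have unboundedly many leaves, each sufficiently large block can be shrunk to a monochromatic sub-block, yielding $B'\le B$ with $r_{\depth_B(a)}(B')=r_{\depth_B(a)}(B)$ each of whose blocks carries a well-defined color in $\{0,1\}$. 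The point that makes the argument close is that shrinking a monochromatic block to a smaller sub-block preserves both monochromaticity and color, since the leaf-subsets of the sub-block are among those of the original block. Applying the infinite pigeonhole principle to the block-colors of $B'$ produces a single color $\epsilon$ realized by infinitely many blocks; selecting those blocks and shrinking them to the sizes required by the $\mathcal{R}_1$-structure above $\depth_B(a)$, which is possible because the selected blocks have unboundedly large size, yields $A\in[\depth_B(a),B]$ all of whose one-step extensions have color $\epsilon$. Thus $r_{|a|+1}[a,A]\sse\mathcal{O}$ or $r_{|a|+1}[a,A]\sse\mathcal{O}^c$, establishing $\mathbf{A.4}$ and completing the verification.
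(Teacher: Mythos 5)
Your proposal is correct and takes essentially the same route as the proof in \cite{R1} (which this paper cites rather than reproves): show $\mathcal{R}_1$ is closed in $\mathcal{AR}^{\om}$ and verify $\mathbf{A.1}$--$\mathbf{A.4}$, with the pigeonhole $\mathbf{A.4}$ obtained by applying the finite Ramsey theorem to the leaf-sets of individual blocks and then the infinite pigeonhole to the resulting block colors, exactly as you do. One detail to fix: in $\mathbf{A.3}$(b) the amalgamated $A'$ must begin with $r_{\depth_B(a)}(B)$ --- the first $\depth_B(a)$ blocks of $B$, not of $A$ --- since $A'\in[\depth_B(a),B]$ requires $r_{\depth_B(a)}(A')=r_{\depth_B(a)}(B)$; with this substitution your argument goes through, because by minimality of $\depth_B(a)$ the last block of $a$ lies in $B(\depth_B(a)-1)$, so extensions of $a$ can only use the tail blocks you take from the witness.
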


Notice that by the structure of the members of $\mathcal{R}_1$,
given $X,Y\in\mathcal{R}_1$,
$Y\le^* X$ (recall Definition \ref{defn.Mijares*})
holds
 if and only if there is an $i<\om$
and a strictly increasing sequence $(k_n)_{n\ge i}$ such that for each $n\ge i$,  $Y(n)\sse X(k_n)$.
Thus,  the quasi-order $\le^*$ 
turns out to be equivalent to  $\sse^*$, since 
$Y\le^* X$ if and only if $Y\sse^* X$.
By an  {\em ultrafilter $\mathcal{U}_{\mathcal{R}_1}$ associated with the forcing $(\mathcal{R}_1,\le^*)$}  we mean the
ultrafilter   on base set $\mathcal{AR}_1$
generated by 
 the sets $\mathcal{AR}_1\re X$, $X\in G$, where $G$ is some 
  generic filter for  $(\mathcal{R}_1,\le^*)$.
 By the density of this topological Ramsey space in Laflamme's forcing,
this ultrafilter $\mathcal{U}_{\mathcal{R}_1}$ is isomorphic to
the ultrafilter $\mathcal{U}_1$ generic for Laflamme's forcing $\mathbb{P}_1$.
Hence, it is weakly Ramsey but not Ramsey.

Continuing in this vein,
 Laflamme constructed  a hierarchy of forcings $\mathbb{P}_{\al}$, $1\le \al<\om_1$,  in order to produce rapid p-points   $\mathcal{U}_{\al}$ satisfying partition relations with decreasing strength as $\al$ increases, and such that for $\beta <\al$, $\mathcal{U}_{\beta}$ is Rudin-Keisler below $\mathcal{U}_{\al}$.
In \cite{Laflamme},
Laflamme proved  
that  each $\mathcal{U}_{\al}$  has complete combinatorics, and 
that below $\mathcal{U}_{\al}$, there is   a decreasing chain of length $\al+1$  of Rudin-Keisler types, the least one being that of  a Ramsey ultrafilter.
This left open, though, whether or not this chain is the only Rudin-Keisler structure below  $\mathcal{U}_{\al}$.

Topological  Ramsey spaces 
$\mathcal{R}_{\al}$ were  constructed in \cite{Rn} to produce  dense subsets of Laflamme's forcings $\mathbb{P}_{\al}$, hence  generating  the same generic ultrafilters.
The reader is referred to \cite{Rn} for the definition of these spaces. 
The Ramsey space techniques provided valuable methods  for proving  in \cite{Rn} that indeed  the Rudin-Keisler, and moreover, the  Tukey structure below $\mathcal{U}_{\al}$ is exactly a chain of length $\al+1$.
Here, we reproduce $\mathcal{R}_2$, with a minor modification  not affecting its forcing properties  which will make it easier to understand.  
The reader can then infer the structure of $\mathcal{R}_k$ for each $1\le k<\om$.
In Section \ref{sec:degrees}, we will only work with $\mathcal{R}_k$ for $1\le k<\om$, since the Ramsey degree  $t(\mathcal{U}_{\om},2)=\om$. 
However, Section 
\ref{sec:Cardinals}
 will consider  pseudointersection and tower numbers of $\mathcal{R}_{\al}$, for all $1\le \al<\om_1$.


\subsection{Ramsey spaces from \Fraisse\ classes}\label{subsec.Fraisse}

This subsection introduces topological Ramsey spaces  constructed   in \cite{fraisseClasses}.
The motivation for these spaces was to  find dense subsets of some forcings of Blass in \cite{Blass73} and of Baumgartner and Taylor in \cite{Baumgartner/Taylor78} 
in order to better study  properties of their forced  ultrafilters (more details provided below).
The construction was seen to easily generalize to any \Fraisse\ classes with the Ramsey property. 
We provide the  basic ideas of the  construction here.

\begin{definition}[The space $\mathcal{R}(\mathbb{A})$, \cite{fraisseClasses}]
Fix some natural number $J\ge 1$, and for each 
$j< J$, let $\mathcal{K}_j$ be a \Fraisse\ class of finite linearly ordered relational structures with the Ramsey property. 
We say that 
 $\mathbb{A}=\lgl (\mathbf{A}_{k,j})_{k<\om}:j<J\rgl$
 is a {\em generating sequence} if for each $j<J$, the following hold:
 \begin{enumerate}
 \item
For each $k<\om$, $\mathbf{A}_{k,j}$  is a member of $\mathcal{K}_j$,
 and  $\mathbf{A}_{0,j}$ has universe of cardinality $1$.
\item
Each $\mathbf{A}_{k,j}$ is a substructure of 
$\mathbf{A}_{k+1,j}$.
\item
For each structure $\mathbf{B}\in\mathcal{K}_j$, there is a $k$ such that $\mathbf{B}$ embeds into $\mathbf{A}_{k,j}$.
\item
For each pair $k<m<\om$, there is an $n>m$ large enough that the following Ramsey property holds:
$$
\mathbf{A}_{n,j}\rightarrow (\mathbf{A}_{m,j})^{\mathbf{A}_{k,j}}.
$$
\end{enumerate}
 Let $\mathbf{A}_{k}$ denote the
 $n$-tuple of structures $(\mathbf{A}_{k,j})_{j<J}$.
 It can be convenient to think of this as the 
  product $\prod_{j< J} \mathbf{A}_{k,j}$ with no additional relations.
 Let $\mathbb{A}= \langle \langle k, \mathbf{A}_{k} \rangle : k < \omega \rangle $. 
 This infinite sequence  $\mathbb{A}$  of $J$-tuples of finite structures
is the maximal member of 
the space $\mathcal{R}(\mathbb{A})$.
 We define $B$ to be a member of $\mathcal{R}(\mathbb{A})$ if and only if $B = \langle \langle n_{k}, \mathbf{B}_{k} \rangle : k < \omega \rangle$, where
	\begin{enumerate}
		\item $(n_{k})_{k < \omega}$ is some strictly increasing sequence of natural numbers; and
		\item for each $k < \omega$, $\mathbf{B}_{k}$ is  an  $J$-tuple  $(\mathbf{B}_{k,j})_{j<J}$,
		where each 
	$ \mathbf{B}_{k,j}$ is a substructure of 
	$\mathbf{A}_{n_{k},j}$
	isomorphic to $\mathbf{A}_{k,j}$.
	\end{enumerate}
	We use $B(k)$ to denote $\langle n_{k}, \mathbf{B}_{k} \rangle$, the $k$-th block of $B$. The $m$-th approximation of $B$ is $r_{m}(B)=\langle B(0),...,B(m-1)\rangle$.

	Define the partial order $\leq$ as follows:   For $B= \langle \langle m_{k}, \mathbf{B}_{k} \rangle : k < \omega \rangle$ and $C= \langle \langle n_{k}, \mathbf{C}_{k} \rangle : k < \omega \rangle$, define $C \leq B$ if and only if for each $k$ there is an $l_{k}$ such that $n_{k} = m_{l_{k}}$ and for all $j <J$, $ \mathbf{C}_{k,j}$ is a substructure of  $\mathbf{B}_{l_{k},j} $.
The partial order $\leq_{\fin}$ on the collection of finite approximations, $\mathcal{AR}$, is defined  as follows: 
For $b= \langle \langle m_{k}, \mathbf{B}_{k} \rangle : k < p \rangle \rangle $ and $c= \langle \langle n_{k}, \mathbf{C}_{k} \rangle : k < q \rangle$, where $p,q<\om$, define $c \leq_{\fin} b$ if and only if there are $C \leq B$ such that $c= r_{q}(C)$, $b=r_{p}(B)$.
For these spaces, the naturally associated $\sigma$-closed partial order $\le^*$  from Definition \ref{defn.Mijares*} is simply $\sse^*$.
\end{definition}

\begin{theorem}[Dobrinen, Mijares and Trujillo,  \cite{fraisseClasses}]
Given a generating sequence $\lgl (\mathbf{A}_{k,j})_{k<\om}:j<J\rgl$,
the triple  $(\mathcal{R}(\mathbb{A}), \le r)$ forms a topological Ramsey space. 
\end{theorem}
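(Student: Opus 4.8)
The plan is to verify the hypotheses of the Abstract Ellentuck Theorem: I would show that $\mathcal{R}(\mathbb{A})$ is closed as a subspace of $\mathcal{AR}^{\om}$ and that the triple $(\mathcal{R}(\mathbb{A}),\le,r)$ satisfies axioms $\mathbf{A.1}$ through $\mathbf{A.4}$. That theorem then delivers that every set with the Baire property is Ramsey and every meager set is Ramsey null, which is exactly the assertion. Of these requirements, closedness and $\mathbf{A.1}$--$\mathbf{A.3}$ are essentially bookkeeping that follows from the way approximations are defined as initial segments of block sequences, while $\mathbf{A.4}$, the pigeonhole axiom, is where the Ramsey property of the \Fraisse\ classes genuinely enters and is the main obstacle.

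For closedness, I would observe that membership in $\mathcal{R}(\mathbb{A})$ is determined blockwise: a coherent sequence $a_0 \sqsubset a_1 \sqsubset \cdots$ of approximations with $|a_n|=n$ has a unique limit $B$ in $\mathcal{AR}^{\om}$, and the defining conditions---that the block indices $n_k$ are strictly increasing and that each $\mathbf{B}_{k,j}$ is a copy of $\mathbf{A}_{k,j}$ sitting inside $\mathbf{A}_{n_k,j}$---are each witnessed by a single block, hence preserved in the limit, so $B\in\mathcal{R}(\mathbb{A})$. For $\mathbf{A.1}$, parts (a)--(c) are immediate from $r_m(B)=\lgl B(0),\dots,B(m-1)\rgl$: the length of an approximation equals its index, so equal approximations have equal length and agree on all earlier blocks. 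For $\mathbf{A.2}$, the key point is (a): fixing a finite approximation $b$, any $a \leq_{\fin} b$ chooses substructures lying inside the finitely many finite structures appearing in the blocks of $b$; since each $\mathbf{A}_{k,j}$ is finite, there are only finitely many such choices, so $\{a : a \leq_{\fin} b\}$ is finite. Parts (b) and (c) unwind the definition of $\le$ in terms of $\leq_{\fin}$ and amount to checking coherence of block matchings. For $\mathbf{A.3}$, I would use the nesting property (2) and the universality property (3) of the generating sequence to extend any $a$ of finite depth in $B$ to a full member lying below a prescribed $A \in [\depth_B(a),B]$, and to localize such extensions; these are finite constructions choosing one block at a time.

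The heart of the proof is $\mathbf{A.4}$. Here a one-block extension of $a$ records, for each $j<J$, a choice of a copy of $\mathbf{A}_{|a|,j}$ inside a later block of $A$, so a $2$-coloring $\mathcal{O}$ of $\mathcal{AR}_{|a|+1}$ becomes a coloring of $J$-tuples of such copies. The plan is to homogenize one coordinate $j$ at a time: using the generating sequence Ramsey property (4), I would pass to a block $\mathbf{A}_{n,j}$ large enough that $\mathbf{A}_{n,j}\ra(\mathbf{A}_{m,j})^{\mathbf{A}_{|a|,j}}$ for a suitable $m$, and inside it extract a copy of $\mathbf{A}_{m,j}$ on which the induced coloring of copies of $\mathbf{A}_{|a|,j}$ is constant; iterating across $j<J$ handles the product. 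Because the new block in an extension may be drawn from any of the infinitely many later blocks of $A$, this finite homogenization within blocks must be combined with an infinitary pigeonhole: after homogenizing each candidate block to a single color, I select an infinite subsequence of blocks sharing one color and let $A'$ consist of these thinned blocks. Then every one-block extension of $a$ using $A'$ receives the same color, giving $r_{|a|+1}[a,A']\subseteq\mathcal{O}$ or $r_{|a|+1}[a,A']\subseteq\mathcal{O}^c$.

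I expect the delicate step to be managing the interaction between the product dimension (over $j<J$) and the infinite dimension (over blocks) of the pigeonhole simultaneously, since the homogenizations in different coordinates and the passage to an infinite monochromatic set of blocks must be arranged so as not to destroy one another. The \Fraisse\ Ramsey property, packaged through condition (4) of the generating sequence, is precisely what makes each coordinatewise, blockwise homogenization possible, and the finiteness of the structures $\mathbf{A}_{k,j}$ is what keeps the number of colors on each homogenized block finite, so that the final infinite pigeonhole goes through.
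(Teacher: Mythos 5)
The paper under review does not actually prove this theorem---it is quoted from \cite{fraisseClasses}---but your proposal follows exactly the route taken in that cited source: show $\mathcal{R}(\mathbb{A})$ is closed in $\mathcal{AR}^{\omega}$, verify $\mathbf{A.1}$--$\mathbf{A.4}$, and invoke the Abstract Ellentuck Theorem, with the pigeonhole axiom $\mathbf{A.4}$ obtained precisely as you describe, by homogenizing each block via the \Fraisse\ Ramsey property (4) and then applying an infinite pigeonhole across blocks. The one step you flag as delicate, the product over $j<J$, is handled there by induction on the number of coordinates with the quantifiers ordered so that the block size serving the coordinates to be homogenized later is fixed \emph{before} counting the (finitely many) colors needed for the coordinate homogenized first, which removes the apparent circularity between the number of colors and the size of the block.
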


Letting  $\mathcal{R}$ denote $\mathcal{R}(\mathbb{A})$, 
given a generic filter $G$ for the forcing  $(\mathcal{R},\le^*)$,
we  let $\mathcal{U}_{\mathcal{R}}$
denote the ultrafilter on base set $\mathcal{AR}_1$ generated by the sets $\mathcal{AR}_1\re X$, $X\in G$.
The motivation for these spaces
came from studying the Tukey types below ultrafilters 
constructed in   \cite{Blass73}  and \cite{Baumgartner/Taylor78}.
The special case where $n=2$ and both $\mathcal{K}_0$ and $\mathcal{K}_1$ are the classes of finite linear orders produces a Ramsey space which is dense inside the {\em $n$-square forcing} of Blass in \cite{Blass73}, which he constructed to produce a p-point which has  two Rudin-Keisler incomparable selective ultrafilters Rudin-Keisler  below it. 
Given $n\ge 2$, we shall let $\mathcal{H}^n$ denote the Ramsey space produced when each $\mathcal{K}_j$, $j<n$, is the class of finite linear orders;  call this space the {\em $n$-hypercube space}.
The space $\mathcal{H}^2$ is dense in Blass' forcing, and hence  the ultrafilter $\mathcal{U}_{\mathcal{H}^2}$ is isomorphic to the one constructed by Blass.
The collection of spaces $\mathcal{H}^n$, $n\ \ge 2$, form  a hierarchy of forcings such that each ultrafilter $\mathcal{U}_{\mathcal{H}^n}$ projects to the ultrafilter $\mathcal{U}_{\mathcal{H}^m}$ for $m<n$.
It is shown in \cite{fraisseClasses} that the initial Tukey structure below $\mathcal{U}_{\mathcal{H}^n}$  is isomorphic to the Boolean algebra $\mathcal{P}(n)$.
In another direction, the special cases where $J=1$, $k\ge 3$ is fixed,  and $\mathcal{K}_0$ is the class of all finite ordered $k$-clique-free graphs  produces Ramsey spaces which are dense inside partial orders constructed by Baumgartner and Taylor in
\cite{Baumgartner/Taylor78} which produce p-points which have asymmetric partition relations, called $k$-arrow ultrafilters. 
Results on the  initial Rudin-Keisler and  Tukey  structures  of ultrafilters constructed by Ramsey spaces  from generating sequences
 appear in \cite{fraisseClasses}, which includes some work of Trujillo in his 
 thesis \cite{TrujilloThesis}.


	\subsection{High dimensional Ellentuck spaces}\label{subsec.hdE}

The next topological Ramsey spaces we  present are the high dimensional Ellentuck spaces. 
We shall let $\mathcal{E}_{1}$ denote the Ellentuck space; that is $([\om]^{\om},\sse,r)$, where for $X\in[\om]^{\om}$ and  $n<\om$,  $r_n(X)=\{x_i:i<n\}$ where $\{x_i:i<\om\}$ is the increasing enumeration of $X$. 
The first new space, $\mathcal{E}_{2}$, was motivated by a  problem left open in \cite{Blass/Dobrinen}: 
 finding the precise structure  of the ultrafilters Tukey reducible to the generic ultrafilter forced by $\mathcal{P}(\omega^{2})/ \Fin^{\otimes 2}$, denoted by  $\mathcal{G}_{2}$.
 Here, $\Fin^{\otimes 2}$ is the ideal of all subsets  $I\sse \om\times\om$  for which all but finitely
  many fibers are finite; that is, for all but finitely many $n$, the set $\{i<\om: (n,i)\in I\}$ is finite.
 This ultrafilter $\mathcal{G}_2$ is not a p-point, but still satisfies the partition relation $\mathcal{G}_2\ra(\mathcal{G}_2)^2_{l,4}$, which is the best partition relation a non-p-point can possess.
Similarly to the construction of  $\Fin^{\otimes 2}$, 
ideals $\Fin^{\otimes k+1}$ can be recursively defined to consist of those subsets  $X$ of $\om^{k+1}$ such that for all but finitely many $n$, the collection $\{(j_1,\dots,j_k): (n,j_1,\dots,j_k)\in X\}$ is a member of $\Fin^{\otimes k}$.
Each ideal $\Fin^{\otimes k}$ is $\sigma$-closed under the partial order $\sse^{\Fin^{\otimes k}}$, and the Boolean algebras $\mathcal{P}(\omega^{k})/ \Fin^{\otimes k}$, $k\ge 2$,  force   ultrafilters,  denoted  $\mathcal{G}_{k}$, which form a hierarchy in the sense that $\mathcal{G}_j$ is recovered as the projection of $\mathcal{G}_k$ to  its first $j$ coordinates, for  $ j<k$.
We let $\mathcal{G}_1$ denote the ultrafilter forced by $\mathcal{P}(\om)/\Fin$;
this is 
 a Ramsey  ultrafilter.

High dimensional Ellentuck spaces   $\mathcal{E}_k$  were constructed by the first author in order to  form topological Ramsey spaces which are forcing equivalent to the Boolean algberas $\mathcal{P}(\om^k)/\Fin^{\otimes k}$.
This was utilized 
 to 
find   the exact Rudin-Keisler  and Tukey structures below each $\mathcal{G}_k$;
 these turn out to be exactly chains of length $k$ (see   \cite{highDimensional}). 
Having already proved their forcing equivalence 
 to the Boolean algebras $\mathcal{P}(\omega^{k})/ \Fin^{\otimes k}$  in \cite{highDimensional}, 
 we provide here the simpler sequence version of these  spaces:
 These are the ``domain spaces'' in \cite{DobrinenJML16}, 
  and this formulation can be found also in 
 \cite{highDimensional2}.

\begin{definition} \label{defOmle}
For $k \ge 2$, denote by $\omle{k}$ the collection of  all non-decreasing sequences of members
of $\omega$ of length less than or equal to $k$.
\end{definition}

The lexicographic order on $\omle{k}$ is defined as usual; it is presented here to aid the reader. 

\begin{definition}[The lexicographic order on $\omle{k}$] \label{defLex}
Let $(s_0,\dots,s_i),(t_0,\dots,t_j)\in\omle{k}$ be given.
We say that
$(s_0,\dots,s_i)$ is {\em lexicographically below} $(t_0,\dots,t_j)$, written $(s_0,\dots,s_i) \lex
(t_0,\dots,t_j)$, if and only if there is a non-negative integer $m$ with the following properties:
\begin{enumerate}
\item[(i)]   $m\le\min(i, j)$;
\item[(ii)]  for each  $n <m, s_n = t_n$; and
\item[(iii)] either $s_{m} < t_{m}$, or else $m = i< j$ and $s_i=t_i$.
\end{enumerate}
\end{definition}

Whereas for $k\ge 2$, $<_{\mathrm{lex}}$ has order type greater than $\om$, the next well-order has order-type $\om$ for any $k\ge 2$.
This  will aid in defining the finite approximations of members of $\mathcal{E}_k$, and is central to seeing how each $\mathcal{E}_k$ can be obtained as a projection of 
$\mathcal{E}_{k+1}$.

\begin{definition}[The well-ordered set $(\omle{k},\prec)$] \label{defPrec}
Set the empty sequence $()$ to be the $\prec$-minimum element of $\omle{k}$; so, for all nonempty
sequences $s$ in $\omle{k}$, we have $() \prec s$. In general, given  $(s_0,\dots,s_i)$ and
$(t_0,\dots,t_j)$ in $\omle{k}$ with $i,j \ge 1$, define $(s_0,\dots,s_i) \prec (t_0,\dots,t_j)$
if and only if either
\begin{enumerate}
\item
$s_i < t_j$, or
\item
$s_i = t_j$ and
$(s_0,\dots,s_i) <_{\mathrm{lex}} (t_0,\dots,t_j)$.
\end{enumerate}
\end{definition}

\noindent \textbf{Notation.}
Let $\ome{k}$ denote the collection of all non-decreasing sequences of length $k$ of members of
$\omega$, and 
notice  that the $\prec$ also   well-orders $\ome{k}$ in order type $\omega$.
 Let 
$\vec{u}_n$ denote the $n$-th member of $(\ome{k},\prec)$.
For $s,t\in \omle{k}$,
we say that $s$ is  a {\em  proper initial segment} of $t$ and write $s\subset t$ if
$s=(s_0,\dots,s_i)$, $t=(t_0,\dots,t_j)$,
$i<j$,
and for all $m\le i$, $s_m=t_m$.

\begin{definition}[The spaces $(\mathcal{E}_k,\le,r)$, $2\le k<\om$, \cite{highDimensional}] \label{defE_k}
An {\em $\mathcal{E}_k$-tree}
 is a function   $\widehat{X}$ from $\omle{k}$ into $\omle{k}$ that preserves the well-order $\prec$  and proper  initial segments $\subset$.
For $\widehat{X}$ an $\mathcal{E}_k$-tree, let $X$ denote the restriction of $\widehat{X}$ to
$\ome{k}$.
The space $\mathcal{E}_k$  is defined to be the collection of all $X$, where   $\widehat{X}$
is an $\mathcal{E}_k$-tree.
We identify $X$ with its range, which is a subset of $\ome{k}$,
 and usually will write $X = \set{x_0,x_1,\ldots}$, where
$x_0 = X(\vec{u}_0) \prec x_1 = X(\vec{u}_1) \prec \cdots$.
The partial ordering on $\mathcal{E}_k$ is defined to be simply inclusion; that is, given $X,Y\in \mathcal{E}_k$,
$X\le Y$ if and only if  (the range of) $X$ is a subset of  (the range of) $Y$.
For each $n<\omega$, the $n$-th restriction function $r_n$ on $\mathcal{E}_k$  is defined
 by
 $r_n(X) = \set{x_i:i<n}$
that is,
the  $\prec$-least $n$  members of
$X$.
We set
\begin{equation}
\mathcal{AE}_n^k := \{ r_n(X) : X \in \mathcal{E}_k \}
\hspace{1.1cm} \textnormal{and} \hspace{1.1cm}
\mathcal{AE}^k := \{ r_n(X) : n < \omega, X \in \mathcal{E}_k \}
\end{equation}
to denote the set of all
$n$-th approximations to members of $\mathcal{E}_k$,
and the set of all
finite approximations to members of $\mathcal{E}_k$, respectively.
\end{definition}


\begin{theorem}[Dobrinen, \cite{highDimensional}]
	For each $2 \leq k < \omega$, $(\mathcal{E}_{k}, \leq, r)$ is a topological Ramsey space.
	Moreover, 
	$(\mathcal{E}_{k},\subseteq^{\Fin^{\otimes k}})$ is forcing equivalent to $\mathcal{P}(\omega^{k}) /\Fin^{\otimes k}$.
\end{theorem}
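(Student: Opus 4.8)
The plan is to derive the first assertion from the Abstract Ellentuck Theorem, for which it suffices to verify that $\mathcal{E}_{k}$ is closed as a subspace of $(\mathcal{AE}^{k})^{\om}$ and that $(\mathcal{E}_{k},\le,r)$ satisfies axioms \textbf{A.1}--\textbf{A.4}; the second assertion will then follow by exhibiting a density-preserving correspondence between $(\mathcal{E}_{k},\sse^{\Fin^{\otimes k}})$ and $\mathcal{P}(\om^{k})/\Fin^{\otimes k}$ witnessing that the two orders have isomorphic separative quotients.

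First I would dispose of the routine axioms. Closure holds because a $\sqsubseteq$-coherent sequence of finite approximations determines a unique $\prec$- and $\subset$-preserving function on $\omle{k}$, which is again an $\mathcal{E}_{k}$-tree. For \textbf{A.1}, part (a) is immediate from $r_{0}(X)=\emptyset$, while (b) and (c) hold because each $r_{n}(X)$ is the set of $\prec$-least $n$ elements of $X$, so the approximations of a single member cohere and distinct members separate at a finite stage. For \textbf{A.2} I take $\leqf$ to be the restriction to $\mathcal{AE}^{k}$ of the reduction induced by inclusion; condition (a) holds because $\prec$ has order type $\om$ on $\ome{k}$, so every sequence has only finitely many $\prec$-predecessors and hence each $b$ admits only finitely many $\leqf$-reducts, and conditions (b) and (c) are direct transcriptions of the definitions of $\le$ and of $\subset$. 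Axiom \textbf{A.3} follows from the fact that any $a$ with $\depth_{B}(a)<\infty$ can be end-extended to a full member inside $[\depth_{B}(a),B]$, since $B$, being an $\mathcal{E}_{k}$-tree, has arbitrarily large branching at every level.

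The heart of the argument, and the step I expect to be hardest, is the pigeonhole \textbf{A.4}: given $\depth_{B}(a)<\infty$ and $\mathcal{O}\sse\mathcal{AE}^{k}_{|a|+1}$, one must produce $A\in[\depth_{B}(a),B]$ with $r_{|a|+1}[a,A]$ contained entirely in $\mathcal{O}$ or in $\mathcal{O}^{c}$. The subtlety is that the next element $X(\vec{u}_{|a|})$ cannot be chosen freely: the proper initial segments of $\vec{u}_{|a|}$ are $\prec$-earlier, so their images are already pinned down by $a$ through the requirement that $\widehat{X}$ preserve $\subset$, and the admissible one-step extensions therefore form a constrained family rather than an arbitrary tail. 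I would prove this by induction on $k$. The base case $k=1$ is exactly the infinite pigeonhole principle underlying the Ellentuck space. For the inductive step I would stratify the candidate extensions by the value of the final coordinate and by the pattern of agreement with the already-placed tree, observe that each stratum is governed by a lower-dimensional instance, apply the inductive hypothesis, and then fuse the resulting thinnings along the blocks of $B$ into a single $A$. The delicate bookkeeping is ensuring that the fused object is still a genuine $\mathcal{E}_{k}$-tree, i.e.\ that at each finite stage enough branching survives to continue the construction; here the order type $\om$ of $\prec$ is essential, as it guarantees that only finitely many constraints are active at each step. With closure and \textbf{A.1}--\textbf{A.4} in hand, the Abstract Ellentuck Theorem yields that $(\mathcal{E}_{k},\le,r)$ is a topological Ramsey space.

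For the forcing equivalence I would fix an identification of $\ome{k}$ with a $\Fin^{\otimes k}$-positive subset of $\om^{k}$ respecting the iterated fiber decomposition, and then show that, modulo $\Fin^{\otimes k}$, the members of $\mathcal{E}_{k}$ form a dense subset of the $\Fin^{\otimes k}$-positive sets. Every $X\in\mathcal{E}_{k}$ is $\Fin^{\otimes k}$-positive, since being an $\mathcal{E}_{k}$-tree keeps infinitely many fibers infinite at each coordinate level. For density, I would show that every $\Fin^{\otimes k}$-positive set $P$ contains the range of some $\mathcal{E}_{k}$-tree: a $\Fin^{\otimes k}$-positive set has infinitely many first-coordinate sections that are $\Fin^{\otimes(k-1)}$-positive, so one can recursively apply the $(k-1)$-dimensional construction inside these sections and thread a $\prec$-increasing enumeration through $P$. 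This recursive construction of a tree inside an arbitrary positive set is the main obstacle of the second half. Finally, verifying that $X\sse^{\Fin^{\otimes k}}Y$ corresponds to almost-inclusion of the associated subsets of $\om^{k}$ shows that the two orders induce the same separative quotient, whence $(\mathcal{E}_{k},\sse^{\Fin^{\otimes k}})$ and $\mathcal{P}(\om^{k})/\Fin^{\otimes k}$ are forcing equivalent.
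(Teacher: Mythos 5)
The paper itself states this theorem as an imported result from \cite{highDimensional} and contains no proof of it, so the only meaningful comparison is with the cited source: your outline follows that source's strategy essentially verbatim. Verifying closure in $(\mathcal{AE}^{k})^{\omega}$ and axioms $\mathbf{A.1}$--$\mathbf{A.4}$ (with $\mathbf{A.4}$ handled by iterated applications of the classical pigeonhole fused along the fibers, the other axioms being routine) and then invoking the Abstract Ellentuck Theorem, followed by obtaining the forcing equivalence by identifying $\ome{k}$ with a $\Fin^{\otimes k}$-positive subset of $\om^{k}$ and showing that members of $\mathcal{E}_{k}$ are $\Fin^{\otimes k}$-positive and dense in the positive sets, is precisely how the result is proved there, and your identification of $\mathbf{A.4}$ and the density construction as the two non-routine steps is accurate.
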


In  the notation of this paper,  
given a generic filter $G$ for $(\mathcal{E}_{k},\subseteq^{\Fin^{\otimes k}})$, let 
$\mathcal{U}_{\mathcal{E}_k}$  denote the ultrafilter on base set $\mathcal{AE}^k_1$ generated by the sets $\mathcal{AE}_1^k\re X$, $X\in G$. 
Since $(\mathcal{E}_{k},\subseteq^{\Fin^{\otimes k}})$ and $\mathcal{P}(\omega^{k}) / \Fin^{\otimes k}$ are forcing equivalent, 
$\mathcal{U}_{\mathcal{E}_k}$  is isomorphic to 
$\mathcal{G}_{k}$, defined above.

We now provide some details for the spaces $\mathcal{E}_2$ and $\mathcal{E}_3$ in order to provide the reader with more intuition.

\begin{example}(The space $\mathcal{E}_{2}$)
	The members of $\mathcal{E}_{2}$ look like $\omega$ many copies of the Ellentuck space,  with  finite approximations obeying the   well-ordering $\prec$. 
	The well-order $\langle \omle{2}, \prec  \rangle$	begins as follows: $$() \prec (0) \prec (0,0) \prec (0,1) \prec (1) \prec (1,1) \prec (0,2) \prec (1,2) \prec (2) \prec (2,2) \prec \ldots$$
	The  lexicographic order on   $ \omle{2} $   has order type equal to 
	the countable ordinal $\omega^{2}$.
	Here, we picture an initial segment of $\omle{2}$; the set of  maximal nodes is precisely $r_{15}(\ome{2})$.

	\begin{figure}[H]
		\centering
		{\footnotesize
			\begin{tikzpicture}[scale=.7,grow'=up, level distance=40pt,sibling distance=.2cm]
			\tikzset{grow'=up}
			\Tree [.$()$ [.$(0)$ [.$(0,0)$ ][.$(0,1)$ ][.$(0,2)$ ][.$(0,3)$ ] [.$(0,4)$ ] ] [.$(1)$ [.$(1,1)$ ][.$(1,2)$ ][.$(1,3)$ ][.$(1,4)$ ] ] [.$(2)$ [.$(2,2)$ ] [.$(2,3)$ ][.$(2,4)$ ] ] [.$(3)$ [.$(3,3)$ ] [.$(3,4)$ ] ] [.$(4)$ [.$(4,4)$ ] ] ]
			\end{tikzpicture}
		}
		\caption{The initial structure of $\omle{2}$}
	\end{figure}
\end{example}

\begin{example}(The space $\mathcal{E}_{3}$)
	The well-order	$\langle \omle{3}, \prec \rangle$ begins as follows:
	\begin{align*}
	\emptyset & \prec (0) \prec (0,0) \prec (0,0,0) \prec (0,0,1) \prec (0,1)  \prec (0,1,1) \prec (1) \\ & \prec (1,1) \prec (1,1,1) \prec (0,0,2) \prec (0,1,2) \prec (0,2) \prec (0,2,2)  \\ & \prec (1,1,2) \prec (1,2) \prec (1,2,2) \prec (2) \prec (2,2) \prec (2,2,2) \prec (0,0,3) \prec \ldots
	\end{align*}
	
	The set $\omle{3}$ is a tree of height three with each non-maximal node branching into $\omega$ many nodes. The following figure shows the initial structure of $\omle{3}$; the set of maximal nodes forms $r_{20}(\ome{3})$.
	
	\begin{figure}[H]
		\centering
		{\footnotesize
			\begin{tikzpicture}[scale=.5,grow'=up, level distance=50pt,sibling distance=.2cm]
			\tikzset{grow'=up}
			\Tree [.$()$ [.$(0)$ [.$(0,0)$ [.$(0,0,0)$ ][.$(0,0,1)$ ][.$(0,0,2)$ ][.$(0,0,3)$ ] ][.$(0,1)$ [.$(0,1,1)$ ][.$(0,1,2)$ ][.$(0,1,3)$ ] ][.$(0,2)$ [.$(0,2,2)$ ][.$(0,2,3)$ ] ][.$(0,3)$ [.$(0,3,3)$ ] ] ] [.$(1)$ [.$(1,1)$ [.$(1,1,1)$ ][.$(1,1,2)$ ][.$(1,1,3)$ ] ][.$(1,2)$  [.$(1,2,2)$ ][.$(1,2,3)$ ]][.$(1,3)$  [.$(1,3,3)$ ] ] ] [.$(2)$ [.$(2,2)$  [.$(2,2,2)$ ] [.$(2,2,3)$ ]] [.$(2,3)$  [.$(2,3,3)$ ] ] ] [.$(3)$ [.$(3,3)$ [.$(3,3,3)$ ] ] ] ]
			\end{tikzpicture}
		}
		\caption{$\omle{3}$}
	\end{figure}
\end{example}


\subsection{The spaces $\FIN_{k}^{[\infty]}$}\label{subsec.FINk}

Next, we introduce a collection of  topological Ramsey spaces that contain infinite sequences of functions.  
The space  $\FIN_1^{[\infty]}$, also denoted simply as $\FIN^{[\infty]}$,
is connected with the famous 
Hindman's Theorem  \cite{Hindman74}.  
Milliken later proved that it forms a topological Ramsey space \cite{Milliken81}. 
The general spaces for $k\ge 2$   are based on work of  Gowers  in \cite{Gowers}.
The presentation here comes from \cite{Stevolibro}.

\begin{definition}
	For a positive integer $k$, define
	 $$
	  \FIN _{k}=\{f : \mathbb{N} \longrightarrow \{0,1,...,k \}:\{n:f(n) \neq 0 \} \text{ is finite and } k \in \mbox{range}(f)\}.
	  $$
	We consider $\FIN _{k}$ a partial semigroup under the operation	of taking the sum of two disjointly supported elements of $\FIN _{k}$. 
	For $f \in \FIN_{k}$, let $\supp(f)=\{ n: f(n) \neq 0 \}$. 
	A block sequence of members of $\FIN _{k}$ is a (finite or infinite) sequence $F=(f_{n})$ such that $$\max \supp( f_{m} ) < \min \supp( f_{n} ) \text{ whenever } m<n.$$

	For $1 \leq d \leq \infty$, let $\FIN^{[d]}_{k}$ denote the collection of all block sequences of length
	$d$.
	The notion of a partial subsemigroup generated by a given block sequence depends on the operation $T: \FIN _{k} \longrightarrow \FIN_{k-1}$ defined as follows: 
	$$
	T(f)(n)= \max \{f(n)-1,0 \}.
	$$
	Given a finite or infinite block sequence $F=(f_{n})$ of elements of $\FIN_{k}$ and an integer $j$ $(1 \leq j \leq k)$, 
	the partial subsemigroup $[F]_{j}$ of $\FIN_{j}$ generated by $F$ is the collection of members of $\FIN _{j}$ of the form $$ T^{(i_{0})}(f_{n_{0}})+...+ T^{(i_{l})}(f_{n_{l}})$$ for some finite sequence $n_{0}<...<n_{l}$ of nonnegative integers and some choice $i_{0},...,i_{l} \in \{0,1,...,k \}$. For $F=(f_{n})$, $G=(g_{n}) \in \FIN _{k}^{[ \leq \infty]}$, set $F \leq G$ if $f_{n} \in [G]_{k}$ for all $n$ less than the length of the sequence $F$. 
	Whenever $F \leq G$, we say that $F$ is a {\em block-subsequence} of $G$. 
	The partial ordering $\le$ on $\FIN _{k}^{[\infty]}$ allows the finitization $\leq_{\fin}$: 
	For $F,G\in \FIN _{k}^{[ < \infty]}$,
	$$F \leq_{\fin}G \text{ if and only if } F \leq G \emph{ and }(\forall l < \text{length}(G)) F\nleq G\upharpoonright l.$$
\end{definition}

\begin{theorem}[\cite{Stevolibro}]
	For every positive integer $k$, the triple $(\FIN _{k}^{[\infty]}, \leq, r)$ is a topological Ramsey space.
\end{theorem}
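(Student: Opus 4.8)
The plan is to invoke the Abstract Ellentuck Theorem: since $\FIN_k^{[\infty]}$ is manifestly a closed subspace of $\mathcal{AR}^\omega$ — a sequence of finite approximations converges to a point of the space precisely when its terms cohere under $\sqsubseteq$, and any $\sqsubseteq$-increasing sequence of finite block sequences has an infinite block sequence as its union — it suffices to verify axioms $\mathbf{A.1}$--$\mathbf{A.4}$ for the triple $(\FIN_k^{[\infty]}, \leq, r)$. Axiom $\mathbf{A.1}$ is immediate from the definition of $r_n(F) = (f_0,\dots,f_{n-1})$ as genuine initial segments: $r_0(F)$ is the empty sequence, distinct block sequences first differ at some coordinate, and the approximations of a fixed $F$ cohere by construction.

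For $\mathbf{A.2}$ I would take $\leq_{\mathrm{fin}}$ to be the finitization already defined in the statement. Part (a), finiteness of $\{a : a \leq_{\mathrm{fin}} b\}$, holds because every term of such an $a$ lies in the partial subsemigroup $[b]_k$ generated by the finitely many blocks of $b$ via disjoint sums and the tetris operation $T$; since $[b]_k$ is a finite set and the length of $a$ is bounded by that of $b$ (by the minimality clause $F \nleq G\upharpoonright l$), there are only finitely many candidates. Parts (b) and (c) unwind directly from the definitions of block-subsequence and of $[G]_k$. Axiom $\mathbf{A.3}$ is similarly combinatorial: if $\depth_B(a) < \infty$ and $A \in [\depth_B(a),B]$, then $a \leq_{\mathrm{fin}} r_{\depth_B(a)}(A)$, so $a$ can be continued to a full block-subsequence of $A$ by repeatedly adjoining sums of tetris-images of tail blocks of $A$ with high enough support; this yields the nonemptiness in (a), and (b) follows by a routine thinning argument that preserves a fixed initial segment while forcing the continuation into a prescribed cone.

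The crux, and the only genuinely hard step, is $\mathbf{A.4}$, the single-step pigeonhole. Given $a$ with $\depth_B(a) < \infty$ and a set $\mathcal{O} \subseteq \mathcal{AR}_{|a|+1}$, the one-step extensions $r_{|a|+1}[a,A]$ are exactly the sequences $a \frown (g)$ with $g \in [A]_k$ whose $\min\supp(g)$ exceeds $\max\supp$ of the last block of $a$; thus $\mathcal{O}$ induces a $2$-coloring $c$ of the relevant elements of $[A]_k$, and $\mathbf{A.4}$ asks for a block-subsequence $A'$, with depth keeping it in the cone $[\depth_B(a),B]$, on which $c$ is constant on $[A']_k$. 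This is precisely Gowers' $\FIN_k$ partition theorem (and, for $k=1$, Hindman's Theorem). I would establish it by the idempotent-ultrafilter method: working in the compact right-topological semigroup of ultrafilters on $\FIN_k$ that concentrate on blocks of arbitrarily high support, under the operation induced by disjoint sums, the Ellis--Namakura lemma produces an idempotent $\mathcal{U}$; by intertwining the action of the tetris map $T$ one obtains an idempotent compatible with all the reductions $T^{(i)}$, and a standard Galvin--Glazer recursion against $\mathcal{U}$ then extracts the monochromatic block-subsequence $A'$, delivering $\mathbf{A.4}$.

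I expect the main obstacle to be the ultrafilter machinery underlying $\mathbf{A.4}$: one must check that the sum operation is well defined on a \emph{partial} (not total) semigroup, that the relevant set of ultrafilters is a nonempty closed subsemigroup so that Ellis--Namakura applies, and — the subtle point for $k \geq 2$ — that the idempotent can be chosen to interact correctly with $T$, so that the full-strength Gowers statement follows rather than merely a Hindman-type one. Everything outside $\mathbf{A.4}$ is bookkeeping with block sequences and the definitions of $\leq$ and $\leq_{\mathrm{fin}}$.
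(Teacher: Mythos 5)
Your proposal is correct and follows essentially the same route as the proof the paper is pointing to: the paper states this theorem by citation to Todorcevic's book, where it is proved exactly as you describe — closedness in $\mathcal{AR}^{\omega}$ plus verification of $\mathbf{A.1}$--$\mathbf{A.4}$ to invoke the Abstract Ellentuck Theorem, with $\mathbf{A.2}$, $\mathbf{A.3}$ being combinatorial bookkeeping and $\mathbf{A.4}$ being precisely Gowers' $\FIN_k$ partition theorem (Hindman's Theorem when $k=1$), established by the Ellis--Namakura/Galvin--Glazer idempotent-ultrafilter method with idempotents chosen compatibly with the tetris operation $T$. You have also correctly isolated the genuine subtlety (the $T$-compatible idempotents for $k\ge 2$), so there is nothing to add.
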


The  space $\FIN^{[\infty]}_1$, also denoted simply as  $\FIN^{[\infty]}$, was  proved  to be a 
topological Ramsey space by Milliken in \cite{Milliken81}.
 This was the first 
 space that was built on the basis of a substantially different pigeon hole principle, according to Todorcevic in \cite{Stevolibro}. 
 Its power over the Ellentuck space was not fully realized until Gower's succesful applications of the ``Block Ramsey theory" when treating some problems from Banach space geometry.

 The ultrafilter $\mathcal{U}_{\FIN^{[\infty]}}$ associated with the space $\FIN^{[\infty]}$
 is  exactly a  stable ordered-union ultrafilter, in the terminology of \cite{Blass87}.
 Given $f\in \FIN$, let $\min(f)$ denote the minimum of the support of $f$, and let $\max(f)$ denote the maximum of the support of $f$.
 In \cite{Blass87}, Blass showed that the $\min$ and $\max$ projections of the 
ultrafilter  $\mathcal{U}_{\FIN^{[\infty]}}$
 are selective ultrafilters which are Rudin-Keisler incomparable.
 In \cite{Dobrinen/Todorcevic11}, 
 the analogous result for the Tukey order was shown.
More recently, 
Mildenberger 
 showed  in   \cite{Mildenberger11} that  forcing with  $\lgl \FIN^{[\infty]}_k,\le^*\rgl$  produces an ultrafilter  with at least $k+1$-near coherence classes of ultrafilters Rudin-Keisler below it.

\subsection{The Carlson Simpson dual Ramsey space}\label{subsec.CS}

Infinite dimensional dual Ramsey theory was developed by  Carlson and  Simpson in \cite{CS},
where they 
 establish a combinatorial theorem which is the dual of Ellentuck's Theorem.
  The dual form is concerned with colorings of the $k$-element partitions of a fixed infinite set.
 Now, we will introduce the Carlson-Simpson space, also called the  dual Ramsey space.

Using notation in \cite{Stevolibro}, 
let $\mathcal{E}_{\infty}$ denote the collection of all equivalence relations $E$ on $\omega$ with infinity many equivalence classes. 
Each equivalence class $[x]_{E}$ of $E$ has a minimal representative. 
Let $p(E)$ denote  the set of all minimal representatives of classes of $E$, and
let $\{p_{n}(E):n<\om\}$ be the increasing enumeration of $p(E)$. 
Note that  for each $E \in \mathcal{E}_{\infty}$,  $0 \in p(E)$  and hence
 $p_{0}(E)=0$.

For $E,F \in \mathcal{E}_{\infty}$ we say that $E$ is \emph{coarser} than $F$ and write $E \leq F$ if every  equivalence class of $E$  is  the union of   some finitely or infinitely many equivalence classes of $F$. 
The $n$-th \emph{approximation} of  $E \in \mathcal{E}_{\infty}$ is defined as follows:
$$r_{n}(E) = E\upharpoonright p_{n}(E).$$
Thus, $r_{n}(E)$ is simply the restriction of the equivalence relation $E$ to the finite set $\{0,1,...,p_{n}(E)-1\}$ of integers. 
Let 
$$
\mathcal{AE}_{\infty}
=\{r_n(E):E\in\mathcal{E}_{\infty}\mathrm{\ and\ }n<\om\}.
$$
Given $a \in \mathcal{AE}_{\infty}$, the 
 {\em length}  of $a$, denoted $|a|$,  is the integer $n$ such that $a=r_{n}(E)$ for some $E \in \mathcal{E}_{\infty}$.
  (Equivalently, $|a|$ is the number of equivalence classes of $a$.)
  The {\em domain} of $a$ is 
the integer $p_{|a|}(E)= \{0,1,...,p_{|a|}(E)-1 \}$, 
where $E$ is some member of $\mathcal{E}_{\infty}$ such that $a = r_{|a|}(E)$.

\begin{theorem}[Carlson and Simpson, \cite{CS}]
	The space $(\mathcal{E}_{\infty}, \leq,r)$ is a topological Ramsey space.
\end{theorem}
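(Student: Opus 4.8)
The plan is to verify that the triple $(\mathcal{E}_{\infty},\leq,r)$ is closed as a subspace of $\mathcal{AE}_{\infty}^{\om}$ and satisfies axioms $\mathbf{A.1}$--$\mathbf{A.4}$, and then to invoke the Abstract Ellentuck Theorem, which delivers the full conclusion at once. Closedness is immediate from the observation that the length $|r_{n}(E)|$ is exactly the number of equivalence classes of $r_{n}(E)$, namely $n$: any coherent branch $a_{0}\sqsubset a_{1}\sqsubset\cdots$ through $\mathcal{AE}_{\infty}$ with $|a_{n}|=n$ has strictly increasing domains, so its union is an equivalence relation on $\om$ with infinitely many classes, hence a member of $\mathcal{E}_{\infty}$. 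Axiom $\mathbf{A.1}$ is then read directly off the definitions: part (a) holds because $p_{0}(E)=0$ gives $r_{0}(E)=\emptyset$; part (b) holds because an equivalence relation is determined by its finite restrictions; and part (c) holds because $|r_{n}(E)|=n$ pins down the length while the restrictions $E\re p_{k}(E)$ cohere for $k<n$.

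For $\mathbf{A.2}$ I would take $\leqf$ to be coarsening on a common domain: declare $a\leqf b$ iff $\dom(a)=\dom(b)$ and every class of $a$ is a union of classes of $b$. Part (a) holds since the coarsenings of a fixed finite $b$ on the same domain are just the equivalence relations on its finitely many classes. For part (b), note that $E\leq F$ forces every minimal representative of $E$ to be a minimal representative of $F$, so $p(E)\sse p(F)$; thus for each $n$ there is an $m$ with $p_{n}(E)=p_{m}(F)$, whence $r_{n}(E)$ and $r_{m}(F)$ share a domain and $r_{n}(E)\leqf r_{m}(F)$, and the converse direction is equally direct. Part (c) is the finite amalgamation: given $a=r_{k}(b)\sqsubset b$ and $b\leqf c$, the value $p_{k}(b)$ is a minimal representative of $c$, so $d:=c\re p_{k}(b)$ is an initial approximation $d\sqsubset c$ on the domain of $a$, and coarseness of $a$ relative to $d$ follows from that of $b$ relative to $c$. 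The amalgamation axiom $\mathbf{A.3}$ is verified in the same concrete spirit: once $\depth_{F}(a)<\infty$ exhibits $a$ as a coarsening of $r_{n}(F)$ on domain $p_{n}(F)$, one builds witnesses in $[a,E]$ by following $a$ on its domain and then copying the classes of $E$ beyond it, and adjusts $E$ above depth $n$ to obtain the $E'$ of part (b).

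The heart of the argument, and the step I expect to be the main obstacle, is the pigeonhole axiom $\mathbf{A.4}$. Fixing $a$ with $\depth_{F}(a)<\infty$ and a set $\mathcal{O}\sse\mathcal{AE}_{|a|+1}$, the family $r_{|a|+1}[a,E]$ consists of the one-class extensions of $a$ realized inside $E$, that is, the ways of splitting off a single new equivalence class above $a$ while distributing the intervening integers among the old classes and the new one. Under the standard duality between finite equivalence relations on initial segments and rigid surjections (equivalently, parameter words), a two-coloring $\mathcal{O}$ of these one-step extensions is precisely a two-coloring of the combinatorial lines of a suitable parameter set, and the desired $E\in[\depth_{F}(a),F]$ on which it is constant is produced by the Hales--Jewett theorem (the one-parameter instance of the Graham--Rothschild theorem). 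All the remaining content of the statement is then free from the Abstract Ellentuck Theorem, so the real work is isolating $\mathbf{A.4}$ and recognizing it as Hales--Jewett.
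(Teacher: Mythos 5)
The paper itself does not prove this statement---it quotes it from Carlson and Simpson \cite{CS}, with the presentation following \cite{Stevolibro}---so your proposal must be measured against the known proofs in the literature. Your scaffolding is the standard modern route: closedness of $\mathcal{E}_{\infty}$ in $\mathcal{AE}_{\infty}^{\om}$, the axioms $\mathbf{A.1}$--$\mathbf{A.3}$ with $\leq_{\fin}$ taken as coarsening on a common domain, and the observation that $p(E)\sse p(F)$ whenever $E\le F$, are all correct and genuinely routine, and the Abstract Ellentuck Theorem does reduce everything to $\mathbf{A.4}$.

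The gap is your disposal of $\mathbf{A.4}$ by the finite Hales--Jewett theorem, and it sits exactly where the entire difficulty of the theorem lives. $\mathbf{A.4}$ demands an \emph{infinite} partition $A\in[\depth_B(a),B]$ such that \emph{every} one-step extension of $a$ realized inside $A$---an infinite family of variable words over the alphabet whose letters are the classes of $a$---lies in $\mathcal{O}$, or every one lies in $\mathcal{O}^c$. The Hales--Jewett and Graham--Rothschild theorems are finite: inside words of a sufficiently large finite length they produce a monochromatic configuration of a prescribed finite size. No compactness argument upgrades this to the infinitary statement ( compactness runs in the other direction), and no naive fusion works either: when you extend a partial fusion by one more block, the colors of the newly created variable words depend on all later choices, which is the same obstruction that prevents deducing Hindman's theorem from the finite sums theorem. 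What $\mathbf{A.4}$ actually requires is the Carlson--Simpson lemma on variable words (the infinitary Hales--Jewett theorem, the dual of Hindman's theorem): for every finite coloring of the variable words over a finite alphabet there exist a word $w_0$ and variable words $w_1,w_2,\dots$ such that all variable words obtained by substituting letters or the variable into $w_1,\dots,w_n$ and concatenating onto $w_0$ receive the same color. That lemma is the real content of \cite{CS}; it is proved there by a delicate infinitary combinatorial argument, and in \cite{Stevolibro} by idempotent ultrafilters on a partial semigroup of words and variable words, before being fed into the verification of $\mathbf{A.4}$ for $\mathcal{E}_{\infty}$. The parallel inside this very paper is the space $\FIN^{[\infty]}$ of Subsection \ref{subsec.FINk}: its pigeonhole $\mathbf{A.4}$ is Hindman's theorem itself, not the finite sums theorem. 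A secondary inaccuracy points the same way: since $\mathcal{O}$ colors one-step extensions, you are coloring \emph{lines} (one-parameter words), not points, so even the correct finite analogue is the Graham--Rothschild theorem for one-parameter subwords rather than Hales--Jewett, which colors points and returns a single monochromatic line.
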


The space  $\mathcal{E}_{\infty}$ can be  identified  with the set of all rigid  surjections
from $\om$ onto $\om$.
A surjection 
 $f:\om\ra\om$ 
 is {\em rigid} if  for each $i\in\om$, 
 $\min(f^{-1}\{i\})<\min(f^{-1}\{i+1\})$.
Given $E \in \mathcal{E}_{\infty}$, 
define $f_E(0)=0$. 
For $n \ge 1$,
 assuming that   for every $i \in n$ we have defined $f_{E}(i)$,
if  there is an $i \in n$ such that $n$ and $i$ belong to the same equivalence class of $E$, then  let $f_{E}(n)=f_{E}(i)$;
 otherwise, let  $f_{E}(n)=\max\{f_{E}(i): i \in n \}+1$. 
 Conversely, given  a rigid surjection 
 $h: \omega \rightarrow \omega$,
 the sets $h_{i}^{-1}(i)$ form a partition to $\omega$;
  we will denote this partition as $E_{h} \in \mathcal{E}_{\infty}$.
Given rigid surjections
 $g,h$, note that $E_{g} \leq E_{h}$ if and only if there exists a rigid surjection $f$ such that $f\circ h = g$ if and only if for each pair 
  $m<n < \omega$,
$h(n)=h(m)$ implies  $g(n)=g(m)$.

In \cite{matet1986}, Matet  studied the partial order $(\mathcal{E}_{\infty},\leq)$ as a lattice, and proved the following about the partial order $\le^*$, which was given in Definition
 \ref{defn.Mijares*}.

\begin{theorem}[Matet, \cite{matet1986}]
$(\mathcal{E}_{\infty},\leq^{\ast})$ is a $\sigma$-closed partial order.
\end{theorem}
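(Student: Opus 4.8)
The plan is to show that $(\mathcal{E}_{\infty},\leq^{\ast})$ is $\sigma$-closed by taking an arbitrary $\leq^{\ast}$-decreasing sequence $\lgl E_n:n<\om\rgl$ and constructing a single $F\in\mathcal{E}_{\infty}$ with $F\leq^{\ast} E_n$ for all $n$. Recall from Definition \ref{defn.Mijares*} that $F\leq^{\ast} E_n$ means there is a finite approximation $a\in\mathcal{AE}_{\infty}\re F$ with $[a,F]\sse[a,E_n]$; concretely, this says that past some finite initial segment, every equivalence class of $F$ refines into classes of $E_n$ in a way compatible with $E_n$ being coarser. First I would unwind exactly what $F\leq^{\ast} E_n$ asserts for the dual space: there should be an integer $p_n$ such that the restriction $F\re[p_n,\om)$ is coarsened correctly by $E_n$, i.e.\ the partition $F$ agrees with being a block-refinement of $E_n$ on the tail beyond the finite stem of length determined by $a$.

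The construction itself would be a fusion/diagonalization argument, which is the standard way to witness $\sigma$-closure for these almost-reduction orders. I would build $F$ in stages, deciding its equivalence classes block by block. At stage $n$, having committed to a finite approximation $r_{m_n}(F)$, I would use the fact that $E_n\leq^{\ast}$-dominates the previously handled relations to extend $F$ so that its new blocks respect $E_n$ (and, by transitivity of the tail condition, all of $E_0,\dots,E_n$ simultaneously on the relevant tail). The key bookkeeping device is that $\leq^{\ast}$ only requires agreement past a finite stem, so at each stage I must only ensure the \emph{new} material added to $F$ lies correctly inside $E_n$; the finitely many violations at the beginning are absorbed into the witnessing approximation $a$ for that particular $n$. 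Because each $E_n$ has infinitely many classes and we are free to coarsen $F$ as needed, there is always room to extend: I would invoke Axiom $\mathbf{A.3}$ (nonemptiness of $[a,A]$) and the combinatorial structure of rigid surjections to guarantee the extension exists at each step.

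The main obstacle I anticipate is verifying that the single limit object $F$ genuinely satisfies $F\leq^{\ast} E_n$ for \emph{every} $n$, not merely that each finite stage is consistent. This requires checking that once I have passed stage $n$ in the fusion, no later stage corrupts the tail agreement between $F$ and $E_n$ — that is, the extensions dictated by $E_{n+1},E_{n+2},\dots$ must remain refinements of $E_n$ on the tail. This is where I would lean on transitivity of the coarsening relation $\leq$ together with the decreasing hypothesis $E_{n+1}\leq^{\ast} E_n$: refining compatibly with a coarsening of $E_n$ automatically refines compatibly with $E_n$ itself beyond a finite initial segment. I would phrase the fusion so that the stem lengths $p_n$ form a nondecreasing sequence and the tail condition, once established at stage $n$, is preserved as an invariant through all subsequent stages. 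The delicate point is managing the interaction between the rigid-surjection normalization (the requirement that minimal representatives appear in increasing order) and the block extensions, but this is handled by always choosing the minimal available representative for each new class of $F$, keeping $F$ a legitimate member of $\mathcal{E}_{\infty}$ throughout.
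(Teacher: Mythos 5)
A preliminary remark: the paper does not prove this theorem --- it is quoted from Matet's paper \cite{matet1986} --- so your proposal can only be judged on its own merits, against what a correct argument must accomplish. Your fusion skeleton is the right \emph{kind} of argument, but it rests on an incorrect unwinding of $\leq^{\ast}$, and the error infects precisely the step your construction depends on. In $\mathcal{E}_{\infty}$, the relation $F\leq^{\ast}E$ is \emph{not} a statement about the trace of $F$ on a tail $[p,\om)$ of the domain; it is the class-wise statement that all but finitely many equivalence classes of $F$ are unions of $E$-classes. (The finitely many exceptional classes can be taken to be those meeting the domain of the witness $a$; they are then handled by the observation that the complement of a union of $E$-classes is again a union of $E$-classes.) The two formulations differ because an exceptional class has a small \emph{minimum} but may be a cofinal subset of $\om$. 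Concretely, let $E$ have classes $\{2k,2k+1\}$ for $k<\om$, and let $F$ have classes $\{4n:n<\om\}$, $\{4n+1:n<\om\}$, and $\{4n+2,4n+3\}$ for each $n<\om$. Then $F\leq^{\ast}E$, since only two classes of $F$ fail to be unions of $E$-classes; yet for every $p$ the class $\{4n:n<\om\}$ splits the $E$-class $\{4n,4n+1\}$ inside $[p,\om)$ whenever $4n\geq p$, so the tail condition you formulate fails for every $p$. (Incidentally, you also repeatedly invert the direction of the order: here $F\leq E$ means $F$ is \emph{coarser} than $E$, so classes of $F$ are unions of $E$-classes, not refinements of them.)

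For the same reason, your pivotal preservation claim --- ``refining compatibly with a coarsening of $E_n$ automatically refines compatibly with $E_n$ itself beyond a finite initial segment'' --- is false. The hypothesis $E_{n+1}\leq^{\ast}E_n$ gives only that all but finitely many \emph{classes} of $E_{n+1}$ are unions of $E_n$-classes; the finitely many exceptional classes may meet every tail of $\om$, so no choice of stem length $p_n$ avoids them, and a block of $F$ assembled from $E_{n+1}$-classes at an arbitrarily late stage can still fail to be a union of $E_n$-classes. The real content of the theorem is the bookkeeping that avoids exceptional classes class-wise, and this is absent from your proposal. One correct route: prove by induction that for each $n$, all but finitely many classes of $E_n$ are simultaneously unions of $E_m$-classes for every $m<n$ (a class that is bad at level $n$ is contained in at most one class of $E_{n+1}$ that is a union of $E_n$-classes, so only finitely many new bad classes arise at each level); then recursively choose pairwise disjoint good classes $d_n$ of $E_n$ --- possible because a good class of $E_n$ either contains or is disjoint from each previously chosen $d_m$ with $m<n$, and at most finitely many contain one --- and let $F$ be the partition whose classes are the $d_n$, $n\geq 1$, together with $\om\setminus\bigcup_{n\geq 1}d_n$ if nonempty. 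For each $m$, every $d_n$ with $n>m$ is a union of $E_m$-classes, so $F\leq^{\ast}E_m$ by the class-wise characterization, the finitely many exceptions being $d_1,\dots,d_m$ and the remainder class. Without this device (or an equivalent one), your fusion has no mechanism guaranteeing that the limit partition is $\leq^{\ast}$-below every $E_n$.
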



\section{Ramsey degrees for ultrafilters associated to Ramsey spaces}\label{sec:degrees}

As seen in the previous section, many $\sigma$-closed forcings generating ultrafilters  of interest  have been shown to contain topological Ramsey spaces as dense subsets.
The initial purpose for constructing those  new Ramsey spaces  was to find the exact Rudin-Keisler and Tukey structures below those ultrafilters.
The Abstract Ellentuck Theorem  proved to be  vital to those investigations, which 
have  been the subject of work in \cite{R1},
\cite{Rn},
\cite{fraisseClasses},
\cite{highDimensional}, and
\cite{DobrinenJML16};
 the paper 
\cite{Dobrinen/Seals} provides an overview those results.

 In this section we develop a  
 general 
 method,
 utilizing  Theorem \ref{NashWilliams},
 to calculate Ramsey degrees for ultrafilters forced by  topological Ramsey  spaces with certain properties, which we call Independent Sequences of Structures,  discussed below.
 These properties are satisfied by 
the Ellentuck space, the spaces $\mathcal{R}_n$, $1\le n<\om$ (see Subsection 
\ref{subsec4.1}), and 
the  spaces generated by \Fraisse\ classes with the Ramsey property (see Subsection \ref{subsec.Fraisse}).
 Some of these ultrafilters are well-known and some are new, having  arisen  during  investigations  discussed in the previous paragraph.
This method also provides
simple, direct proofs for  some known Ramsey  degrees,  in particular,
 the ultrafilters  $\mathcal{U}_n$ 
 of Laflamme in \cite{Laflamme} mentioned in Subsection \ref{subsec4.1}.
 Without loss of generality (by restricting below some member of the space if necessary), we will assume all
 topological Ramsey spaces $\mathcal{R}$  contain a strongest member, denoted by $\mathbb{A}$.

 
 \subsection{A general method for Ramsey degrees for ultrafilters associated to Ramsey spaces composed of  independent sequences of structures}\label{subsec.5.1}

 The Ramsey spaces associated with the ultrafilters of Baumgartner-Taylor, Blass, and Laflamme mentioned in Section 4 all have the following property. 

\begin{definition}[Independent Sequences of Structures (ISS)]\label{defn.ISS}
We say that a   topological Ramsey space 
$(\mathcal{R}, \leq, r)$ has {\em Independent Sequences of Structures (ISS)}  if and only if the following hold:
There are relations $R_l$, $l<L$ for some fixed finite integer $L$,  where $R_0$ is a linear order,  and the domain of a structure $S$ with these relations is denoted $\dom(S)$.
The largest member $\mathbb{A}$ in $\mathcal{R}$ is a sequence $\lgl \mathbb{A}(i):i<\om\rgl$ such that each $\mathbb{A}(i)$ is a finite structure with relations $R_l$, $l<L$.
For $i<i'$, the domains of $\mathbb{A}(i)$  and $\mathbb{A}(i')$ are disjoint, and 
there are no relations between them, hence the {\em independence} of the sequence of structures.
Each  member $B \in \mathcal{R}$ can be identified with a sequence $\langle B(i): i \in \omega \rangle$ where each $B(i)$ is isomorphic to $\mathbb{A}(i)$, 
and moreover, there is a strictly increasing sequence $(k_i)_{i<\om}$ such that each $B(i)$ is an  induced 
substructure of $\mathbb{A}(k_i)$.
For $B,C\in\mathcal{R}$, $C\le B$ if and only if each $B(n)$ is a substructure of some $B(i_n)$  for some  strictly increasing sequence $(i_n)_{n<\omega}$.
The members of $\mathcal{AR}_m$ are simply initial sequences of  length $m$ of members of $\mathcal{R}$:
for $B\in\mathcal{R}$, 
$r_{m}(B):=\langle B(i): i < m\rangle$. 
We require that  $\dom(\mathbb{A}(0))$ is a singleton;
hence the members of $\mathcal{AR}_1$ are singletons.

For each  $m <  \omega$ and $a,b \in \mathcal{AR}_{m}$ there exists a unique (because of $R_0$) isomorphism $\varphi_{a,b}:a \rightarrow b$. 
We will write $\varphi$ to denote $\varphi_{a,b},$ when $a$ and $b$ are obvious. 
 If $X \in \mathcal{R}$ and $s \in [\mathcal{AR}_1\re X]^{n}$ for some $n \in \omega$, 
 we think of $s$ with the structure inherited by $X$.
Thus, if $k_0,\dots,k_m$ are those indices such that $x\cap X(k_i)\ne\emptyset$ for each $i\le m$,
then $s=\lgl s_i:i\le m\rgl$, where each $s_i$  is the structure on $\dom(s)\cap \dom(X(l_i))$ with the substructure inherited from $X(l_i)$.
Since each $X(l_i)$ is a substructure of some $\mathbb{A}(k_i)$,
each $s_i$ is also  the structure on $\dom(s)\cap \dom(\mathbb{A}(k_i))$ with the substructure inherited from $\mathbb{A}(k_i)$.
For  $s,t \in [\mathcal{AR}_{1}]^{n}$ we say that $s$ and $t$ are \emph{isomorphic}, and write $s\cong t$,  if  for some $m$, $s=\lgl s_i:i\le m\rgl$ and $t=\lgl t_i:i\le m\rgl$, and each $s_i$ is isomorphic to $t_i$. 
For $t \in [\mathcal{AR}_{1}]^{n},$ the \emph{isomorphism class} of $t$ is  the collection of substuctures $s \in [\mathcal{AR}_{1}]^{n},$ such that $s$ and $t$ are isomorphic.
\end{definition}

From now on, for $X\in\mathcal{R}$, we shall simply write $[X]^n$ instead of $[\mathcal{AR}_1\re X]^n$.

\begin{definition}[ISS$^+$]\label{defn.ISS+}
Let $\mathcal{R}$ be a space with the ISS.  We say that  $\mathcal{R}$  satisfies the ISS$^+$ if additionally, the following hold:
\begin{itemize}
\item[a)] 
There is some  $X \in \mathcal{R}$ such that for any two isomorphic members $u,v\in  [X]^{n}$, there exist $m \in \omega$,  $a,b \in \mathcal{AR}_{m}$, $s \in [a]^{n}$ isomorphic to $u$ and $t \in [b]^{n}$ isomorphic to $v$ such that  $\varphi(s)=t.$

\item[b)]
For every $n \geq 2,$ there exists an $m \in \omega$ such that for every $X \in \mathcal{R}$ and for every $s \in [\mathcal{AR}_{1}]^{n},$ there exists some $t \in [r_{m}(X)]^{n}$ such that $s$ and $t$ are isomorphic.
\end{itemize}
\end{definition}

For spaces with the ISS, the $\sigma$-closed partial order  $\le^*$ from  Definition \ref{defn.Mijares*}
 is simply $\sse^*$.
Given a generic filter $G$ forced by $(\mathcal{R},\sse^*)$, 
the
ultrafilter $\mathcal{U}_{\mathcal{R}}$ on  base set $\mathcal{AR}_1$  generated by $G$ was presented in 
Definition \ref{defn.U_R}.
If  $a \in \mathcal{AR}$ and $A \in \mathcal{R}$, we will write $[a]^{n}$ to denote $[\mathcal{AR}_{1}\upharpoonright a]^{n}$.

  \begin{definition}\label{defn.RamseydegreeG}
Given a  topological Ramsey space $(\mathcal{R},\le,r)$,
for $n\ge 1$,   define 
 $$
 t(\mathcal{R},n)
 $$ 
 to be the least  number $t$, if it exists,  such that for each $l\ge 2$ and each coloring $c:[\mathcal{AR}_1]^n\ra l$,
 there is a member $X\in \mathcal{R}$ such that 
 the restriction of $c$ to $[X]^n$ takes no more than $t$ colors.
  \end{definition}

  \begin{lemma}\label{fact.Important}
  Given a topological Ramsey space $(\mathcal{R},\le,r)$, 
  the ultrafilter $\mathcal{U}_{\mathcal{R}}$ generated by any
 generic filter $G$ forced by $(\mathcal{R},\le)$ satisfies 
 $$t(\mathcal{U}_{\mathcal{R}},n)=t(\mathcal{R},n)$$
  for each $n\ge 2$. 
 \end{lemma}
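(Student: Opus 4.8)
The plan is to reduce the identity to two inequalities, after first arranging that both degrees range over the same family of colorings. Since $(\mathcal{R},\le)$ has the same separative quotient as the $\sigma$-closed order $(\mathcal{R},\le^*)$ of Definition \ref{defn.Mijares*}, forcing with $(\mathcal{R},\le)$ is forcing-equivalent to a $\sigma$-closed forcing and hence adds no new reals; as $\mathcal{AR}_1$ is countable, every coloring $c\colon[\mathcal{AR}_1]^n\ra l$ lying in $V[G]$ already belongs to the ground model $V$. Consequently I may treat all colorings as ground-model objects and, using that $\mathcal{R}$ together with its membership and the color-count predicate are absolute between $V$ and $V[G]$, freely pass statements of the form ``there is $X\in\mathcal{R}$ with $c\re[X]^n$ taking at most $s$ colors'' between the two models. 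Write $t:=t(\mathcal{R},n)$ and $t':=t(\mathcal{U}_{\mathcal{R}},n)$.

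First I would prove $t\le t'$, the easy direction. Fix a coloring $c$ and, by the definition of $t'$, choose $Y\in\mathcal{U}_{\mathcal{R}}$ so that $c\re[Y]^n$ uses at most $t'$ colors. By Definition \ref{defn.U_R} there is $A\in G$ with $\mathcal{AR}_1\re A\sse Y$, whence $[A]^n\sse[Y]^n$ and $c\re[A]^n$ also uses at most $t'$ colors. Since $A\in\mathcal{R}$ is a ground-model member, the statement ``there is $X\in\mathcal{R}$ with $c\re[X]^n$ taking at most $t'$ colors'' holds in $V[G]$, so by absoluteness it holds in $V$; as $c$ was arbitrary this gives $t(\mathcal{R},n)\le t'$.

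The reverse inequality $t'\le t$ is where the real work lies. Given $c$, put $\mathcal{D}_c=\{A\in\mathcal{R}: c\re[A]^n \text{ takes at most } t \text{ colors}\}$, a ground-model subset of $\mathcal{R}$. If $\mathcal{D}_c$ is dense in $(\mathcal{R},\le)$, then by genericity some $A\in G\cap\mathcal{D}_c$, and then $\mathcal{AR}_1\re A\in\mathcal{U}_{\mathcal{R}}$ witnesses that $c$ uses at most $t$ colors on a member of $\mathcal{U}_{\mathcal{R}}$; since $c$ is arbitrary this yields $t'\le t$. The main obstacle is exactly the density of $\mathcal{D}_c$: the definition of $t(\mathcal{R},n)$ only furnishes \emph{some} $X$ on which $c$ uses at most $t$ colors, whereas density demands such a witness below every prescribed condition $B$. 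To supply this I would exploit the localized character of the Ramsey property in a topological Ramsey space: the Abstract Nash--Williams Theorem (Theorem \ref{NashWilliams}) holds below an arbitrary $B$, so a coloring of $[\mathcal{AR}_1]^n$—after passing to the induced colorings of finite approximations of suitable length—can be canonized within $[\emptyset,B]$ to depend only on the isomorphism type of the $n$-subset. Because this takes place below an arbitrary $B$ and the restriction of $\mathcal{R}$ to $\{A:A\le B\}$ carries the same type structure on its first approximations as $\mathcal{R}$ itself, the number of colors surviving on a suitable $A\le B$ does not exceed the minimum $c$ attains over the whole space, which by the definition of $t$ is at most $t$. This produces $A\le B$ with $A\in\mathcal{D}_c$, establishing density.

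Finally I would observe that the two inequalities also settle existence: if either degree is finite, the corresponding argument shows the other is finite and no larger, so the two are simultaneously defined; combining them gives $t(\mathcal{U}_{\mathcal{R}},n)=t(\mathcal{R},n)$.
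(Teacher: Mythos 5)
Your overall skeleton is sound, and it is in fact more careful than the paper's own treatment: the paper's entire proof of this lemma is the single sentence ``This follows immediately by genericity of $G$.'' Your first direction is correct: since $(\mathcal{R},\le)$ and the $\sigma$-closed order $(\mathcal{R},\le^*)$ have isomorphic separative quotients, no new colorings of the countable set $\mathcal{AR}_1$ appear in $V[G]$, and a witness $A\in G$ with $\mathcal{AR}_1\re A\sse Y$ pulls the bound $t(\mathcal{U}_{\mathcal{R}},n)$ back to $V$ by absoluteness, giving $t(\mathcal{R},n)\le t(\mathcal{U}_{\mathcal{R}},n)$. You have also correctly isolated the crux of the converse: genericity produces $A\in G\cap\mathcal{D}_c$ only if $\mathcal{D}_c$ is dense, whereas Definition \ref{defn.RamseydegreeG} supplies a single witness located somewhere in $\mathcal{R}$, below no prescribed condition.

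The genuine gap is your argument for that density. You invoke ``canonization to isomorphism types'' below an arbitrary $B$, together with the claim that $\{A:A\le B\}$ ``carries the same type structure on its first approximations as $\mathcal{R}$ itself.'' For an abstract topological Ramsey space neither ingredient exists: there is no notion of isomorphism type of an $n$-subset of $\mathcal{AR}_1$, no canonization theorem, and no homogeneity of the space below different conditions; these are precisely the ISS and ISS$^+$ hypotheses (Definitions \ref{defn.ISS} and \ref{defn.ISS+}) introduced \emph{after} this lemma for Theorem \ref{numbers}, and they are not hypotheses here. The Abstract Nash--Williams Theorem applied below $B$ homogenizes a finite partition of a Nash--Williams family, but by itself it yields no comparison between the number of colors surviving below $B$ and the global minimum defining $t(\mathcal{R},n)$; that comparison is exactly what has to be proved. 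The gap is not cosmetic: your argument never uses the standing assumption that $\mathcal{R}$ has a strongest member, yet without that assumption the statement is false. Amalgamate the Ellentuck space $\mathcal{E}_1$ and the space $\mathcal{R}_1$ at the empty approximation: conditions in different components are incompatible, and axioms $\mathbf{A.1}$--$\mathbf{A.4}$ together with metric closedness pass to the union (every nonempty $[a,B]$ with $a\ne\emptyset$ lies in a single component, and for $a=\emptyset$ the homogenizing member required by $\mathbf{A.4}$ may be taken in the component of $B$), so the union is a topological Ramsey space. Ramsey's theorem applied on the $\mathcal{E}_1$ side gives $t(\mathcal{R},2)=1$; but a generic filter containing $\mathbb{T}_1$ yields an ultrafilter concentrating on $\mathcal{AR}_1\re\mathbb{T}_1$ whose trace there is the $\mathcal{R}_1$-generic ultrafilter $\mathcal{U}_1$, and $t(\mathcal{U}_1,2)=2$ by Corollary \ref{R1n}. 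So a correct proof must use structural features of the particular space which make witnesses realizing $t(\mathcal{R},n)$ dense --- which is what the paper in effect does, space by space, in Lemma \ref{lem.tupperbd} (ISS$^+$ spaces) and inside the proof of Theorem \ref{thm.RdegGk} (the spaces $\mathcal{E}_k$); it cannot be extracted from genericity and Definition \ref{defn.RamseydegreeG} alone. To be fair, the paper's one-line proof elides the same point; your sketch, once the ISS$^+$ hypotheses are made explicit, essentially becomes the paper's Lemma \ref{lem.tupperbd}, but as a proof of the lemma as stated it does not go through.
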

 
 This follows immediately by genericity of $G$. 
 Thus, finding the Ramsey degrees for topological Ramsey spaces is equivalent to finding the Ramsey degrees for their forced ultrafilters.

\begin{definition}\label{defn.k(R,n)}
If $(\mathcal{R}, \leq, r)$ is a topological Ramsey space satisfying the ISS$^+$, define $\bk(\mathcal{R},n)$ to be  the number of isomorphism classes for substructures $b \in [\mathcal{AR}_{1}]^{n}$ such that $b$ is a substructure of $\mathbb{A}(i)$ for some $i \in \omega$.
\end{definition}

Notice that b) of Definition \ref{defn.ISS+} guarantees that,  for each $n$,  $\bk(\mathcal{R},n)$  is finite.

\begin{lemma}\label{lem.tupperbd}
If $(\mathcal{R}, \leq, r)$ is a topological Ramsey space with  ISS$^+$, then  for each $n\ge 1$, 
\begin{equation}\label{eq.upperbd}
t(\mathcal{R},n) \leq \sum_{1 \leq q \leq n}   \ \sum_{j_{1}+...+j_{q}=n} 
\  \prod_{1 \leq i \leq q} \,  \bk(\mathcal{R},j_{i	}).\end{equation}
\end{lemma}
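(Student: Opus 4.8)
The plan is to prove the bound in two stages: first identify the right-hand side of (\ref{eq.upperbd}) as the exact number of isomorphism classes of $n$-element subsets available inside any fixed member of $\mathcal{R}$, and then use the Ramsey property of the space to produce a member $X$ on which the coloring $c$ is constant on each isomorphism class. Throughout I write $N$ for the right-hand side of (\ref{eq.upperbd}).

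Stage one is a bookkeeping computation. Fix any $X \in \mathcal{R}$ and an $s \in [X]^n$. By the ISS description, $s$ decomposes according to which blocks $X(i)$ it meets: if it meets exactly $q$ blocks, with $j_1,\dots,j_q$ points in them listed in increasing block order, then $(j_1,\dots,j_q)$ is a composition of $n$ into $q$ positive parts, and, since the blocks carry no relations between them, the isomorphism type of $s$ is determined precisely by this composition together with the isomorphism type of each of the $q$ one-block pieces. By Definition \ref{defn.k(R,n)} the $i$-th piece realizes one of $\bk(\mathcal{R},j_i)$ many isomorphism types, and part b) of Definition \ref{defn.ISS+} guarantees each $\bk(\mathcal{R},j_i)$ is finite. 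Because the block order is linear via $R_0$, distinct orders of the parts yield non-isomorphic subsets, so the relevant compositions are ordered. Summing over $q$, over ordered compositions $j_1+\cdots+j_q=n$, and multiplying the independent choices of block-type, I obtain exactly $N$; by ISS this count does not depend on $X$.

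Stage two is the Ramsey-theoretic core. Given $l\ge 2$ and $c:[\mathcal{AR}_1]^n\ra l$, I claim there is an $X\in\mathcal{R}$ such that $c(s)=c(t)$ whenever $s,t\in[X]^n$ are isomorphic; granting this, $c\re[X]^n$ takes at most $N$ values and the bound follows. To build such an $X$ I would enumerate the finitely many isomorphism types $\tau_1,\dots,\tau_N$ and construct a $\le$-decreasing sequence $\mathbb{A}=X_0\ge X_1\ge\cdots\ge X_N$ so that, for each $p$, the coloring $c$ is constant on the copies of $\tau_p$ inside $X_p$. Homogeneity for $\tau_p$ is inherited by every $Y\le X_p$, so the types never interfere and $X:=X_N$ is as required. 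To pass from $X_{p-1}$ to $X_p$ I would code the copies of $\tau_p$ by finite approximations and apply the Abstract Nash-Williams Theorem (Theorem \ref{NashWilliams}): its two-colour form, iterated over the $l$ colours along a front, yields a member on which the coding family is monochromatic, and this translates into $c$ being constant on the $\tau_p$-copies.

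The main obstacle is making stage two rigorous, since a single copy of $\tau_p$ is an $n$-subset rather than an approximation, and several copies may share the same minimal approximation. The tool for this is part a) of Definition \ref{defn.ISS+}: the canonical isomorphisms $\varphi_{a,b}$ between equal-length approximations let me align any two isomorphic copies, so that a copy of $\tau_p$ is determined by a finite approximation together with a fixed finite \emph{placement pattern} whose number of patterns is independent of the approximation. This is precisely what is needed to encode the $\tau_p$-copies as a Nash-Williams (thin) family and to guarantee that monochromaticity of that family transfers to constancy of $c$ on \emph{all} isomorphic copies, not merely the aligned ones. Verifying that this coding yields a genuine front and that $\varphi_{a,b}$ respects $c$ after homogenization is the delicate point; once it is in place, the iteration terminates after $N$ steps, since there are only finitely many types, and (\ref{eq.upperbd}) is established.
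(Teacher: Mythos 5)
Your proposal is correct and follows essentially the same route as the paper's proof: count the isomorphism classes of $n$-subsets via compositions $j_1+\cdots+j_q=n$ weighted by the $\bk(\mathcal{R},j_i)$, then use the Abstract Nash-Williams Theorem on the front $\mathcal{AR}_m$ together with the canonical isomorphisms $\varphi_{a,b}$ and part a) of ISS$^+$ to obtain a member on which the color of an $n$-set depends only on its isomorphism type. The only difference is organizational: the paper homogenizes all types simultaneously, coloring each $b\in\mathcal{AR}_m$ by the full tuple $\bigl(c(\varphi_{a,b}(u_l))\bigr)_{l<L}$ and applying Nash-Williams once to get a single dense set, whereas you iterate type-by-type along a decreasing chain $X_0\ge\cdots\ge X_N$; both arguments rest on the same two pillars and prove the same bound.
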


\begin{proof}
Fix   $n\ge 1$ and $p\ge 2$,   and let  $c:[\mathcal{AR}_{1}]^{n} \rightarrow p$ be a coloring. 
Let 
 $\tilde{k}$ denote the right hand side of the inequality in (\ref{eq.upperbd}), and 
 define 
$$
D=\{ X \in \mathcal{R}:|c''[X]^{n}| \leq \tilde{k}\}.
$$ 
We will prove that $D$ is dense in $\mathcal{R}.$ Let $\{\mathfrak{s}_{k}:k<\bk(\mathcal{R},n)\}$ be the collection of isomorphism classes for members of $[\mathcal{AR}_{1}]^{n}.$ 
Let $m$ be the least natural number such that for all $A \in \mathcal{R}$ and $k \in \tilde{k}$, there is some member of the isomorphism class $\mathfrak{s}_{k}$ contained in $r_m(A)$. 
Fix $a \in \mathcal{AR}_{m}$, and linearly order the members of  $[a]^{n}$ as $\{u_{l}:l < L\}$, where each $u_l$ is considered as a sequence of structures and 
where $L$ is the number of $n$-sized subsets of $a$. 
By the ISS$^+$, 
for every $b \in \mathcal{AR}_{m}$ there is an isomorphism $\varphi_{a,b} : a\rightarrow b$. 
Note that
 $\{\varphi_{a,b}(u_{l}): l < L\}$ is an enumeration for $[b]^{n}$ preserving the structure, 
 so for each $l<L$, $u_l$ is isomorphic to $\varphi_{a,b}(u_{l})$.

Let $\mathcal{I}=\space{}^{L}p.$
 For every $\iota \in \mathcal{I},$ define $$\mathcal{F}_{\iota}=\{b \in \mathcal{AR}_{m}:(\forall l \in L) \space c(\varphi_{b}( u_{l}))=\iota(l)\}.$$ 
Let  $A \in \mathcal{R}$ be given.
 Since $\mathcal{AR}_{m}$ is a Nash-Williams family and $\mathcal{AR}_{m}=\bigcup_{\iota\in \mathcal{I}} \mathcal{F}_{\iota},$ by Theorem \ref{NashWilliams} there are  $B \leq A$ and $\iota \in \mathcal{I}$ such that $\mathcal{AR}_{m}\upharpoonright B \subseteq \mathcal{F}_{\iota}.$ 
 Therefore, for every $b \in \mathcal{AR}_{m}\upharpoonright B$ and for every $l < L,$ $c( \varphi_{b}( u_{l}))=\iota(l).$  Hence $|c''[\mathcal{AR}_{m}\upharpoonright B]^{n}| \leq L$.
Now suppose that  $i< j < L$,   $u_{i}$  and $u_{j}$ are isomorphic. 
By a) of ISS$^+$,   there exist $b,d \in \mathcal{AR}_{m}\upharpoonright B,$ $s \in [b]^{n}$ isomorphic to $u_{i}$, and $t \in [d]^{n}$ isomorphic to $u_{j}$ such that  $\varphi_{b,d}(s)=t.$ 
Therefore, $c(u_{i})=c(u_{j}).$

Thus, it remains to  count the number of isomorphism classes in $[r_{m}(\mathbb{A})]^n$.
Let $t$ be a member of $[B]^{n}$ and note that for at least one  $i \in \omega,$ the substructure  obtained by intersecting $t$ with $ B(i)$ is not empty. 
Let $q$ be the cardinality of $\{ i \in \om:t\cap B(i)\ne\emptyset\}$.
 Note that if $q=1$, then $t$ belongs to one of $\bk(\mathcal{R},n)$ different isomorphism classes. 
If $q \geq 2$, let $\{l_{i}:i< q\}$ be an increasing enumeration of those $l\in \om$ such that $t\cap B(l)\ne\emptyset$,
 and let $t_i$ denote the substructure on $t\cap B(l_i)$ inherited from $B(l_i)$. 
For each  $ i \in [1,q]$, let $j_{i}$ denote the  cardinality  of  $\mathcal{AR}_1\re (t\cap B(l_{i})).$ 
Note that $n=\sum_{1 \leq i \leq q}j_{i}$ and every $j_{i}<n,$  
and  each  $t_{i}$ belongs to one of $\bk(\mathcal{R},j_{i})$ isomorphism classes.
Hence,   $t$ belongs to one of    $\bk(\mathcal{R},j_{1})\times...\times \bk(\mathcal{R},j_{q})$  many 
equivalence classes.
Letting $q$ range from $2$ to its maximum possibility of $n$, there are 
\begin{equation}
\sum_{1< q \leq n} \ \sum_{l_{1}+...+l_{q}=n} \ \prod_{i < q} \bk(\mathcal{R},j_{i	})
\end{equation}
 different isomorphism classes, for $n$ sized substructures of $B$ that contain substructures from more than one block. 
Thus,   $|c''[B]|^{n} \leq \tilde{k}$;  hence  $B \in D.$ 
Thus, $D$ is a dense subset of $\mathcal{R}$.
\end{proof}

The following Lemma tells us that the right hand side in  equation (\ref{eq.upperbd})  is not just an upper bound but it is the Ramsey degree, which will mean that it is enough to calculate the number of different ismorphism classes of j-sized substructures of blocks of $\mathbb{A}$ to know the exact Ramsey degree.

\begin{lemma}\label{lem.lowerbd}
If $(\mathcal{R}, \leq, r)$ is a topological Ramsey space with ISS$^+$,
then 
\begin{equation}\label{eq.upperbd}
t(\mathcal{R},n) \geq \sum_{1 \leq q \leq n} \ \sum_{j_{1}+...+j_{q}=n} \ \prod_{1 \leq i \leq q} \bk(\mathcal{R},j_{i	}).
\end{equation}
\end{lemma}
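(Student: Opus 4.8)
The plan is to establish the reverse inequality by exhibiting a single coloring for which no member of $\mathcal{R}$ can reduce the number of colors below the quantity on the right-hand side of the displayed inequality, which I denote by $\tilde k$. The natural candidate is the \emph{canonical coloring by isomorphism type}: fix an enumeration $\mathfrak{s}_0,\dots,\mathfrak{s}_{N-1}$ of all isomorphism classes of members of $[\mathcal{AR}_1]^n$ that are realized inside $\mathbb{A}$, and define $c:[\mathcal{AR}_1]^n\to N$ by letting $c(s)$ be the index of the isomorphism class of $s$. By construction $c$ uses exactly $N$ colors on $[\mathbb{A}]^n$, and the whole argument reduces to two claims: that $N=\tilde k$, and that every $X\in\mathcal{R}$ realizes all $N$ isomorphism classes in $[X]^n$.

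First I would verify $N=\tilde k$. An $n$-element substructure of $\mathbb{A}$ is spread across some number $q$, with $1\le q\le n$, of distinct blocks; if $j_1,\dots,j_q$ (where $j_1+\cdots+j_q=n$ and each $j_i\ge 1$) record the number of points it meets in each occupied block, listed in increasing block order, then, because the blocks of $\mathbb{A}$ are pairwise independent and carry no cross-block relations apart from the global linear order $R_0$ which merely orders the blocks, the isomorphism type of the whole substructure is determined exactly by the ordered tuple $(\tau_1,\dots,\tau_q)$ of within-block types, where each $\tau_i$ ranges over the $\bk(\mathcal{R},j_i)$ isomorphism classes of Definition \ref{defn.k(R,n)}. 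Conversely, since each within-block type of size $j\le n$ recurs in cofinally many blocks of $\mathbb{A}$ (the blocks of the spaces with the ISS considered here are increasing, so a type realized in one block is realized in all later ones), every such tuple is realized by choosing strictly increasing block indices $m_1<\cdots<m_q$. Hence the number of realized classes is $\sum_{1\le q\le n}\sum_{j_1+\cdots+j_q=n}\prod_{1\le i\le q}\bk(\mathcal{R},j_i)=\tilde k$, exactly the count already carried out in the proof of Lemma \ref{lem.tupperbd}.

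The crux is the second claim. Here I would invoke the defining feature of the ISS, namely that every $X\in\mathcal{R}$ may be written as $\langle X(i):i<\omega\rangle$ with $X(i)\cong\mathbb{A}(i)$ for \emph{every} $i$. Given any class $\mathfrak{s}_k$, pick a witness $s'\in[\mathbb{A}]^n$ realizing it, say spread across blocks $\mathbb{A}(m_1),\dots,\mathbb{A}(m_q)$ with within-block types $\tau_1,\dots,\tau_q$. For each $p$, the isomorphism $X(m_p)\cong\mathbb{A}(m_p)$ carries the $j_p$-element subset of $\mathbb{A}(m_p)$ of type $\tau_p$ to a $j_p$-element subset of $X(m_p)$ of the same type; their union is a member $s\in[X]^n$ with the same ordered tuple of within-block types, so $s\cong s'$ and $c(s)=k$. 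Thus $c$ takes all $\tilde k$ values on $[X]^n$ for every $X\in\mathcal{R}$, whence no member of $\mathcal{R}$ witnesses fewer than $\tilde k$ colors and $t(\mathcal{R},n)\ge\tilde k$.

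The step I expect to be the main obstacle is securing that all $\tilde k$ formal type-tuples are genuinely \emph{realized} in $\mathbb{A}$, equivalently that $N=\tilde k$ rather than $N<\tilde k$: this requires each within-block type of size $j\le n$ to occur in cofinally many blocks, so that the needed increasing indices $m_1<\cdots<m_q$ can always be found. Once the index-preserving block isomorphisms $X(i)\cong\mathbb{A}(i)$ are in hand, transporting a realization in $\mathbb{A}$ to one in $X$ is routine, so the only place where genuine care is required is the combinatorial bookkeeping over the compositions $j_1+\cdots+j_q=n$.
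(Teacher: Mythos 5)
Your proposal is correct, and it rests on the same two pillars as the paper's proof: the canonical coloring by isomorphism type, and the claim that every $X\in\mathcal{R}$ realizes every isomorphism class of $[\mathcal{AR}_1]^n$ inside $[X]^n$. The genuine difference is how that second claim is justified. The paper simply invokes clause b) of ISS$^+$ (Definition \ref{defn.ISS+}), which is tailor-made for exactly this step: it asserts outright that there is an $m$ such that every $s\in[\mathcal{AR}_1]^n$ has an isomorphic copy in $[r_m(X)]^n$ for every $X\in\mathcal{R}$, so the paper's proof is three lines long. You never use b); instead you re-derive the realization property by transporting a witness $s'\subseteq\mathbb{A}$ along the block isomorphisms $X(m_p)\cong\mathbb{A}(m_p)$ supplied by the ISS itself. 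Since $\mathbb{A}$ is the maximal member, every class has a witness in $[\mathbb{A}]^n$, and since isomorphism in these spaces is exactly ``same ordered tuple of within-block types,'' the transport argument is sound; it even shows that, for the purposes of this lemma, the realization content of hypothesis b) is already a consequence of the ISS block structure, which is a small gain in self-containedness over the paper (at the cost of redoing work the hypothesis was designed to make free). The one point where you lean on something outside the stated hypotheses is the exactness count $N=\tilde k$: your argument that every formal tuple of within-block types is actually realized appeals to the blocks being increasing (``a type realized in one block is realized in all later ones''), which is a feature of the concrete spaces of Section \ref{sec:tRs&uf}, not a clause of ISS or ISS$^+$. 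To be fair, the paper is no more rigorous on this point --- it asserts that the counting in Lemma \ref{lem.tupperbd} produced exactly $\tilde k$ classes, whereas that counting literally only bounds the number of classes above by $\tilde k$ --- so this is a soft spot shared with the published proof rather than a defect particular to yours; but if you want the lemma to hold verbatim for abstract ISS$^+$ spaces, this cofinal-realization statement should be isolated as an explicit (and in all the examples easily verified) assumption rather than asserted in passing.
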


\begin{proof}
As in the previous lemma, let $\tilde{k}$ denote the right hand side of  equation (\ref{eq.upperbd}).
In the proof Lemma \ref{lem.tupperbd},
we showed that there are $\tilde{k}$ isomorphism classes for members of $[\mathcal{AR}_{1}]^{n}.$ 
Let $c:[\mathcal{AR}_{1}]^{n}\rightarrow \tilde{k}$ be a coloring such that for every $s,t \in [\mathcal{AR}_{1}]^{n},$ $c(s)=c(t)$ if and only if $s$ and $t$ belong to the same isomorphism class.
 By b) of ISS$^+$, there exists $m \in \omega$ such that for every $X \in \mathcal{R},$  and every $s \in [\mathcal{AR}_{1}]^{n}$ there is some member of $[r_{m}(X)]^{n}$ isomorphic to $s.$ Then for every $X \in \mathcal{R},$ the set $[X]^{n}$ contains members of every isomorphism class, and hence, $|c''[X]^{n}|=\tilde{k}.$
\end{proof}

\begin{theorem}\label{numbers}
Let $(\mathcal{R}, \leq, r )$ be  a topological Ramsey space with ISS$^+$.
Then 
\begin{equation}
t(\mathcal{R},n)= \sum_{1 \leq q \leq n} \, \sum_{j_{1}+...+j_{q}=n} \, \prod_{1 \leq i \leq q} \bk(\mathcal{R},j_{i	}).
\end{equation}
\end{theorem}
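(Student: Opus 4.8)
The plan is to observe that Theorem \ref{numbers} is an immediate consequence of combining the two preceding lemmas. Specifically, Lemma \ref{lem.tupperbd} establishes that
\[
t(\mathcal{R},n) \leq \sum_{1 \leq q \leq n} \ \sum_{j_{1}+...+j_{q}=n} \ \prod_{1 \leq i \leq q} \bk(\mathcal{R},j_{i}),
\]
while Lemma \ref{lem.lowerbd} establishes the reverse inequality
\[
t(\mathcal{R},n) \geq \sum_{1 \leq q \leq n} \ \sum_{j_{1}+...+j_{q}=n} \ \prod_{1 \leq i \leq q} \bk(\mathcal{R},j_{i}).
\]
Since both hold for every topological Ramsey space with the ISS$^+$ property, the two quantities coincide, yielding the stated equality. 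So the proof is essentially a one-line appeal to antisymmetry of $\leq$ on the natural numbers.

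Since the real content lives in the two lemmas, I would spend my effort making sure I understand why each inequality holds, as that is where any genuine obstacle would appear. For the upper bound (Lemma \ref{lem.tupperbd}), the key idea is a density argument: given a coloring $c:[\mathcal{AR}_{1}]^{n} \rightarrow p$, one chooses a level $m$ large enough (by b) of ISS$^+$) that every isomorphism type of an $n$-subset already appears inside $r_m$ of every member, partitions $\mathcal{AR}_m$ according to the tuple of colors assigned to the fixed enumeration $\{u_l : l < L\}$ of $[a]^n$, and applies the Abstract Nash-Williams Theorem (Theorem \ref{NashWilliams}) to find $B \leq A$ homogeneous for this partition. Part a) of ISS$^+$ then forces isomorphic $u_i, u_j$ to receive the same color, so the number of colors on $[B]^n$ is bounded by the number of isomorphism classes, which is exactly the combinatorial sum counting how an $n$-set can distribute across blocks.

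For the lower bound (Lemma \ref{lem.lowerbd}), one simply colors each $n$-subset by its isomorphism class; part b) of ISS$^+$ guarantees every class is realized inside $r_m(X)$ for a uniform $m$, so no member $X$ can avoid any color, forcing at least $\tilde{k}$ colors on every $[X]^n$. The main conceptual obstacle—already resolved in the lemmas—is verifying the count of isomorphism classes: that an $n$-element substructure of $B$ splitting across $q$ blocks, with $j_i$ elements in the $i$-th occupied block, contributes exactly $\prod_{1 \leq i \leq q}\bk(\mathcal{R},j_i)$ classes by independence of the blocks, and that summing over all compositions $j_1 + \cdots + j_q = n$ and all $1 \leq q \leq n$ gives $\tilde{k}$. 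Assuming the lemmas, however, the theorem itself presents no further difficulty, and I would present it as a direct corollary of Lemmas \ref{lem.tupperbd} and \ref{lem.lowerbd}.
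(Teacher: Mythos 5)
Your proposal is correct and takes exactly the same route as the paper: the paper's own proof of Theorem \ref{numbers} is precisely the one-line combination of Lemma \ref{lem.tupperbd} (the upper bound) and Lemma \ref{lem.lowerbd} (the lower bound). Your supplementary sketches of the two lemmas' proofs---the Nash-Williams density argument for the upper bound and the isomorphism-class coloring for the lower bound---also faithfully match the arguments given in the paper.
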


\begin{proof}
This follows from Lemmas \ref{lem.tupperbd} and \ref{lem.lowerbd}.
\end{proof}


\subsection{Calculations of Ramsey degrees of  ultrafilters from spaces with the ISS$^+$}\label{subsec.5.2}

In this subsection, we will calculate the exact Ramsey degrees for several classes of   ultrafilters  which are forced by spaces with the ISS$^+$.
First, we will use Theorem \ref{numbers} to provide a streamlined calculation of the  Ramsey degrees  for the weakly Ramsey ultrafilters forced by Laflamme's forcing $\mathbb{P}_1$.
Indeed, Laflamme calculated these in Theorem 1.10 of \cite{Laflamme}
via a  three-page proof which shows its three-way equivalence  with a combinatorial property that $\mathbb{P}_1$ is naturally seen to possess, and an interesting
Ramsey property for analytic subsets of the Baire space  in terms of the forcing $\mathbb{P}_1$ remeniscent of work of Mathias and Blass for Ramsey ultrafilters.
The proof we present here is direct and short.

For understanding the following proof, 
first notice that $\mathcal{AR}_1$ consists of all 
single maximal branches in the tree $\mathbb{T}_1$, 
that is, a set of the form $\{\lgl\rgl,\lgl i\rgl, \lgl i,j\rgl\}$, where $i\in \om$ and $j\le i$.
Note that   given  $n \in \omega$ fixed,
 every two members of $\mathcal{AR}_{n}$ are isomorphic as subtrees of   $\mathbb{T}_{1}$. 
This is because if $a,b \in \mathcal{AR}_{n},$ 
then there is an isomorphism $\varphi_{a,b}:a\rightarrow b$ which  sends each  node of the tree $a$ to the node in the same position of the tree $b$. 
The following two figures show  members, $a$ and $b$,  of $\mathcal{AR}_{5}$.
The isomorphism $\varphi_{a,b}$ for these finite trees sends $\langle0,0\rangle$ to $\langle 20,15\rangle,$ $\langle 1,0\rangle$ to $\langle 30,23\rangle,$ $\langle 1,1\rangle$ to $\langle 30,28\rangle,$ $\langle 2,0\rangle$ to $\langle 50,48\rangle,$ etc.

	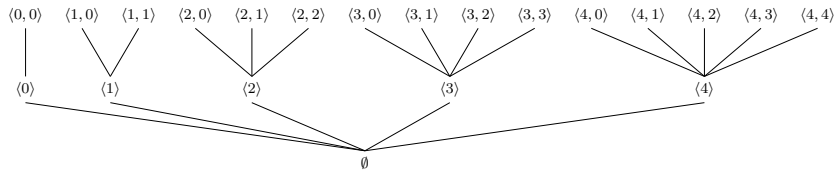
\begin{figure}[H]
		\centering
		{\footnotesize
			\begin{tikzpicture}[scale=.7,grow'=up, level distance=40pt,sibling distance=.2cm]
			\tikzset{grow'=up}
			\Tree [.$\emptyset$ [.$\langle0\rangle$ [.$\langle0,0\rangle$ ]] [.$\langle1\rangle$ [.$\langle1,0\rangle$ ][.$\langle1,1\rangle$ ] ] [.$\langle2\rangle$ [.$\langle2,0\rangle$ ] [.$\langle2,1\rangle$ ][.$\langle2,2\rangle$ ] ] [.$\langle3\rangle$ [.$\langle3,0\rangle$ ]  [.$\langle3,1\rangle$ ] [.$\langle3,2\rangle$ ] [.$\langle3,3\rangle$ ] ] [.$\langle4\rangle$ [.$\langle4,0\rangle$ ] [.$\langle4,1\rangle$ ] [.$\langle4,2\rangle$ ] [.$\langle4,3\rangle$ ][.$\langle4,4\rangle$ ] ]]
			\end{tikzpicture}
		}
		\caption{$a=r_{5}(\mathbb{T}_{1})$}
	\end{figure}

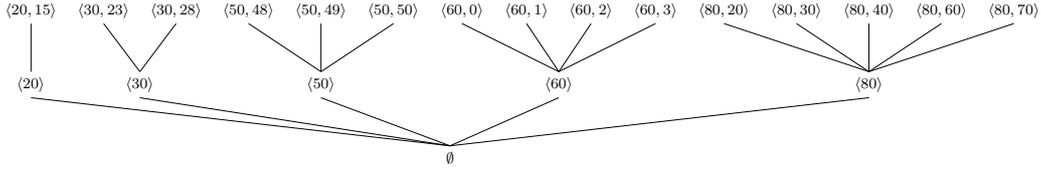
\begin{figure}[H]
	\centering
	{\footnotesize
		\begin{tikzpicture}[scale=.7,grow'=up, level distance=40pt,sibling distance=.2cm]
		\tikzset{grow'=up}
		\Tree [.$\emptyset$ [.$\langle20\rangle$ [.$\langle20,15\rangle$ ]] [.$\langle30\rangle$ [.$\langle30,23\rangle$ ][.$\langle30,28\rangle$ ] ] [.$\langle50\rangle$ [.$\langle50,48\rangle$ ] [.$\langle50,49\rangle$ ][.$\langle50,50\rangle$ ] ] [.$\langle60\rangle$ [.$\langle60,0\rangle$ ]  [.$\langle60,1\rangle$ ] [.$\langle60,2\rangle$ ] [.$\langle60,3\rangle$ ] ] [.$\langle80\rangle$ [.$\langle80,20\rangle$ ] [.$\langle80,30\rangle$ ][.$\langle80,40\rangle$ ] [.$\langle80,60\rangle$ ][.$\langle80,70\rangle$ ] ]]
		\end{tikzpicture}
	}
	\caption{Another member, $b$,  of $\mathcal{AR}_{5}$}
\end{figure}

Letting $b$ denote the member of $\mathcal{AR}_5$ in   Figure 5, note that $b(0)=\{\lgl\rgl, \lgl 20\rgl, \lgl 20,15\rgl\}$,
$b(1)=\{\lgl\rgl,\lgl 30\rgl,\lgl 30,23\rgl,\lgl30,28\rgl\}$, and so forth.

\begin{definition}
For every $n \geq 2,$ let $S_{n}=\{ x \in \space{}^{q}n:q \in [1,n],$ $\sum_{i<q} x(i)=n$ and $\forall i \in q(x(i) \neq 0) \}.$
\end{definition}

\begin{lemma}\label{Sn}
If $n \geq 2,$ then $|S_{n}|=\sum_{p < n}{n-1\choose p}=2^{n-1}.$
\end{lemma}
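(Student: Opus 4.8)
The plan is to recognize $S_n$ as the set of \emph{compositions} of $n$, that is, the finite sequences of positive integers summing to $n$, and then to count these by the number of parts. Once this identification is made, the statement reduces to a standard binomial identity.

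First I would observe that an element $x \in S_n$ is precisely a finite sequence $\langle x(0),\dots,x(q-1)\rangle$ of some length $q \in [1,n]$, consisting of positive integers (the condition $x(i)\neq 0$) whose sum is $n$; in other words, a composition of $n$ into $q$ parts. I would then partition $S_n$ according to the length $q$, writing $S_n = \bigsqcup_{q=1}^{n} S_n^{(q)}$, where $S_n^{(q)}$ denotes the set of those $x \in S_n$ of length exactly $q$.

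The key step is to count $|S_n^{(q)}|$ via the classical ``stars and bars'' bijection. Picture $n$ as a row of $n$ units with $n-1$ gaps between consecutive units, and associate to each composition of $n$ into $q$ parts the choice of which $q-1$ of these gaps to cut at. This gives a bijection between $S_n^{(q)}$ and the $(q-1)$-element subsets of an $(n-1)$-element set, so that $|S_n^{(q)}| = \binom{n-1}{q-1}$. Summing over $q$ and reindexing with $p = q-1$ yields
$$|S_n| = \sum_{q=1}^{n} \binom{n-1}{q-1} = \sum_{p < n} \binom{n-1}{p}.$$

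Finally, the right-hand side is evaluated by the binomial theorem, $\sum_{p=0}^{n-1}\binom{n-1}{p} = (1+1)^{n-1} = 2^{n-1}$, giving both equalities at once. The only point requiring genuine care is the verification that the stars-and-bars correspondence is a genuine bijection (both injective and surjective onto the $(q-1)$-subsets of the gaps), but this is entirely routine; everything else is bookkeeping together with a direct appeal to the binomial theorem.
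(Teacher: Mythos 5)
Your proposal is correct and takes essentially the same approach as the paper: the stars-and-bars correspondence you describe is precisely the paper's map $\psi$, which sends a composition $x$ to the set of shifted partial sums $\{x(0)-1,\, x(0)+x(1)-1,\dots\}$, i.e.\ to the set of cut positions, and both arguments finish with the identity $\sum_{p<n}\binom{n-1}{p}=2^{n-1}$. The only cosmetic difference is that you stratify by the number of parts $q$ before invoking the bijection, while the paper exhibits a single bijection $\psi\colon S_n \to \mathcal{P}(n-1)$ and reads off the count.
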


\begin{proof}
For $q \in [1,n]$ and $x \in \space{}^{q}n$ satisfying 
$\sum_{i<q} x=n$ and for every $i \in q,$ $x(i) \neq 0,$ 
let 
\begin{equation}
\psi(x)=\{ x(0)-1,x(0)+x(1)-1,...,\sum_{i<q-1}x(i)-1\}.
\end{equation}
Since every $x(i)\neq 0,$ $\psi(x)$ is a subset of $n-1$, so 
$\psi(x)$ is a member of $[n-1]^{q-1}.$ 
Notice that 
since $\sum_{i<q} x(i)=n$,
it follows that $x(q-1)=n-\sum_{i<q-1} x(i)$ is determined.

Note that the map $\psi:S_{n}\rightarrow \mathcal{P}(n-1)$ is one-to-one. 
Actually, $\psi$ is also an onto map. For every $p <n$ and $\{m_{0},...,m_{p-1}\} \in [n-1]^{p},$ a subset of $n-1$ with an increasing enumeration, $$\{m_{0},...,m_{p-1}\} = \psi (\langle m_{0}+1,m_{1}-m_{0},...,m_{p-1}-m_{p-2},n-1-m_{p-1}\rangle) $$ with $$\langle m_{0}+1,m_{1}-m_{0},...,m_{p-1}-m_{p-2},n-1-m_{p-1} \rangle \in (\space{}^{p+1}n) \cap S_{n}.$$ 
Then, $$|S_{n}|=\sum_{p< n}|[n-1]^{p}|=\sum_{p < n} \, {n-1 \choose p}  =2^{n-1}.$$
\end{proof}

\begin{corollary}\label{R1n}
	Let $\mathcal{U}_{1}$ be the weakly Ramsey  ultrafilter
forced by Laflamme's forcing $\mathbb{P}_1$,
equivalently, by  $(\mathcal{R}_{1}, \leq^{\ast})$.	 Then for each $n\ge 1$,   $t(\mathcal{U}_{1},n) = 2^{n-1} $.	
\end{corollary}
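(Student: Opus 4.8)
The plan is to apply Theorem \ref{numbers} directly to the space $\mathcal{R}_1$, so that everything reduces to computing the quantities $\bk(\mathcal{R}_1,j)$ and then recognizing the resulting sum. First I would record that $\mathcal{R}_1$ satisfies the ISS$^+$, as already asserted at the start of Section \ref{sec:degrees} and following the framework of Subsection \ref{subsec.5.1}: its members are independent sequences of blocks, the $i$-th block being (an isomorphic copy of) the finite tree $\mathbb{T}_1(i)$, with $R_0$ the linear order on branches induced by the leaf index. In particular Lemma \ref{fact.Important} gives $t(\mathcal{U}_1,n)=t(\mathcal{R}_1,n)$, so it suffices to compute $t(\mathcal{R}_1,n)$.

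The key computation is that $\bk(\mathcal{R}_1,n)=1$ for every $n\ge 1$. Recall from Definition \ref{defn.k(R,n)} that $\bk(\mathcal{R}_1,n)$ counts the isomorphism classes of those $b\in[\mathcal{AR}_1]^n$ all of whose members lie in a single block $\mathbb{A}(i)=\mathbb{T}_1(i)$. Since every member of $\mathcal{AR}_1$ is a single maximal branch, and all branches within one block share the common node $\langle i\rangle$ at level one, any $n$ branches drawn from a single block form the same ordered substructure: a linearly ordered set of $n$ leaves hanging off one common middle node. Hence all such $b$ are mutually isomorphic, giving exactly one isomorphism class and so $\bk(\mathcal{R}_1,n)=1$.

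Substituting $\bk(\mathcal{R}_1,j_i)=1$ into Theorem \ref{numbers} collapses every product to $1$, leaving
\[
t(\mathcal{R}_1,n)=\sum_{1\le q\le n}\ \sum_{j_1+\cdots+j_q=n} 1,
\]
which is precisely the number of ways to write $n$ as an ordered sum of positive integers, i.e. $|S_n|$ in the notation of Lemma \ref{Sn}. For $n=1$ this equals $1=2^0$, and for $n\ge 2$ Lemma \ref{Sn} gives $|S_n|=2^{n-1}$; combining with $t(\mathcal{U}_1,n)=t(\mathcal{R}_1,n)$ yields $t(\mathcal{U}_1,n)=2^{n-1}$. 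The only genuine content is the identity $\bk(\mathcal{R}_1,n)=1$; once the general machinery of Theorem \ref{numbers} and the counting identity of Lemma \ref{Sn} are invoked, the remaining steps are purely formal. The main point to verify carefully is that the symmetry of the branches within a single block of $\mathbb{T}_1$ truly forces a unique isomorphism type, which is where the specific shape of Laflamme's tree enters and distinguishes this computation from spaces whose blocks carry finer structure.
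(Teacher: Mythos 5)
Your proposal is correct and follows essentially the same route as the paper's own proof: verify ISS$^+$ for $\mathcal{R}_1$, observe that $\bk(\mathcal{R}_1,j)=1$ for all $j$ because any $j$ branches inside a single block of $\mathbb{T}_1$ form a unique isomorphism type, and then apply Theorem \ref{numbers} together with Lemma \ref{Sn} to identify the sum as $|S_n|=2^{n-1}$. Your added details (the explicit appeal to Lemma \ref{fact.Important} and the common-middle-node justification for the single isomorphism class) are correct refinements of what the paper leaves implicit.
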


\begin{proof}
	Fix $n\ge 1$. 
	First, note that the space $\mathcal{R}_{1}$ satisfies ISS$^+$.
	 By Theorem \ref{numbers} we have $t(\mathcal{U}_{1},n)= \sum_{1 \leq q \leq n} \sum_{j_{1}+...+j_{q}=n} \prod_{1 \leq i \leq q} \bk(\mathcal{R}_{1},j_{i	}).$ 
	 Fix $1\le j\leq n$. 
	Given any  $j$-sized subsets   $s,t$   of $\mathcal{AR}_1$ 
	 such that $s \subset \mathbb{T}_1(i)$
	 and $t \subset \mathbb{T}_1(m)$
	  for some $i,m$, then $s$  and $t$ are  isomorphic.
	 Then for every $j \in [1,n],$ $\bk(\mathcal{R}_{1},j)=1.$ Therefore 
\begin{equation}
t(\mathcal{U}_{1},n)= \sum_{1 \leq q \leq n}\  \sum_{j_{1}+...+j_{q}=n}\  \prod_{1 \leq i \leq q} 1 =|S_{n}|=2^{n-1}.
\end{equation}
  Therefore $t(\mathcal{U}_{1},n) = 2^{n-1} .$
\end{proof}

Since $t(\mathcal{U}_{1},2) = 2, $ for every $c:[\mathcal{AR}_{1}]^{2} \rightarrow 3$ coloring there is an $X \in \mathcal{U}_{1}$ such that $|c\upharpoonright [X]^{2}| \leq 2$. If we indentify $\mathcal{AR}_{1}$ with $\omega$ then $\mathcal{U}_{1}$ is a weakly Ramsey ultrafilter in the sense of $\omega$. 
By Lemma 3, there is a coloring $c:[\mathcal{AR}_{1}]^{2} \rightarrow 2$ such that for every $X \in \mathcal{R}_{1}$, $|c\upharpoonright [X]^{2}| = 2$. 
Thus, we can see in a simple way that $\mathcal{U}_{1}$ is not a Ramsey ultrafilter.

Next, we will calculate Ramsey degrees for 
ultrafilters  $\mathcal{U}_k$ forced by Laflamme's forcings $\mathbb{P}_k$ from \cite{Laflamme}, $k\ge 2$.
As noted in the previous secion, the topological Ramsey 
space $\mathcal{R}_{k}$  forces the same ultrafilter as $\mathbb{P}_k$.  
The Ramsey degrees for $\mathcal{U}_k$  
are stated  in Theorem 2.2 of 
\cite{Laflamme}, but 
a concrete proof does not appear in that paper.
 Rather, Laflamme points out 
 that the proof entirely similar to, but combinatorially more complicated than  that of Theorem 1.10  in \cite{Laflamme}.
Here,  we present a straightforward 
proof based on the  Ramsey structure of $\mathcal{R}_{k}$.  
For the following proof, the set of first approximations $\{r_1(A):A\in\mathcal{R}_k\}$ are identified with the maximal  nodes of $\mathbb{T}_{k}$;  this also applies for every member of $\mathcal{R}_{k}$.
The downward closure of any maximal node in $\mathbb{T}_k$ recovers the tree structure below that node, so it suffices to work with the maximal  nodes in $\mathbb{T}_k$.

Recall Definition \ref{defn.k(R,n)} of $\bk(\mathcal{R},n)$ for a topological Ramsey space $\mathcal{R}$.
The next lemma uses the inductive nature of the construction of $\mathcal{R}_{k+1}$ from $\mathcal{R}_k$ to   show that each $\bk(\mathcal{R}_{k+1},n)$ can be deduced from the Ramsey degrees of $\mathcal{U}_k$.

\begin{lemma}\label{lem.inductionR_k}
For any  $k,n \ge 1$,  we have that
$\bk(\mathcal{R}_{k+1},n)=t(\mathcal{U}_{k},n).$
\end{lemma}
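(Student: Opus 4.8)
The plan is to reduce both sides of the identity to a single count of isomorphism classes of $n$-element sets of maximal nodes, and then to match the two counts through the inductive construction of $\mathbb{T}_{k+1}$ out of $\mathbb{T}_k$. For the right-hand side, recall that Lemma \ref{fact.Important} gives $t(\mathcal{U}_k,n)=t(\mathcal{R}_k,n)$, and that the proofs of Lemmas \ref{lem.tupperbd} and \ref{lem.lowerbd} actually identify $t(\mathcal{R}_k,n)$ with the \emph{total} number of isomorphism classes of members of $[\mathcal{AR}_1]^n$ for the space $\mathcal{R}_k$ — that is, $n$-subsets of maximal nodes of $\mathbb{T}_k$ that may spread across arbitrarily many blocks. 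For the left-hand side, $\bk(\mathcal{R}_{k+1},n)$ is by Definition \ref{defn.k(R,n)} the number of isomorphism classes of $n$-subsets of maximal nodes that lie inside a single block $\mathbb{T}_{k+1}(i)=\mathbb{A}(i)$. So it suffices to produce an isomorphism-type-preserving bijection between $n$-subsets confined to one block of $\mathbb{T}_{k+1}$ and arbitrary $n$-subsets of $\mathbb{T}_k$.

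First I would make explicit the inductive structure that Subsection \ref{subsec4.1} only sketches: a single block $\mathbb{T}_{k+1}(i)$ is obtained from the finite approximation $r_{i+1}(\mathbb{T}_k)=\bigcup_{j\le i}\mathbb{T}_k(j)$ by identifying the roots of its first $i+1$ blocks $\mathbb{T}_k(0),\dots,\mathbb{T}_k(i)$ into one new root placed one level higher. Equivalently, the root of $\mathbb{T}_{k+1}(i)$ has $i+1$ immediate successors, and the subtree hanging below the $j$-th successor is a copy of $\mathbb{T}_k(j)$. (One checks this is the recursion producing $\mathbb{T}_2$ from $\mathbb{T}_1$, $\mathbb{T}_3$ from $\mathbb{T}_2$, and so on, underlying the spaces $\mathcal{R}_k$ of \cite{Rn}.) This yields a bijection between the maximal nodes of $\mathbb{T}_{k+1}(i)$ and those of $r_{i+1}(\mathbb{T}_k)$: a leaf sitting below the $j$-th successor of the root corresponds to the leaf in the same position inside block $\mathbb{T}_k(j)$.

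Next I would verify that this bijection respects the ISS notion of isomorphism. The point is that isomorphism types here are computed block-by-block: by Definition \ref{defn.ISS}, two $n$-subsets are isomorphic exactly when, after grouping their nodes by the block each node occupies and listing the groups in the linear order $R_0$, the corresponding groups are pairwise isomorphic. Under the identification above, ``the block of $\mathbb{T}_k$ in which a node lives'' corresponds precisely to ``the immediate successor of the root of $\mathbb{T}_{k+1}(i)$ below which the node sits,'' and the subtree recursion guarantees that the type of a group inside $\mathbb{T}_k(j)$ is unchanged when that block is hung below a successor of the new root; spreading across several blocks of $\mathbb{T}_k$ thus becomes spreading across several successors within one block of $\mathbb{T}_{k+1}$. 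Hence an $n$-subset of leaves of $\mathbb{T}_{k+1}(i)$ and its image in $r_{i+1}(\mathbb{T}_k)\subseteq\mathbb{T}_k$ carry the same isomorphism type, giving $\bk(\mathcal{R}_{k+1},n)\le t(\mathcal{R}_k,n)$. For the reverse inequality I would invoke clause (b) of ISS$^+$ for $\mathcal{R}_k$: there is an $m$ such that every isomorphism type of an $n$-subset already appears in $r_m(\mathbb{T}_k)$, so taking any $i\ge m-1$ makes $\mathbb{T}_{k+1}(i)$ realize all global types at once. Combining the two inclusions gives $\bk(\mathcal{R}_{k+1},n)=t(\mathcal{R}_k,n)=t(\mathcal{U}_k,n)$.

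The main obstacle I expect is the first step: pinning down and justifying the precise inductive recipe for $\mathbb{T}_{k+1}$ in terms of $\mathbb{T}_k$ (``glue the roots of the first $i+1$ blocks''), since the paper states the explicit definition only for $\mathbb{T}_1$ and defers the general case to \cite{Rn}. Everything after that is bookkeeping on how the recursively defined ISS isomorphism relation interacts with this gluing; the genuinely load-bearing claim is that passing from $\mathbb{T}_k$ to $\mathbb{T}_{k+1}$ turns ``a block of $\mathbb{T}_k$'' into ``a subtree below one child of a block's root,'' which is exactly what converts the global count for $\mathcal{R}_k$ into the single-block count for $\mathcal{R}_{k+1}$.
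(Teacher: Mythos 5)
Your proposal is correct and takes essentially the same route as the paper: the paper's (much terser) proof simply strips the first coordinate $\lgl l\rgl$ from each maximal node of a single block $\mathbb{T}_{k+1}(l)$, turning an $n$-subset of that block into an $n$-subset of $\mathbb{T}_k$, and concludes immediately from the fact that $[\mathbb{T}_k]^{n}$ has exactly $t(\mathcal{U}_k,n)$ isomorphism classes. Your write-up fills in precisely the steps the paper leaves implicit --- that $t(\mathcal{U}_k,n)=t(\mathcal{R}_k,n)$ counts \emph{all} isomorphism classes (via the proofs of the upper- and lower-bound lemmas), that the coordinate-stripping correspondence preserves isomorphism types, and that every type of $[\mathbb{T}_k]^n$ is realized inside a single sufficiently late block, by clause (b) of ISS$^+$ --- so it is the same argument, carried out in full detail.
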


\begin{proof}
Let $s \in [\mathbb{T}_{k+1}]^{n}$ be such that $s \subset \mathbb{T}_{k+1}(l)$ for some $l \in \omega$.
Recall our convention that $s$ is a collection of maximal nodes in $\mathbb{T}_{k+1}$, so each node in $s$ is a sequence of length $k+2$.
 Note that since $s$ is contained in $\mathbb{T}_{k+1}(l)$, 
each  node in $s$ end-extends the sequence $\lgl l\rgl$. 
Let 
$t$ be  the set of sequences resulting by taking out the first member of every sequence in $s$; thus, letting ${}^{k+1}\om$ denote the set of sequences of natural numbers of  length $k+1$,  
\begin{equation}
t=\{ x\in {}^{k+1}\om: \lgl l\rgl^{\frown} x\in s \}.
\end{equation}
Then $t$ is an $n$-sized subset of $\mathbb{T}_{k}$.
  Since there are $t(\mathcal{U}_{k},n)$ isomorphism classes for $[\mathbb{T}_{k}]^{n},$ $\bk(\mathcal{R}_{k+1},n)=t(\mathcal{U}_{k},n).$
\end{proof}

\begin{lemma}\label{lem.RdegcalR_k}
Let $k\ge 1$ be given, and suppose that  
$\mathcal{U}_{k+1}$ is an $(\mathcal{R}_{k+1},\leq^{\ast})$-generic filter.
Then for each $n\ge 1$,
\begin{equation}
t(\mathcal{U}_{k+1},n)= 
\sum_{1 \leq q \leq n}\  \sum_{j_{1}+...+j_{q}=n}\  \prod_{1 \leq i \leq q} t(\mathcal{U}_{k},j_{i	}).
\end{equation}
\end{lemma}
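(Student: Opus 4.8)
The plan is to combine Theorem~\ref{numbers} with Lemma~\ref{lem.inductionR_k}. First I would observe that $\mathcal{R}_{k+1}$ satisfies the ISS$^+$ hypotheses: its members are independent sequences of the finite structures $\mathbb{T}_{k+1}(l)$ (the blocks are mutually disjoint as subtrees of $\mathbb{T}_{k+1}$ with no relations between them), the first approximations $\mathcal{AR}_1$ are singletons (maximal branches), and the two technical conditions a) and b) of Definition~\ref{defn.ISS+} hold by the same homogeneity of blocks used in the $\mathcal{R}_1$ case. Once this is checked, Theorem~\ref{numbers} applies directly to give
\begin{equation}
t(\mathcal{U}_{k+1},n)=t(\mathcal{R}_{k+1},n)= \sum_{1 \leq q \leq n}\ \sum_{j_{1}+...+j_{q}=n}\ \prod_{1 \leq i \leq q} \bk(\mathcal{R}_{k+1},j_{i}),
\end{equation}
where the first equality is Lemma~\ref{fact.Important}.

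The remaining step is purely a substitution: Lemma~\ref{lem.inductionR_k} states that $\bk(\mathcal{R}_{k+1},j)=t(\mathcal{U}_{k},j)$ for every $j\ge 1$. Replacing each factor $\bk(\mathcal{R}_{k+1},j_{i})$ in the product by $t(\mathcal{U}_{k},j_{i})$ yields exactly the claimed formula
\begin{equation}
t(\mathcal{U}_{k+1},n)= \sum_{1 \leq q \leq n}\ \sum_{j_{1}+...+j_{q}=n}\ \prod_{1 \leq i \leq q} t(\mathcal{U}_{k},j_{i}).
\end{equation}
So the proof is essentially a two-line deduction once the prior lemmas are in place.

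The main conceptual content has therefore been front-loaded into the earlier lemmas rather than this one, so the only genuine obstacle is verifying that $\mathcal{R}_{k+1}$ really is an ISS$^+$ space so that Theorem~\ref{numbers} is applicable. I expect that the structure of $\mathcal{R}_{k+1}$ as a sequence of isomorphic finite-tree blocks makes this routine, paralleling the remarks already made for $\mathcal{R}_1$; the unique isomorphism $\varphi_{a,b}$ between approximations exists because $R_0$ (the linear order on branches) rigidifies each block, and condition b) holds because every fixed isomorphism type of $n$-element substructure appears within a bounded initial segment $r_m(X)$. Given these verifications, the statement follows immediately from the two cited results, and no separate induction or combinatorial counting is needed at this stage—the recursion is the theorem's content, and the base case $t(\mathcal{U}_1,n)=2^{n-1}$ was handled in Corollary~\ref{R1n}.
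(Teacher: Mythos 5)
Your proposal is correct and follows exactly the paper's own argument: the paper's proof is the same two-step deduction, citing Theorem~\ref{numbers} and substituting via Lemma~\ref{lem.inductionR_k}. Your additional care in spelling out why $\mathcal{R}_{k+1}$ satisfies the ISS$^+$ (block homogeneity giving the isomorphisms $\varphi_{a,b}$, and boundedness of isomorphism types in $r_m(X)$) is a verification the paper leaves implicit, but it does not constitute a different route.
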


\begin{proof}
This follows from 
Theorem \ref{numbers} and 
Lemma \ref{lem.inductionR_k}.
\end{proof}

\begin{theorem}\label{thm.RamseydegLaflamme}
Given $k,n\ge 1$,
   if $\mathcal{U}_{k}$ an ultrafilter forced by Laflamme's $\mathbb{P}_k$, or equivalently by 
    $(\mathcal{R}_{k},\leq^{\ast})$, then
 $t(\mathcal{U}_{k},n)= (k+1)^{n-1}$.
\end{theorem}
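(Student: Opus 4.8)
The plan is to proceed by induction on $k$, with the recursion in Lemma \ref{lem.RdegcalR_k} doing all the structural work. The base case $k=1$ is precisely Corollary \ref{R1n}, which gives $t(\mathcal{U}_1,n)=2^{n-1}=(1+1)^{n-1}$ for every $n\ge 1$, so the claim holds when $k=1$.

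For the inductive step I would assume that $t(\mathcal{U}_k,j)=(k+1)^{j-1}$ holds for all $j\ge 1$, and then substitute this into the recursion of Lemma \ref{lem.RdegcalR_k}. The key simplification is that each summand telescopes: since every composition in the inner sum satisfies $j_1+\dots+j_q=n$ with each $j_i\ge 1$, we have $\prod_{1\le i\le q}(k+1)^{j_i-1}=(k+1)^{n-q}$, which depends only on the number of parts $q$ and not on the particular composition.

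Next I would count, for each fixed $q$, the number of compositions $(j_1,\dots,j_q)$ of $n$ into $q$ positive parts, which is exactly $\binom{n-1}{q-1}$ --- the same elementary count that underlies Lemma \ref{Sn}. This collapses the double sum to $\sum_{q=1}^{n}\binom{n-1}{q-1}(k+1)^{n-q}$, and re-indexing by $p=q-1$ turns it into $\sum_{p=0}^{n-1}\binom{n-1}{p}(k+1)^{n-1-p}$, which the binomial theorem identifies with $\bigl((k+1)+1\bigr)^{n-1}=(k+2)^{n-1}$. Hence $t(\mathcal{U}_{k+1},n)=(k+2)^{n-1}$, completing the induction and establishing $t(\mathcal{U}_k,n)=(k+1)^{n-1}$ for all $k,n\ge 1$.

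I do not anticipate a genuine obstacle here, since the combinatorial content has been front-loaded into Lemma \ref{lem.RdegcalR_k} (itself resting on Theorem \ref{numbers} together with the inductive construction of $\mathcal{R}_{k+1}$ from $\mathcal{R}_k$ via Lemma \ref{lem.inductionR_k}); what remains is a single clean binomial identity. The only place demanding mild care is pinning down the count of positive compositions and aligning the summation indices so that the binomial theorem applies verbatim.
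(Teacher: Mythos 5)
Your proposal is correct and follows essentially the same route as the paper's own proof: induction on $k$ with base case Corollary \ref{R1n}, substitution of the inductive hypothesis into the recursion of Lemma \ref{lem.RdegcalR_k}, collapsing each product to $(k+1)^{n-q}$, counting compositions via the $\binom{n-1}{q-1}$ argument from Lemma \ref{Sn}, and finishing with the binomial theorem to obtain $(k+2)^{n-1}$. There is nothing to add.
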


\begin{proof}
The proof is by induction on $k$ over all $n\ge 1$.
The case when $k=1$ is done by Corollary \ref{R1n}. Now we assume the conclusion for  a fixed $k\ge 1$ and prove it for $k+1$. 
By the Lemma \ref{lem.RdegcalR_k}, 
\begin{equation}
t(\mathcal{U}_{k+1},n)= \sum_{1 \leq q \leq n} \, \sum_{j_{1}+...+j_{q}=n} \, \prod_{1 \leq i \leq q} t(\mathcal{U}_{k},j_{i	}).
\end{equation}
 By inductive hypothesis $t(\mathcal{U}_{k},j_{i	})=(k+1)^{j_{i}-1}$.
  Then
\begin{equation}
t(\mathcal{U}_{k+1},n)= \sum_{1 \leq q \leq n} \ \sum_{j_{1}+...+j_{q}=n} \ \prod_{1 \leq i \leq q}(k+1)^{j_{i}-1}=\sum_{1 \leq q \leq n} \ \sum_{j_{1}+...+j_{q}=n}(k+1)^{n-q} .
\end{equation} 
By the proof of Lemma \ref{Sn},  
\begin{equation}\label{eq.13}
\sum_{1 \leq q \leq n} \
 \sum_{j_{1}+...+j_{q}=n} (k+1)^{n-q} =\sum_{1 \leq q \leq n}  {n-1 \choose q-1} (k+1)^{n-q}=\sum_{0 \leq p \leq n-1}  {n-1 \choose p}  (k+1)^{n-1-p}. 
\end{equation}
 By Newton's Theorem,
 the right hand side of equation (\ref{eq.13}) equals 
  $((k+1)+1)^{n-1}$.
  Therefore $t( \mathcal{U}_{k+1},n)= (k+2)^{n-1}.$
\end{proof}

Next, we calculate the Ramsey degree for pairs for the ultrafilters forced by Blass' $n$-square forcing and more generally, the hypercube Ramsey spaces $\mathcal{H}^n$.

\begin{corollary}
Let $\mathcal{V}_n$ be an $(\mathcal{H}^n,\leq^{\ast})$-generic filter. Then 
$$
t(\mathcal{V}_n,2) = 1+ \sum_{i=0}^{n-1} 3^i.
$$
In particular, $t(\mathcal{V}_2,2) = 5$, where $\mathcal{V}_2$ is the ultrafilter generated by Blass' $n$-square forcing.
\end{corollary}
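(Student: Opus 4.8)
The plan is to reduce everything to Theorem \ref{numbers} and then count isomorphism types of pairs inside a single block. Since each $\mathcal{K}_j$ ($j<n$) is the \Fraisse\ class of finite linear orders with the Ramsey property, the hypercube space $\mathcal{H}^n$ is one of the spaces from \cite{fraisseClasses}, and as noted in Section \ref{sec:degrees} such spaces satisfy the ISS$^+$: the lexicographic order on the product $\prod_{j<n}\mathbf{A}_{k,j}$ serves as the rigidifying linear order $R_0$, the coordinate orders furnish the remaining relations, and conditions a) and b) of Definition \ref{defn.ISS+} hold because isomorphism types of finite configurations occur cofinally within any member. By Lemma \ref{fact.Important}, $t(\mathcal{V}_n,2)=t(\mathcal{H}^n,2)$, and applying Theorem \ref{numbers} with $n=2$,
\begin{equation}
t(\mathcal{H}^n,2)=\bk(\mathcal{H}^n,2)+\bk(\mathcal{H}^n,1)^2,
\end{equation}
since the outer sum retains only the terms $q=1$ (with $j_1=2$) and $q=2$ (with $j_1=j_2=1$).

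Next I would evaluate the two $\bk$ numbers. Because the members of $\mathcal{AR}_1$ are singletons, there is exactly one isomorphism type of a one-element substructure, so $\bk(\mathcal{H}^n,1)=1$ and the second summand contributes $1$. It remains to compute $\bk(\mathcal{H}^n,2)$, which by Definition \ref{defn.k(R,n)} is the number of isomorphism types of two-element substructures $\{x,y\}$ lying inside a single block $\mathbb{A}(i)=\prod_{j<n}\mathbf{A}_{i,j}$. A point here is an $n$-tuple $x=(x_0,\dots,x_{n-1})$, and the structure on a pair is recorded by the relation of $x_j$ to $y_j$ in each coordinate order.

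The hard part will be counting these isomorphism types, and this is where I would be most careful about the role of $R_0$. For an ordered pair $(x,y)$ of distinct points, the isomorphism type is determined by the comparison pattern $(c_0,\dots,c_{n-1})$ with each $c_j\in\{<,=,>\}$, and distinctness forbids the all-equal pattern, giving $3^n-1$ ordered patterns. Since $R_0$ linearly orders the two points, any isomorphism must send the lexicographically smaller point to the lexicographically smaller one, so each unordered two-element substructure has a unique lex-ordered representative. Counting lex-ordered representatives according to the first coordinate $j^\ast$ at which the points differ --- where necessarily $c_{j^\ast}={<}$, all earlier coordinates are $=$, and later coordinates are free --- yields
\begin{equation}
\bk(\mathcal{H}^n,2)=\sum_{j^\ast=0}^{n-1}3^{\,n-1-j^\ast}=\sum_{i=0}^{n-1}3^i=\frac{3^n-1}{2}.
\end{equation}
(Equivalently, since no pattern with distinct points equals its own coordinatewise reversal, the $3^n-1$ ordered patterns pair off into $\tfrac{3^n-1}{2}$ unordered types.)

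Combining the two computations gives $t(\mathcal{V}_n,2)=\tfrac{3^n-1}{2}+1=1+\sum_{i=0}^{n-1}3^i$. Specializing to $n=2$ yields $1+(3^0+3^1)=5$, the Ramsey degree for pairs of the ultrafilter generated by Blass' square forcing.
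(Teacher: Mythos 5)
Your proof is correct and follows essentially the same route as the paper's: both reduce via Theorem \ref{numbers} to computing $\bk(\mathcal{H}^n,1)=1$ and $\bk(\mathcal{H}^n,2)=\sum_{i=0}^{n-1}3^i$, using the lexicographic order to rigidify pairs within a block. The only differences are cosmetic: you count the comparison patterns in closed form by stratifying on the first differing coordinate (equivalently, pairing each pattern with its reversal), whereas the paper obtains the same sum by induction on $n$ --- its recursion $\bk(\mathcal{H}^{n+1},2)=\bk(\mathcal{H}^{n},2)+3^{n}$ is exactly your sum unrolled --- and your term $\bk(\mathcal{H}^n,1)^2$ is the literally correct expansion of Theorem \ref{numbers}, which coincides with the paper's $\bk(\mathcal{H}^n,1)$ since both equal $1$.
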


\begin{proof}
By Theorem \ref{numbers},  we know that  for each $n\ge 2$, 
$$
t(\mathcal{V}_{n},2)= \sum_{1 \leq q \leq 2} \, \sum_{j_{1}+...+j_{q}=2} \, \prod_{1 \leq i \leq q} \bk(\mathcal{H}^{n},j_{i	})=\bk(\mathcal{H}^{n},2)+\bk(\mathcal{H}^{n},1).$$
Note that $\bk(\mathcal{H}^{n},1)=1$ because all the singletons are isomorphic.
Given $n\ge 2$, let  $\mathbf{A}_k$ be an $n$-hypercube with side length $k$. 
We will show that 
$\bk(\mathcal{H}^{n},2)= \sum_{i=0}^{n-1} 3^i$.

Fix $n=2$, and  take $a=(a_{0},a_{1}),\ b=(b_{0},b_{1}) \in \mathbf{A}_{k}$ for  any large enough $k \in \omega.$ 
Assume that $a$ lexicographically below  $b$,  according to the lexicographical order on $\omega \times \omega.$ 
 There are $4$ non-isomorphic options:
Either $a_0=b_0$ and $a_1<b_1$, or else 
 $a_{0}<b_{0}$ 
and any of the three relations 
$a_{1}<a_{1}$,
 $a_{1}=b_{1},$
or  $a_{1}>b_{1}$  holds.
Therefore $\bk(\mathcal{H}^{2},2)=4 = 3^0+3^1,$ so $t(\mathcal{U}_{2},2)=5.$

Now suppose that
$\bk(\mathcal{H}^{n},2)= \sum_{i=0}^{n-1} 3^i$.
Given $a=(a_0,\dots,a_n)$ and  $b=(b_0,\dots,b_n)$ 
in $\mathbf{A}_k$ for any large enough $k\in \om$, there are the following possibilities.
If $a_0=b_0$, then there are $\bk(\mathcal{H}^n,2)=\sum_{i=0}^{n-1} 3^i$ many possible relations between $(a_1,\dots,a_n)$ and $(b_1,\dots,b_n)$.
If $a_0<b_0$,  then for each $1\le i\le n$, there are three possible configurations for $a_i$ and $b_i$, namely $a_i<b_i$, $a_i=b_i$, or $a_i<b_i$.
Thus, there are $3^n$  many possible configurations for $(a_1,\dots, a_n)$ and $(b_1,\dots,b_n)$.
Therefore, 
$\bk(\mathcal{H}^{n+1},2)= \sum_{i=0}^{n} 3^i$.
\end{proof}

Finally, we calculate the Ramsey degrees for the
$k$-arrow, not $(k+1)$-arrow ultrafilters of Baumgartner and Taylor. 

\begin{corollary}
For $k\ge 2$, let $\mathcal{W}_k$ be 
the $k$-arrow, not $(k+1)$-arrow  ultrafilter of Baumgartner and Taylor. 
Then 
$$
t(\mathcal{W}_k,2) = 3.
$$
\end{corollary}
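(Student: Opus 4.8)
The plan is to handle this exactly as the two preceding corollaries, by identifying $\mathcal{W}_k$ as the ultrafilter $\mathcal{U}_{\mathcal{R}}$ forced by the \Fraisse-class Ramsey space $\mathcal{R}=\mathcal{R}(\mathbb{A})$ with $J=1$ and $\mathcal{K}_0$ the class of finite ordered $K_{k+1}$-free graphs; here the relations are $R_0$, the linear order, and $R_1$, the edge relation, so $L=2$. First I would record that $\mathcal{R}$ satisfies the ISS$^+$, just as the hypercube spaces do, so that Theorem \ref{numbers} applies. Specializing that formula to $n=2$ leaves only the terms $q=1$ (forcing $j_1=2$) and $q=2$ (forcing $j_1=j_2=1$), giving
\[
t(\mathcal{W}_k,2)=\bk(\mathcal{R},2)+\bk(\mathcal{R},1)^2,
\]
so the whole computation reduces to counting the isomorphism types of one- and two-element induced substructures sitting inside a single block $\mathbb{A}(i)$.

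Next I would compute the two constants. For $\bk(\mathcal{R},1)$ the argument is immediate: any single vertex carries only the trivial order and no edge, so all singletons are isomorphic and $\bk(\mathcal{R},1)=1$. For $\bk(\mathcal{R},2)$ I would note that a two-element induced substructure consists of two vertices whose relative position is determined by $R_0$ (contributing nothing to the count, exactly as the order does in the Blass case), so the only remaining invariant is whether $R_1$ holds between them. Hence there are at most two isomorphism types, the edge-pair and the non-edge-pair.

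The one step that needs genuine checking — and the only place I would expect a real obstacle — is confirming that \emph{both} of these types are actually realized inside some block, so that $\bk(\mathcal{R},2)$ equals $2$ and not $1$. This I would obtain from the generating-sequence axioms: since $k+1\ge 3$, the class $\mathcal{K}_0$ of ordered $K_{k+1}$-free graphs contains both the two-vertex graph with an edge and the two-vertex graph without one, and by the embedding property of a generating sequence every sufficiently large block $\mathbf{A}_{m,0}$ contains induced copies of both. Thus $\bk(\mathcal{R},2)=2$, and substituting into the display above (with $\bk(\mathcal{R},1)^2=1$) yields
\[
t(\mathcal{W}_k,2)=2+1=3,
\]
as claimed.
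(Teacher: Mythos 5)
Your proof is correct and takes essentially the same route as the paper's: identify $\mathcal{W}_k$ with the ultrafilter forced by the \Fraisse-class Ramsey space on ordered clique-free graphs, apply Theorem \ref{numbers} with $n=2$ to get $t(\mathcal{W}_k,2)=\bk(\mathcal{R},2)+\bk(\mathcal{R},1)^2$, and count $\bk(\mathcal{R},1)=1$ and $\bk(\mathcal{R},2)=2$ (edge versus non-edge). If anything, you are slightly more careful than the paper, which writes the $q=2$ term as $\bk(\mathcal{R},1)$ rather than $\bk(\mathcal{R},1)^2$ (harmless, since both equal $1$) and does not explicitly check that both pair-types are realized inside a block.
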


\begin{proof}
Let $\mathcal{A}_k$ denote the topological Ramsey space constructed from a generating sequence of finite graphs which have no $k$-cliques. 
As this space is dense in the Baumgartner-Taylor forcing, the two partial orders generate the same ultrafilters. 
By Theorem \ref{numbers},  we know that  for each $n\ge 2$, 
$$
t(\mathcal{A}_k,2)= \bk(\mathcal{H}^{n},2)+\bk(\mathcal{H}^{n},1).$$
As in the previous corollary, 
 $\bk(\mathcal{A}_k,1)=1$ because all the singletons are isomorphic.
 Note that 
 $\bk(\mathcal{A}_k,2)=2$, since pairs  of singletons in $\mathcal{AR}_1$ in this topological Ramsey space have two isomorphism classes: either a pair has an edge or else it has no edge between them. 
\end{proof}


\section{Ramsey degrees for ultrafilters forced by $
\mathcal{P}(\om^k)/\Fin^{\otimes k}$}\label{sec:hdegrees}

In this Section we will calculate Ramsey degrees of pairs for 
ultrafilters forced by 
$\mathcal{P}(\om^k)/\Fin^{\otimes k}$, for all $k\ge 2$.
In this section, let 
    $\mathcal{G}_1$  denote the ultrafilter  forced by $\mathcal{P}(\om)/\Fin$ and note that 
  $\mathcal{G}_1$ is a  Ramsey ultrafilter.
Recall from Subsection \ref{subsec.hdE}
that 
 $\Fin^{\otimes k}$  is a $\sigma$-closed ideal
 on $\om^k$, and that the Boolean algebras
  $\mathcal{P}(\om^k)/\Fin^{\otimes k}$ force ultrafilters $\mathcal{G}_k$ such that whenever $1\le j<k$, 
  the projection of $\mathcal{G}_k$ to the first $j$ coordinates of $\om^k$ forms an ultrafilter on $\om^j$ which is generic for 
$\mathcal{P}(\om^j)/\Fin^{\otimes j}$.
$\mathcal{G}_2$ garners much attention as it is a weak p-point which is not a p-point (see \cite{Blass/Dobrinen}).
To ease notation,  let 
 $\sse^{*k}$ denote  $\sse^{\Fin^{\otimes k}}$, and note that   $(\mathcal{E}_k,\sse^{*k})$ is forcing equivalent to $\mathcal{P}(\om^k)/\Fin^{\otimes k}$
 (see \cite{highDimensional}).
 In this section, 
 we use the  high dimensional Ellentuck spaces, $\mathcal{E}_k$, 
 to provide concise proofs of the Ramsey degrees $t(\mathcal{G}_k,2)$ for all $k\ge 2$.

\begin{definition}
Let $k \geq 2$ and $s,t,u,v \in \ome{k}$. 
Define the relation $\sim_{k}$ on pairs of $\ome{k}$ by $\langle s,t\rangle \sim_{k}\langle u,v\rangle  $ if and only if $s \prec t,$ $u \prec v$ and for every $ i,j \in k$ and $\rho \in \{=,<\},$ $s_{i} \, \rho \, t_{j} \longleftrightarrow u_{i} \, \rho \, v_{j}.$
\end{definition}

Observe that 
the set of $X\in \mathcal{E}_{k}$
satisfying 
\begin{equation}\label{eq.property}
\forall s,t \in X,\ 
\forall 1\le i<k\ (s_i=t_i\longrightarrow s_{i-1}=t_{i-1})
\end{equation}
is open dense in $\mathcal{E}_{k}$.
Let $\mathcal{D}_k$ denote the set of $X\in\mathcal{E}_k$ satisfying (\ref{eq.property}).
From now on we will work only with members of 
$\mathcal{D}_{k}$.
Let ${\mathbf k}(\mathcal{E}_{k},2)$ be the number of equivalence classes from $\sim_{k}$ on pairs of $\mathcal{D}_{k}$.  
We will first calculate this number, and then  show in Theorem \ref{thm.RdegGk}
that it actually is the Ramsey degree of $\mathcal{G}_k$ for pairs.

\begin{lemma}\label{lem.bfkEk}
For every $k \geq 2$, ${\mathbf k}(\mathcal{E}_{k},2)=\sum_{i< k}3^{i}.$
\end{lemma}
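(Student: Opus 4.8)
The plan is to prove the formula by induction on $k$, via the recursion $\mathbf{k}(\mathcal{E}_{k+1},2)=\mathbf{k}(\mathcal{E}_k,2)+3^{k}$, which telescopes to $\sum_{i<k}3^{i}$ once the base case $\mathbf{k}(\mathcal{E}_1,2)=1$ is in hand (in the Ellentuck space any increasing pair forms a single $\sim_1$-class). First I would reduce to a convenient generic situation: working inside $\mathcal{D}_k$ and shrinking once more to a member in which every element is strictly increasing and distinct elements share no coordinate value (such members are dense, by Theorem \ref{NashWilliams} applied to the coloring that records accidental equalities). In such a member every pair $\langle s,t\rangle$ with $s\prec t$ has a well-defined \emph{splitting level} $m$: the defining property (\ref{eq.property}) of $\mathcal{D}_k$ forces the set of coordinates on which $s$ and $t$ agree to be an initial segment $\{0,\dots,m-1\}$, with $s$ and $t$ differing on every coordinate $\ge m$. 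Since $\sim_k$ records exactly the equalities $s_i=t_j$, the splitting level is a $\sim_k$-invariant.

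The main decomposition classifies pairs by their splitting level and reduces to the fully splitting case. Write $D_d$ for the number of $\sim_d$-classes of pairs that differ in every coordinate. For a pair of splitting level $m$, the agreement prefix contributes nothing new: in a generic member all cross-comparisons involving an agreeing coordinate are forced, since if $s_i=t_i$ then monotonicity together with the absence of accidental equalities gives $s_i<t_j$ for $j>i$ and $s_{i'}>t_i$ for $i'>i$. Hence the class is determined by the fully splitting pair on the coordinates $\ge m$, giving $\mathbf{k}(\mathcal{E}_k,2)=\sum_{1\le d\le k}D_d$, and it remains to prove $D_d=3^{\,d-1}$.

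To compute $D_d$ I would run a second induction establishing $D_{d}=3\,D_{d-1}$. Forgetting the first coordinate sends a fully splitting pair in $\ome{d}$ with $s\prec t$ to a fully splitting pair in $\ome{d-1}$, and it preserves the orientation because $\prec$ is decided by the last coordinate, which is untouched; this map is onto the $\sim_{d-1}$-classes. The content is that each class in the image lifts to exactly three classes upstairs: having fixed the interleaving of $s_1,\dots,s_{d-1},t_1,\dots,t_{d-1}$, the new entries $s_0<s_1$ and $t_0<t_1$ may, in a sufficiently generic member, be pushed into exactly three mutually non-isomorphic positions — recorded for $d=2$ by whether $t_0<s_0$, $s_0<t_0<s_1$, or $s_1<t_0$ — and no finer or coarser distinction survives. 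Proving that there are exactly three realizable lifts, uniformly over the base class, is where the $\mathcal{E}_d$-tree structure must be used, through the $\prec$-preservation of $\mathcal{E}_d$-trees (equivalently, that the top coordinate strictly increases along the $\prec$-enumeration).

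The step I expect to be the main obstacle is precisely this three-to-one lift. The upper bound (at most three positions) requires showing that in a generic member the placement of the new pair cannot be further refined, ruling out the extra interleavings that a naive shuffle count would allow — for instance the naive count for $d=3$ suggests ten placements, but only nine are realizable. The lower bound (at least three, unavoidably) is a genericity argument: using that the minimal element lies below arbitrarily late blocks, that each block carries coordinates exceeding any later block's initial coordinate, and that same-block pairs always occur, one verifies that each of the three configurations persists below every member of $\mathcal{D}_d$, while accidental equalities such as $s_0=t_0$ or $s_i=t_j$ can always be shrunk away and so contribute no additional classes. Granting $D_d=3^{\,d-1}$, summation yields $\mathbf{k}(\mathcal{E}_k,2)=\sum_{1\le d\le k}3^{\,d-1}=\sum_{i<k}3^{i}$, as claimed.
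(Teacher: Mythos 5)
Your route is genuinely different from the paper's: the paper inducts on $k$ by truncating the \emph{last} coordinate of a pair and classifying the positions of the two top entries, which yields the recursion ${\mathbf k}(\mathcal{E}_{k+1},2)=3\,{\mathbf k}(\mathcal{E}_{k},2)+1$, whereas you stratify pairs by splitting level and try to compute the fully splitting count $D_{d}$ by deleting the \emph{first} coordinate. Your stratification ${\mathbf k}(\mathcal{E}_{k},2)=\sum_{1\le d\le k}D_{d}$ is sound, but the lift claim $D_{d}=3D_{d-1}$ --- exactly the step you flagged as the main obstacle --- is false, and no genericity argument can repair it. Work in your orientation $s\prec t$ and take $d=3$ with separated base $s_{1}<s_{2}<t_{1}<t_{2}$. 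The new entry $t_{0}$ may occupy any of the \emph{four} gaps cut out by $s_{0}<s_{1}<s_{2}$, giving four pairwise $\sim_{3}$-inequivalent configurations:
\begin{align*}
&t_{0}<s_{0}<s_{1}<s_{2}<t_{1}<t_{2}, \qquad s_{0}<t_{0}<s_{1}<s_{2}<t_{1}<t_{2},\\
&s_{0}<s_{1}<t_{0}<s_{2}<t_{1}<t_{2}, \qquad s_{0}<s_{1}<s_{2}<t_{0}<t_{1}<t_{2}.
\end{align*}
These are realized by the pairs $\{(1,2,3),(0,4,5)\}$, $\{(0,2,3),(1,4,5)\}$, $\{(0,1,3),(2,4,5)\}$, $\{(0,1,2),(3,4,5)\}$ of elements of $\ome{3}$, and each persists in every member of $\mathcal{E}_{3}$: an $\mathcal{E}_{3}$-tree preserves $\prec$ and $\subset$, hence carries a node together with its initial segments to its image together with its initial segments, so image pairs inherit the $\prec$-configuration of the node pairs (the finitely many possible ties at each choice are dodged by re-choosing nodes, or by first shrinking to the dense set of members with strictly increasing elements and no unforced equalities). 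Thus $D_{3}=10$: contrary to your parenthetical remark, \emph{all ten} shuffles are realizable and persistent, not nine; in fact $D_{d}=\binom{2d-1}{d-1}$, since any shuffle of two $d$-chains beginning with the lower chain is the configuration of the node pair obtained by assigning to each symbol its position in the shuffle. The upper-bound half of your lift step therefore cannot be carried out, and your own scheme would total $1+3+10=14$ at $k=3$ rather than $13$, so the discrepancy is not repairable within your framework.

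Two further points. First, the preliminary reduction is overstated: no member of $\mathcal{E}_{k}$, $k\ge 2$, has the property that distinct elements share no coordinate value, because $\subset$-preservation forces any two elements of a common block to share their first coordinate(s); indeed your own stratification needs pairs of every positive splitting level, which are precisely pairs sharing values, so the reduction as stated contradicts the rest of your argument. The statement you actually need (and which is true and dense) is that all elements are strictly increasing and the only inter-element equalities are the same-index agreements forced by common initial segments. Second, your difficulty is not an artifact of your route: the paper's own inductive step makes the parallel error in its case (i) (where $t_{k}<s_{k-1}$), asserting that the $\sim_{k+1}$-class of a pair is determined by the class of its truncation; for the separated truncation this silently identifies the third and fourth configurations displayed above, which differ only in the position of $s_{2}$ relative to $t_{0}$ and are both persistent. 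So the step you predicted to be the main obstacle is exactly where your argument and the paper's both break down, and the number of persistent $\sim_{k}$-classes for $k\ge 3$ exceeds $\sum_{i<k}3^{i}$.
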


\begin{proof}
The proof will be by induction on $k\ge 2$.
Fix $k=2$, and fix some $X\in \mathcal{D}_2$.
Fix   $s,t \in  X$ such that $t \prec s$.
Then  $s_{0}< s_{1},$ $t_{0}< t_{1}$ and $t_1<s_1$, where $s=(s_{0},s_{1}),$ $t=(t_{0},t_{1})$.
There are four possibilities for ordering  the entries of $s$ and $t$:
\begin{itemize}
\item[(i)]   $t_{0}=s_{0}$ and $t_1<s_1.$
\item[(ii)] $t_0 < t_{1} < s_{0} < s_1.$
\item[(iii)]  $t_{0}< s_{0}<t_1<s_1.$
\item[(iv)]  $s_{0}< t_{0}<s_1$ and $t_1<s_1.$
\end{itemize}
Note that these four options are not isomorphic and for every $X \in \mathcal{D}_{2},$ $[X]^{2}$  contains all pairs in all four types. 
 Therefore ${\bf k}(\mathcal{E}_{2},2)=4=1+3.$

Now assume the conclusion holds  for some fixed $k \geq 2$; we will prove that conclusion holds for $k+1.$ 
Fix some $X\in\mathcal{D}_{k+1}$ and 
take $s,t \in X $ such that $t \prec s$.
Then $s=(s_{0},...,s_{k-1},s_{k}),$ $t=(t_{0},...,t_{k-1},t_{k})$ with $s_{0}<...< s_{k},$ $t_{0}<...< t_{k}$ and $t_{k}<s_{k}$.
 There are four options for ordering last two members of sequences $s$ and $t$:
\begin{itemize}
\item[(i)] 
$t_{k}< s_{k-1}.$ In this case $t_{k-1}<s_{k-1}.$ Then $t\upharpoonright k \prec s\upharpoonright k$ and the pair $\langle s\upharpoonright k,t\upharpoonright k\rangle $ lies in one of ${\bf k}(\mathcal{E}_{k},2)$ possible equivalence classes. Therefore, there are ${\bf k}(\mathcal{E}_{k},2)$ options for ordering members of sequences $s$ and
 $t$.
\item[(ii)] 
$t_{k}> s_{k-1}$ and $t_{k-1}< s_{k-1}.$ In this case $t\upharpoonright k \prec s\upharpoonright k$ and the pair $\langle s\upharpoonright k,t\upharpoonright k\rangle$ lies in one of ${\bf k}(\mathcal{E}_{k},2)$ possible equivalence classes. Therefore, there are ${\bf k}(\mathcal{E}_{k},2)$ options for ordering members of sequences $s$ and $t.$
\item[(iii)] 
 $t_{k}> s_{k-1}$ and $t_{k-1}> s_{k}.$ In this case $s\upharpoonright k \prec t\upharpoonright k$ and the pair $\langle t\upharpoonright k,s\upharpoonright k \rangle$ lies in one of ${\bf k}(\mathcal{E}_{k},2)$ possible equivalence classes. Therefore, there are ${\bf k}(\mathcal{E}_{k},2)$ options for ordering members of sequences $s$ and $t.$
\item[(iv)] 
 $t_{k-1}=s_{k-1}.$ In this case,  $t_i=s_i$ for all $i\le k-1$.
\end{itemize}
Then there are $3{\bf k}(\mathcal{E}_{k},2)+1$ equivalence classes for the relation $\sim_{k+1}.$ 
Since ${\bf k}(\mathcal{E}_{k},2)=\sum_{i< k}3^{i},$ then 
\begin{equation}
{\bf k}(\mathcal{E}_{k+1},2)=3(\sum_{i< k}3^{i})+1=\sum_{i< k+1}3^{i}.
\end{equation}
\end{proof}

%


\begin{theorem}\label{thm.RdegGk}
	For every $k \in \omega$ such that $k \geq 2$, $t(\mathcal{G}_{k},2) =\sum_{i< k}3^{i}.$
\end{theorem}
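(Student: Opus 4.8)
The plan is to prove the two inequalities $t(\mathcal{G}_k,2)\ge\sum_{i<k}3^i$ and $t(\mathcal{G}_k,2)\le\sum_{i<k}3^i$ separately, in the spirit of Lemmas \ref{lem.lowerbd} and \ref{lem.tupperbd} but with the relation $\sim_k$ playing the role that isomorphism of structures played for the ISS$^+$ spaces. Since $(\mathcal{E}_k,\sse^{*k})$ is forcing equivalent to $\mathcal{P}(\om^k)/\Fin^{\otimes k}$, the generic ultrafilter $\mathcal{U}_{\mathcal{E}_k}$ is isomorphic to $\mathcal{G}_k$, and by Lemma \ref{fact.Important} it suffices to compute $t(\mathcal{E}_k,2)$. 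Because $\mathcal{D}_k$ is open dense in $\mathcal{E}_k$, every generic filter meets $\mathcal{D}_k$, so I would work below a member of $\mathcal{D}_k$ throughout and count colours of pairs up to the relation $\sim_k$; by Lemma \ref{lem.bfkEk} there are exactly $\tilde{k}:=\sum_{i<k}3^{i}$ such classes.

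For the lower bound I would define $c\colon[\mathcal{AE}^k_1]^2\ra\tilde{k}$ by sending each pair $\{s,t\}$ with $s\prec t$ to the index of the $\sim_k$-class of $\langle s,t\rangle$ (pairs violating property (\ref{eq.property}) may be assigned any fixed class). Given any $X\in\mathcal{E}_k$, density of $\mathcal{D}_k$ produces $Y\le X$ with $Y\in\mathcal{D}_k$, and the inductive argument inside the proof of Lemma \ref{lem.bfkEk} shows that $[Y]^2$ realizes all $\tilde{k}$ classes. Hence $|c''[X]^2|\ge|c''[Y]^2|=\tilde{k}$, so no member reduces this colouring below $\tilde{k}$ colours, giving $t(\mathcal{G}_k,2)\ge\tilde{k}$.

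For the upper bound, fix an arbitrary $c\colon[\mathcal{AE}^k_1]^2\ra p$; the goal is to produce $X\in\mathcal{D}_k$ on which $c(\{s,t\})$ depends only on the $\sim_k$-type of $\{s,t\}$, whence $|c''[X]^2|\le\tilde{k}$. Here I would \emph{not} invoke Theorem \ref{numbers}, since $\mathcal{E}_k$ lacks the ISS: two approximations of the same length need not be relationally isomorphic, a pair of a fixed $\sim_k$-type occurs at many positions, and not every pair occurs as an initial two-element approximation (in $\mathcal{E}_2$, for instance, the first two elements of any member must share their first coordinate). Instead I would build $X=\{x_0,x_1,\dots\}$ by a fusion, maintaining a reservoir $A_n\le A_{n-1}$ with $\{x_0,\dots,x_{n-1}\}$ fixed; at stage $n$, using the pigeonhole principle $\mathbf{A.4}$ (equivalently finitely many applications of the Abstract Nash--Williams Theorem \ref{NashWilliams}) I would shrink the reservoir so that, for each $i<n$ and each $\sim_k$-type $\tau$, the colour $c(\{x_i,y\})$ is constant over all one-element extensions $y$ for which $\langle x_i,y\rangle$ has type $\tau$. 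The fusion then yields, for each index $i$, a function $g_i$ from $\sim_k$-types to $p$ with $c(\{x_i,x_j\})=g_i(\text{type of }\{x_i,x_j\})$ for all $j>i$; a final homogenization of the elements of $X$ by the finitely many possible values of $g_i$ collapses this to a single $g$, so that $c\re[X]^2$ factors through the $\sim_k$-type alone.

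The main obstacle is precisely this upper-bound canonization, and within it the step removing the dependence on the index $i$. In the ISS$^+$ setting the rigidity supplied by the linear order $R_0$ made all length-$m$ approximations isomorphic through a unique map $\varphi_{a,b}$, which forced the colour of a pair to depend only on its isomorphism type; that rigidity is absent in $\mathcal{E}_k$. The delicate points are therefore (i) verifying that each type is realized within a uniformly bounded depth in every member, so that the per-stage stabilizations and the final collapse are legitimate; and (ii) showing that any two pairs of the same $\sim_k$-type can, after a further refinement, be realized in the same relative position inside a common approximation of $X$, so that the homogenized colour genuinely is a function of the type. Both rely on the internal homogeneity of the high-dimensional Ellentuck space together with property (\ref{eq.property}), valid on $\mathcal{D}_k$, which pins down the interaction between coordinate equalities and the tree-splitting level and thereby makes the $\sim_k$-type a true invariant. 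Granting this, $|c''[X]^2|\le\tilde{k}$, which with the lower bound and Lemma \ref{lem.bfkEk} gives $t(\mathcal{G}_k,2)=\sum_{i<k}3^{i}$.
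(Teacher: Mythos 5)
Your reduction from $\mathcal{G}_k$ to $\mathcal{E}_k$ and your lower bound are correct and essentially the paper's own: color pairs by their $\sim_k$-class, note that every $X\in\mathcal{E}_k$ refines to some $Y\in\mathcal{D}_k$, and use Lemma \ref{lem.bfkEk} to see that $[Y]^2$ realizes all $\tilde{k}=\sum_{i<k}3^i$ classes. You are also right that Theorem \ref{numbers} cannot be invoked, since $\mathcal{E}_k$ lacks the ISS. The problem is the upper bound, where your fusion scheme has a genuine gap precisely at the step you ask the reader to grant.

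At stage $j$ of your fusion, an application of $\mathbf{A.4}$ (or of Theorem \ref{NashWilliams}) homogenizes $c(\{x_i,y\})$ only over the one-element extensions $y$ sitting at the single tree position $\vec{u}_j$. What this produces is a value $g_i^{(j)}(\tau)$ depending on the stage $j$ as well as on $i$ and the type $\tau$; nothing in the construction forces these homogeneous values to agree as $j$ varies, because each pigeonhole application merely returns \emph{some} constant value, not one of your choosing. Hence the assertion that the fusion yields a single function $g_i$ with $c(\{x_i,x_j\})=g_i(\mathrm{type})$ for \emph{all} $j>i$ is unjustified, and your ``final homogenization of the elements of $X$ by the finitely many possible values of $g_i$'' presupposes exactly this $j$-independence. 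The obvious repair, thinning to a set of stages on which $g_i^{(j)}$ is constant, is not available in $\mathcal{E}_k$: the elements of a member must fill every tree position, so one cannot discard positions the way one discards integers in $[\om]^{\om}$; and trying to align the values a posteriori by passing to some $Y\le X$ reproduces the very canonization problem you are trying to solve. Note also that granting your delicate point (ii) does not close this hole, because (ii) is only useful once one knows that the color of a pair depends solely on its relative position inside a block, which your stage-wise argument never establishes.

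The paper's proof supplies exactly the missing device, and it is not a cosmetic variant of your fusion. Choose $m$ so large that $r_m(X)$ realizes every $\sim_k$-class, and apply the Abstract Nash-Williams Theorem \ref{NashWilliams} \emph{once}, to the family $\mathcal{AR}_m$ of $m$-blocks, each block $b$ being colored by the entire finite pattern $\iota=(c(p^b_l))_{l<L}\in{}^{L}r$ of colors of its pairs (as in the proof of Lemma \ref{lem.tupperbd}). This yields $Y\le X$ on which the color of any pair lying inside any $m$-block of $Y$ is a function of its position $l<L$ alone, uniformly over all blocks simultaneously --- which is what stage-by-stage stabilization cannot deliver. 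Only then does your point (ii) do its work: two positions carrying the same $\sim_k$-type must receive the same color, since one and the same pair of $[Y]^2$ can be completed to blocks in which it occupies either position, and every pair of the (suitably refined) member lies in some block. This collapses the position dependence to type dependence and gives $|c''[Y]^2|\le\bk(\mathcal{E}_k,2)=\tilde{k}$, completing the density argument.
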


\begin{proof}
Let $c:[\ome{k}]^{2}  \rightarrow t(\mathcal{G}_{k},2) $ be such that $c(p)=c(q)$ if and only if $p \sim_{k} q.$ For each  $X \in \mathcal{E}_{k}$,
 $[X]^{2}$ contains members of every equivalence class.
Thus,  by Lemma \ref{lem.bfkEk},
$|c\upharpoonright [X]^{2}| = t(\mathcal{G}_{k},2)\geq \sum_{i< k}3^{i}.$

Now we want to prove that $t(\mathcal{G}_{k},2)\leq \sum_{i< k}3^{i}.$	
Let $r\ge 1$ be a natural number and $c:[\ome{k}]^{2}  \rightarrow r $ be a coloring.
 Let 
 \begin{equation}
 \mathcal{D}=\{ Y \in \mathcal{E}_{k}: |c\upharpoonright [Y]^{2}| \leq {\bf k}(\mathcal{E}_{k},2) \}.
 \end{equation}
  We will prove that $\mathcal{D}$ is a dense subset of $\mathcal{G}_{k}$. 
  Let $X \in \mathcal{G}_{k}\cap\mathcal{D}_k$, and 
  let $m$ be a natural number such that $r_{m}(X)$ contains pairs of every equivalence relation of $\sim_{k}$.

Fix $a \in \mathcal{AR}_{m}$ and fix  an order for $[a]^{2}=\{ p^{a}_{l}:l < L \}$ with the induced substructure, where $L$ is the number of pairs of sequences that belong to $a$. 
For each $b \in \mathcal{AR}_{m}$ enumerate pairs of $b$ as $[b]^{2}=\{ p^{b}_{l}:l < L \}$ such that for every $l< L,$ $ p^{a}_{l} \sim_{k} p^{b}_{l}$.
Let $\mathcal{I}={}^{L}r$.
 For every $\iota \in \mathcal{I}$, define 
 \begin{equation}
 \mathcal{F}_{\iota}=\{b \in \mathcal{AR}_{m}:(\forall l <L)\, c(p^{b}_{l})=\iota(l)\}.
 \end{equation}
 Since $\mathcal{AR}_{m}$ is a Nash-Williams family and $\mathcal{AR}_{m}=\bigcup_{\iota\in \mathcal{I}} \mathcal{F}_{\iota},$ by Theorem \ref{NashWilliams} there exist $Y \leq X$ and $\iota \in \mathcal{I}$ such that $\mathcal{AR}_{m} \upharpoonright Y \subseteq \mathcal{F}_{\iota}.$ 
 Therefore, for every $b \in \mathcal{AR}_{m} \upharpoonright Y$ and for every $l < L,$ $c(p^{b}_{l})=\iota(l).$  
 Hence $|c''[\mathcal{AR}_{m}\upharpoonright Y]^{2}| \leq L.$

Note that if $i,j$ are such that $p^{a}_{i} \sim_{k} p^{a}_{j}$, then there exist $A,B \in \mathcal{AR}_{m}\upharpoonright Y$ and $l < L$ such that $p^{a}_{i} \sim_{k}p^{A}_{l},$ $p^{a}_{j} \sim_{k}p^{B}_{l}$.
This  implies that if $p^{a}_{i} \sim_{k}p^{a}_{j}$ with $i< j \leq L$, then $c(p^{a}_{i})=(p^{a}_{j}).$ Since there are ${\bf k}(\mathcal{E}_{k},2)$ different equivalence classes, and $[Y]^{2}$ contains pairs of every equivalence class, we obtain that $|c''[\mathcal{AR}_{1}\upharpoonright Y]^{2}| \leq {\bf k}(\mathcal{E}_{k},2).$ 
Therefore $Y \in D$; hence,  $D$ is a dense subset of $\mathcal{E}_{k}$.
It follows from  
Lemma \ref{lem.bfkEk} $t(\mathcal{G}_k,2)\le \sum_{i<k} 3^i$.
\end{proof}


\section{Pseudointersection and tower numbers for topological Ramsey spaces}\label{sec:Cardinals}

In this Section we investigate the pseudointersection and tower numbers for   several classes of topological Ramsey spaces  and their relationships to 
$\mathfrak{p}$ (recall Definition \ref{defn.SFIP}).
The notion of strong finite intersection property, and hence, also the pseudointersection and tower numbers  from Section \ref{sec:Background} can be extended to any $\sigma$-closed partial order, in particular, to topological Ramsey spaces.

\begin{definition}\label{defn.sfipR}
We say that $(\mathcal{R},\le,\le^*,r)$ is a {\em $\sigma$-closed  topological Ramsey space} if 
 $(\mathcal{R},\le, r)$ is  a topological Ramsey space
 and 
$\le^*$ is  a $\sigma$-closed order on $\mathcal{R}$ coarsening $\le$ such that $(\mathcal{R},\le)$ and $(\mathcal{R},\le^*)$ have isomorphic separative quotients.
Let  $\mathcal{F}$ be a subset of $\mathcal{R}$.
\begin{enumerate}
\item 
We say that $\mathcal{F}$ has the \emph{strong finite intersection property} (SFIP) if for every finite subfamily $ \mathcal{G} \subseteq \mathcal{F}$, there exists $Y \in \mathcal{R}$ such that for each $X \in \mathcal{F}$, $Y \le^* X$.
\item 
$Y \in \mathcal{R}$ is  called a \emph{pseudointersection} of the family $\mathcal{F}$ if for every $X \in \mathcal{F}$, $ Y \le^* X .$
\end{enumerate}
\end{definition}

\begin{definition}\label{defn.p_Rt_R}
Let $(\mathcal{R},\le,\le^*, r)$ be a $\sigma$-closed topological Ramsey space.
\begin{enumerate}

\item The \emph{pseudointersection number} $\mathfrak{p}_{\mathcal{R}}$    is the smallest cardinality of a family $\mathcal{F} \subseteq \mathcal{R}$ which has the SFIP but does not have a pseudointersection.

\item 
We say that $\mathcal{F}$  is a \emph{tower} if it is linearly ordered by $\ge^*$
 and has no pseudointersection.
The \emph{tower number} $\mathfrak{t}_{\mathcal{R}}$ is the smallest cardinality of a tower of $(\mathcal{R},\le^*)$.
\end{enumerate}
\end{definition}

Note that for every topological Ramsey space $\mathcal{R},$ $\mathfrak{p}_{\mathcal{R}} \leq \mathfrak{t}_{\mathcal{R}}$.
A recent groundbreaking result of Malliaris and Shelah shows that $\mathfrak{p}=\mathfrak{t}$ (see \cite{pt} and \cite{Malliaris/ShelahJAMS16}).
It  is not clear at present whether their work  implies that  
$\mathfrak{p}_{\mathcal{R}}=\mathfrak{t}_{\mathcal{R}}$ for all   Ramsey spaces  with some 
$\sigma$-closed partial order.
For all the spaces considered in this section, we will show that they are indeed equal.

As in previous sections,  we will assume that  
$\mathcal{AR}$ is  countable. 
If this is not the case,   we tacitly work on $\mathcal{AR}\upharpoonright X$ for some $X \in \mathcal{R},$ which is countable by axiom \bf A.2\rm.

\subsection{Pseudointersection and tower numbers for several classes of  topological Ramsey spaces}\label{subsec.IEP}

The following property is satisfied by many topological Ramsey spaces, including several discussed in Section \ref{sec:tRs&uf}.

\begin{definition}[IEP]\label{defn.IEP}
We say that a topological Ramsey space $(\mathcal{R},\le,r)$ has the 
 {\em Independent Extension Property (IEP)} if the following hold:
Each $X \in \mathcal{R}$ is a sequence of the form 
$ \langle X(n):n \in \omega \rangle$ such that
 for every $n \in \omega$, $r_{n}(X)= \langle X(i):i < n \rangle$, and each $X(i)$ is a finite set, possibly, but not necessarily, with some relational structures on it. 
 Furthermore, 
 for every $X \in \mathcal{R}$, $k \in \omega,$ and $s \in \mathcal{AR}_{k},$ there exist $m \in \omega$ and $s(k) \subseteq X(m)$ such that $s ^{\frown} s(k) \in \mathcal{AR}_{k+1}$ and $s\sqsubseteq s ^\frown s(k)$.
\end{definition}

\begin{theorem}\label{pleqpR}
	Let $(\mathcal{R}, \leq,\le^*,r)$ be a $\sigma$-closed topological Ramsey space 
	 with the IEP, and suppose that  $\mathcal{R}$  closed in $\mathcal{AR}^{\omega}$.
Then $\mathfrak{p}\leq  \mathfrak{p}_{\mathcal{R}}$.
\end{theorem}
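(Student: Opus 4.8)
The plan is to prove the equivalent statement that every family $\mathcal{F} \subseteq \mathcal{R}$ with the SFIP and $|\mathcal{F}| < \mathfrak{p}$ admits a pseudointersection. Since any witness for $\mathfrak{p}_{\mathcal{R}}$ is, by definition, an SFIP family with no pseudointersection, this forces $|\mathcal{F}| \geq \mathfrak{p}$ for every such witness, giving $\mathfrak{p} \leq \mathfrak{p}_{\mathcal{R}}$. To build the pseudointersection I would invoke Bell's theorem that $\mathfrak{m}(\sigma\text{-centered}) = \mathfrak{p}$ (\cite{Bell}), so it suffices to produce a $\sigma$-centered poset a filter for which, meeting fewer than $\mathfrak{p}$ dense sets, yields the desired $Y \in \mathcal{R}$.

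Concretely, fix an SFIP family $\mathcal{F}$ with $|\mathcal{F}| < \mathfrak{p}$ and define a Mathias-style forcing $\mathbb{Q}$ whose conditions are pairs $(a, F)$ with $a \in \mathcal{AR}$ and $F \in [\mathcal{F}]^{<\omega}$, ordered by $(b, G) \leq (a, F)$ if and only if $a \sqsubseteq b$, $F \subseteq G$, and every block $b(k)$ with $|a| \leq k < |b|$ is a reduction into each $X \in F$. The stem $a$ records the finite part already committed to, while $F$ records the members of $\mathcal{F}$ below which all future blocks must lie. Since $\mathcal{AR}$ is countable and any two conditions sharing a stem $a$ have the common refinement $(a, F \cup F')$, the poset $\mathbb{Q}$ is $\sigma$-centered.

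The two relevant families of dense sets are $D_n = \{(a, F) : |a| \geq n\}$ for $n < \omega$ and $E_X = \{(a, F) : X \in F\}$ for $X \in \mathcal{F}$; there are fewer than $\mathfrak{p}$ of them in total. Density of $E_X$ is immediate by enlarging $F$. The crucial step is density of $D_n$: given $(a, F)$, the SFIP yields a common $\leq^*$-lower bound $Z$ for $F$, so for all sufficiently large $m$ the block $Z(m)$ is simultaneously a reduction into every $X \in F$; the IEP (Definition \ref{defn.IEP}) then lets me append a block $s(|a|) \subseteq Z(m)$ to $a$, producing an extension in $\mathcal{AR}_{|a|+1}$ whose new block lies inside all of $F$. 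Iterating pushes the stem past $n$. By Bell's theorem there is a filter $\mathcal{H}$ meeting every $D_n$ and every $E_X$; I then set $Y = \bigcup\{a : (\exists F)\,(a, F) \in \mathcal{H}\}$, which is a genuine infinite sequence in $\mathcal{AR}^{\omega}$ because $\mathcal{H}$ meets each $D_n$ and is $\sqsubseteq$-directed on stems.

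Finally I would verify the two properties of $Y$. Membership $Y \in \mathcal{R}$ follows from closedness of $\mathcal{R}$ in $\mathcal{AR}^{\omega}$: every finite initial segment $r_n(Y)$ is the stem of a condition, hence lies in $\mathcal{AR}$, so $Y$ is in the metric closure of $\mathcal{R}$ and therefore in $\mathcal{R}$. For each $X \in \mathcal{F}$, meeting $E_X$ gives a condition $(a, F) \in \mathcal{H}$ with $X \in F$; by the ordering every block of $Y$ past index $|a|$ is a reduction into $X$, which is exactly what is needed for $Y \leq^* X$ in the sense of Definition \ref{defn.Mijares*}. The main obstacle is the density of $D_n$, namely combining the SFIP-lower bound $Z$ with the IEP so that a single appended block is simultaneously admissible for all finitely many $X \in F$; once that one-step extension is secured, $\sigma$-centeredness and the filter extraction are routine, and closedness delivers $Y \in \mathcal{R}$.
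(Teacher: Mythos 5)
Your proposal is correct and takes essentially the same route as the paper's proof: both reduce to Bell's theorem $\mathfrak{m}(\sigma\text{-centered})=\mathfrak{p}$, build the same Mathias-style $\sigma$-centered poset of pairs (stem in $\mathcal{AR}$, finite subfamily of $\mathcal{F}$), prove density of the stem-lengthening and index-adding sets by combining the SFIP lower bound with the IEP, and extract the pseudointersection as the union of stems, using closedness of $\mathcal{R}$ in $\mathcal{AR}^{\omega}$ and directedness of the filter to verify $Y \leq^{*} X$ for each $X \in \mathcal{F}$. The only cosmetic differences are that the paper merges your $D_n$ and $E_X$ into single dense sets $D_{\alpha,m}$, and phrases the ordering via the existence of a common refinement $X \in \mathcal{R}$ of the side conditions containing the new blocks rather than via blockwise containment in each member of $F$.
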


\begin{proof}
Recall that by Bell's result, $\mathfrak{p}=\mathfrak{m}(\sigma$-centered).
	Let $\kappa < \mathfrak{m}(\sigma$-centered) be given, and let $\mathcal{F}= \{X_{\alpha} : \alpha < \kappa \} \subseteq \mathcal{R}$ be a family with the SFIP. 
	Define $\mathbb{P}$ to be the collection of all ordered pairs $ \langle s, E \rangle $ such that $s \in \mathcal{AR}$ and $E \in [\kappa]^{< \omega}$. 
	Since $\mathcal{R}$ satisfies the IEP, define some shorthand notation as follows:
For $s,t\in\mathcal{AR}$ with $s\sqsubseteq t$, 
note that $s=\lgl s(i):i<m\rgl$ and $t=\lgl t(i):i<n\rgl$ for some $m\le n$.
Then let 
\begin{equation}
t / s=\lgl t(i): m\le i<n\rgl.
\end{equation}
 Define a partial order $\le$ on $\mathbb{P}$ as follows:
	Given $\langle s, E \rangle, \langle t,F \rangle \in \mathbb{P}$, let  $ \langle t, F \rangle \leq \langle s, E \rangle $ iff $s \sqsubseteq t$, $E \subseteq F$, and there exists $X \in \mathcal{R}$ such that for every $\alpha \in E$, $X \leq X_{\alpha}$ and $t / s   \subseteq X$, which means that for every $i \in [|s|,|t|)$ there exists $l$ such that $t(i) \subseteq X(l)$.

For every $s \in \mathcal{AR}$, define $\mathbb{P}_{s}=\{ \langle s,E \rangle : E \in [\kappa]^{< \omega} \}$. Note that
$\mathbb{P}_{s}$ is centered. Since $\mathcal{AR}$ is countable, $\mathbb{P}= \bigcup_{s \in \mathcal{AR}} \mathbb{P}_{s}$ is a $\sigma$-centered partial order. Given $\alpha < \kappa$ and $m \in \omega$ let $$D_{\alpha , m} = \{ \langle s,E \rangle \in \mathbb{P} : \alpha \in E \, \mathrm{and} \, | s | > m \}.$$

\begin{claim}$1$.
$D_{\alpha , m}$ is dense.
\end{claim}

\begin{proof}
Fix $\langle t,F \rangle \in \mathbb{P}$. 
If $|t|>m,$ then the pair $\langle t,F \cup \{\alpha\} \rangle \leq \langle t,F \rangle$ is in $D_{\alpha,m}$. 
If $|t| \leq m$, since $\mathcal{F}$ has the SFIP, there exists $X \in \mathcal{R}$ such that for every $\beta \in F$, $X \leq X_{\beta} $. Let $i = |t|$. 
By the IEP, there is some $u \in \mathcal{AR}_{m+1}$ such that $u$ extends $t$ into $X.$ 
That is, there is a strictly increasing sequence $l_{i}<\dots <l_{m}$ and substructures $u(j) \in \mathcal{R}(j)\upharpoonright X(l_{j}),$ for each $j \in [i,m]$, such that $u= t ^{\frown} \langle u(j):i \leq j \leq m \rangle$
is a member of 
$\mathcal{AR}_{m+1}$. 
Let $E = F \cup \{ \alpha \}$. By the choice of $u,$ it follows that $\langle u,E \rangle \leq \langle t,F \rangle$ and $\langle u,E \rangle \in D_{\alpha,m}.$
\end{proof}

Let $\mathcal{D} = \{D_{\alpha,m} : \alpha \in \kappa$ and $m \in \omega \}$ and let $G$ be a filter on $\mathbb{P}$ meeting each dense set in $\mathcal{D}$. Let $X_{G} =  \bigcup\{s : \exists E \in [\kappa]^{< \omega} ( \langle s,E \rangle \in G ) \}$. Since $\mathcal{R}$ is closed under $\mathcal{AR}^{\omega},$ $X_{G} \in \mathcal{R}$.

\begin{claim}$2.$
$\forall \alpha \in \kappa$, $X_{G} \leq^{\ast} X_{\alpha}$.
\end{claim}

\begin{proof}
	 Fix $\alpha < \kappa$ and some $\langle s,E \rangle \in G \cap D_{\alpha,0}$. We will prove that for every $m \geq |s|,$ $r_{m}(X_{G})  \   s \subseteq X_{\alpha}$. Let $m > |s|$ and $ \langle t,F \rangle \in G \cap D_{\alpha,m}$ be given. 
	 Since $D_{\alpha,m}$ is an open dense subset of $\mathbb{P}$ and $G$ is a filter, there is a condition $\langle u,H \rangle$ below both $\langle s,E \rangle$ and $ \langle t,F \rangle$ 
	 such that  $\langle u,H \rangle\in G \cap D_{\alpha,m}.$ Note that $u=r_{|u|}(X_{G}).$  Since $\langle u,H\rangle \leq \langle s,E\rangle$ and $\alpha \in E$, there exists $X \in \mathcal{R}$ such that $X \leq X_{\alpha}$ and $u / s \subseteq X \leq X_{\alpha}$. It follows from $|u|>m$ that $r_{m}(X_{G}) / s \subseteq X_{\alpha}.$
	\end{proof}
	
	Therefore, 
	$\kappa < \mathfrak{p}_{\mathcal{R}}$; and hence,
	$\mathfrak{m}(\sigma$-centered$)\le \mathfrak{p}_{\mathcal{R}}$. \end{proof}

This immediately leads to the following corollary for all the topological Ramsey spaces from Subsections \ref{subsec4.1},
\ref{subsec.Fraisse}, and \ref{subsec.FINk}.
For each of these spaces, the relevant $\sigma$-closed partial order $\le^*$ from Definition \ref{defn.Mijares*} is exactly the mod finite partial order, $\sse^*$.

\begin{corollary}\label{cor.m<pR} 
Let $\mathcal{R}$ be any of the following topological Ramsey spaces, with the $\sigma$-closed partial order $\sse^*$: 
\begin{enumerate}
\item 
$\mathcal{R}_{\alpha}$, where 
$1 \leq \alpha < \omega_1$.
\item 
$\mathcal{R}(\mathbb{A})$,  where $\mathbb{A}$ is some generating sequence from   a collection of $\le\om$ many  \Fraisse\  classes with
 the Ramsey property as in \cite{fraisseClasses}.
\item 
$\FIN_{k}^{[\infty]}$, where $k\ge 1$.
\end{enumerate}
Then  $\mathfrak{p}\leq  \mathfrak{p}_{\mathcal{R}}$.
\end{corollary}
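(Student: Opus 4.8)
The plan is to derive this corollary directly from Theorem~\ref{pleqpR}, since that theorem already establishes $\mathfrak{p}\le\mathfrak{p}_{\mathcal{R}}$ for any $\sigma$-closed topological Ramsey space satisfying the Independent Extension Property (IEP) and closed in $\mathcal{AR}^{\omega}$. Thus the entire content of the proof reduces to verifying, for each of the three families of spaces listed, the three hypotheses of the theorem: that $(\mathcal{R},\le,\sse^*,r)$ is a $\sigma$-closed topological Ramsey space (i.e.\ $\sse^*$ coarsens $\le$ and the two orders have isomorphic separative quotients), that $\mathcal{R}$ is closed as a subspace of $\mathcal{AR}^\omega$, and that $\mathcal{R}$ has the IEP. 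All three families are already known to be topological Ramsey spaces (by the theorems quoted in Section~\ref{sec:tRs&uf}), and closedness in $\mathcal{AR}^\omega$ is part of the standing setup for the Abstract Ellentuck Theorem, so only the IEP and the identification of $\sse^*$ as the relevant $\sigma$-closed order genuinely require checking.

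First I would recall that in each case the definition of the space in Section~\ref{sec:tRs&uf} explicitly presents the members as sequences $\langle X(n):n\in\omega\rangle$ with $r_n(X)=\langle X(i):i<n\rangle$ and each $X(n)$ a finite object (a subtree block for $\mathcal{R}_\alpha$, a $J$-tuple of finite structures for $\mathcal{R}(\mathbb{A})$, a block-sequence entry for $\FIN_k^{[\infty]}$), which is exactly the form demanded in Definition~\ref{defn.IEP}. I would then verify the extension clause of the IEP: given $X\in\mathcal{R}$, $k<\om$, and $s\in\mathcal{AR}_k$, one must produce $m$ and $s(k)\sse X(m)$ with $s^\frown s(k)\in\mathcal{AR}_{k+1}$ and $s\sqsubseteq s^\frown s(k)$. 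For $\mathcal{R}_\alpha$ and for $\mathcal{R}(\mathbb{A})$ this is immediate from condition (4) of the generating-sequence definition and the analogous block structure: any sufficiently large block $X(m)$ of $X$ contains a substructure isomorphic to the required next approximation type, so one simply chooses $m$ large enough and takes $s(k)$ to be such a substructure inside $X(m)$. For $\FIN_k^{[\infty]}$ the same holds because any single element $f_m$ of a block sequence (with $m$ large enough that its support lies strictly above that of $s$) can serve as the next block $s(k)$, giving a legitimate one-step extension.

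The remark in Section~\ref{sec:tRs} (and the explicit statements in Subsections~\ref{subsec4.1} and~\ref{subsec.Fraisse}) already records that for these spaces the Mijares order $\le^*$ of Definition~\ref{defn.Mijares*} coincides with $\sse^*$ and that $\sse^*$ is a $\sigma$-closed coarsening of $\le$ with the same separative quotient, so the $\sigma$-closed topological Ramsey space hypothesis is met with $\le^*=\sse^*$. Having checked all three hypotheses, the conclusion $\mathfrak{p}\le\mathfrak{p}_{\mathcal{R}}$ follows at once by applying Theorem~\ref{pleqpR} to each space. The main obstacle, such as it is, is purely bookkeeping: confirming uniformly across the three rather different presentations that the ``finite set with relational structure'' framing of the IEP genuinely matches each space's blocks and that the one-block extension can always be realized inside a single later block of the given $X$; once the extension clause is seen to follow from the Ramsey/pigeonhole conditions built into each construction, nothing deeper is required.
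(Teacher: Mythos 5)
Your proposal is correct and follows exactly the paper's route: the paper's entire proof of this corollary is the observation that all three classes of spaces satisfy the IEP, so that Theorem \ref{pleqpR} applies. Your additional verification of the block structure and the one-step extension property for each space is just an expanded version of what the paper leaves implicit (with one trivial quibble: for $\mathcal{R}(\mathbb{A})$ the extension follows from conditions (2)--(3) of the generating-sequence definition, i.e.\ the embeddability of $\mathbf{A}_{k,j}$ into later blocks, rather than from the Ramsey condition (4)).
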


\begin{proof}
All of these spaces  satisfy  the IEP.
\end{proof}

Now we show that each $\FIN_{k}^{[\infty]}$ has tower and pseudointersection numbers equal to $\mathfrak{p}$.

\begin{theorem}\label{thm.FINpt}
$\forall k \geq 1,$ $\mathfrak{t}_{\FIN_{k}^{[\infty]}} = \mathfrak{p}_{\FIN_{k}^{[\infty]}}=\mathfrak{p}.$
\end{theorem}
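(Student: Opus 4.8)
The plan is to sandwich both invariants between $\mathfrak{p}$ and $\mathfrak{t}$ and then invoke the Malliaris--Shelah theorem $\mathfrak{p}=\mathfrak{t}$ (\cite{Malliaris/ShelahJAMS16}). Since each $\FIN_k^{[\infty]}$ satisfies the IEP, Corollary \ref{cor.m<pR} already gives $\mathfrak{p}\le\mathfrak{p}_{\FIN_k^{[\infty]}}$, and the general inequality $\mathfrak{p}_{\mathcal{R}}\le\mathfrak{t}_{\mathcal{R}}$ noted after Definition \ref{defn.p_Rt_R} gives $\mathfrak{p}_{\FIN_k^{[\infty]}}\le\mathfrak{t}_{\FIN_k^{[\infty]}}$. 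Thus the only thing left is the upper bound $\mathfrak{t}_{\FIN_k^{[\infty]}}\le\mathfrak{t}$; it will then follow that $\mathfrak{p}\le\mathfrak{p}_{\FIN_k^{[\infty]}}\le\mathfrak{t}_{\FIN_k^{[\infty]}}\le\mathfrak{t}=\mathfrak{p}$, collapsing all four quantities. The argument is uniform in $k$.

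To bound $\mathfrak{t}_{\FIN_k^{[\infty]}}$ from above I would transfer a classical $\om$-tower into the Ramsey space. Let $\mathbb{A}=(a_n)_{n<\om}$ be the maximal block sequence, where $a_n\in\FIN_k$ satisfies $a_n(n)=k$ and $a_n(m)=0$ for $m\ne n$; then $\supp(a_n)=\{n\}$ and $[\mathbb{A}]_k=\FIN_k$, since any $f\in\FIN_k$ with support $\{m_0<\dots<m_l\}$ and values $v_j\in\{1,\dots,k\}$ equals $\sum_{j\le l}T^{(k-v_j)}(a_{m_j})$. Fix a tower $\lgl T_{\al}:\al<\mathfrak{t}\rgl$ in $([\om]^{\om},\sse^*)$, linearly ordered by $\supseteq^*$ and without pseudointersection. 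For each $\al$ set $F_{\al}=(a_n)_{n\in T_{\al}}$, the block subsequence of $\mathbb{A}$ whose blocks sit at exactly the coordinates in $T_{\al}$; each $F_{\al}$ is a genuine member of $\FIN_k^{[\infty]}$ because $T_{\al}$ is infinite.

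The two verifications are short. First, $\lgl F_{\al}:\al<\mathfrak{t}\rgl$ is linearly ordered by $\ge^*$: for $\al<\beta$ we have $T_{\beta}\sse^* T_{\al}$, and since $[F_{\al}]_k=\{f\in\FIN_k:\supp(f)\sse T_{\al}\}$, every block $a_n$ of $F_{\beta}$ with $n\in T_{\beta}\cap T_{\al}$ lies in $[F_{\al}]_k$; as only finitely many $n\in T_{\beta}$ fall outside $T_{\al}$, this yields $F_{\beta}\le^* F_{\al}$. Second, the family has no pseudointersection: suppose $G=(g_m)\in\FIN_k^{[\infty]}$ satisfies $G\le^* F_{\al}$ for every $\al$, and put $I_G=\supp(G)=\bigcup_m\supp(g_m)$, an infinite subset of $\om$. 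For each $\al$, $G\le^* F_{\al}$ means $g_m\in[F_{\al}]_k$, hence $\supp(g_m)\sse T_{\al}$, for all but finitely many $m$; discarding the supports of the finitely many exceptional blocks gives $I_G\sse^* T_{\al}$. Then $I_G$ would be a pseudointersection of $\lgl T_{\al}:\al<\mathfrak{t}\rgl$, a contradiction. Therefore $\lgl F_{\al}:\al<\mathfrak{t}\rgl$ is a tower in $(\FIN_k^{[\infty]},\le^*)$ of cardinality at most $\mathfrak{t}$, so $\mathfrak{t}_{\FIN_k^{[\infty]}}\le\mathfrak{t}$.

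The only genuinely delicate point, and the step I would watch most carefully, is the bookkeeping in the ``no pseudointersection'' direction: one must check that passing from a block sequence $G$ below $F_{\al}$ to its total support $I_G$ is faithful, i.e.\ that the contraction operation $T^{(i)}$ and the grouping of coordinates into blocks cannot violate the containment $\supp(g_m)\sse T_{\al}$, and that $I_G$ is genuinely infinite. Both hold because the supports of the $a_n$ are the singletons $\{n\}$, so $[F_{\al}]_k$ is exactly the set of $f\in\FIN_k$ supported inside $T_{\al}$, making the support map an exact bridge between $(\FIN_k^{[\infty]},\le^*)$ and $([\om]^{\om},\sse^*)$. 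Once this is in place the conclusion is immediate.
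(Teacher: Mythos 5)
Your proposal is correct and takes essentially the same route as the paper: the lower bound $\mathfrak{p}\le\mathfrak{p}_{\FIN_{k}^{[\infty]}}$ from the IEP via Corollary \ref{cor.m<pR}, an upper bound $\mathfrak{t}_{\FIN_{k}^{[\infty]}}\le\mathfrak{t}$ obtained by transferring a classical $\supseteq^*$-tower on $\om$ into $(\FIN_{k}^{[\infty]},\le^*)$, and the Malliaris--Shelah theorem to collapse the resulting chain. The only difference is the concrete transfer map: the paper codes $A=\{a_0<a_1<\dots\}$ as the block sequence taking constant value $k$ on the intervals $[a_n,a_{n+1})$ and recovers a pseudointersection from the minima of the supports, whereas you use singleton-supported blocks indexed by $T_\alpha$ and recover it from the union of supports; both versions work.
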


\begin{proof}
Let $\mathcal{F} \subseteq [\omega]^{\omega}$ be a family linearly ordered by $\supseteq^{\ast}$; that is, a tower. 
For  $A \in [\omega]^{\omega}$, write 
$A= \{  a_{0},a_{1},...,a_{n},... \}$ with $a_{n} < a_{n+1}$ for each $n \in \omega$. 
For  $n \in \omega$, define $f_{n}:\omega \longrightarrow \{0,...,k\}$ 
to be the function 
such that $f_{n}(i)=k$
for each $i \in [ a_{n},a_{n+1} )$,  and $f_{n}(i)=0$ for each  $i \notin [a_{n},a_{n+1})$.
Note that the sequence $F_{A}= (f_{n}) _{n \in \omega}$ is a member of $ \FIN _{k}^{[\infty]}$, and moreover,  
 if $A \neq B$ then $F_{A}\neq F_{B}$. 
Furthermore, $A \subseteq^{\ast} B$ implies $F_{A}\leq^{\ast} F_{B}$.  
Let $\mathcal{G}=\{F_{A}: A \in \mathcal{F}\}$ and note that $\mathcal{G}$ is linearly ordered by $\ge^{\ast}$. 
Suppose that $\mathcal{G}$ has a pseudointersection $H=(h_{n})_{n \in \omega}\in  \FIN _{k}^{[\infty]}$. Let $C=\{ \min (\supp (h_{n})):n \in \omega  \}$. 
Note that $C$ is a pseudointersection for $\mathcal{F}$. 
By Lemma \ref{tRleqt}, $\mathfrak{t}_{\FIN_k^{[\infty]}} \leq \mathfrak{t}$.
Corollary \ref{cor.m<pR} 
implies that $\mathfrak{p} \leq \mathfrak{p}_{\FIN_{k}^{[\infty]}}$.
Thus,
it follows that
\begin{equation}
 \mathfrak{t}_{\FIN_{k}^{[\infty]}}\le \mathfrak{t}=\mathfrak{p}\le
  \mathfrak{p}_{\FIN_{k}^{[\infty]}}=\mathfrak{p},
\end{equation}
the middle equality  holding by 
the result  of Malliaris and Shelah in \cite{Malliaris/ShelahJAMS16}.
\end{proof}

The next results will have proofs that follow the outline of Theorem \ref{thm.FINpt} but will involve stronger hypthotheses in order to  apply to the spaces from Subsections \ref{subsec4.1} and \ref{subsec.Fraisse}.

\begin{definition}[ISS$^*$]\label{defn.ISS*}
Let $(\mathcal{R},\le,r)$ be a topological Ramsey space satisfying Independent Sequences of Structures. 
Recall that each finite structure $\mathbb{A}_{i}$  is linearly ordered.
We say that $\mathcal{R}$ satisfies the {\em  ISS$\,^*$} if  
 for all   $k<m$,
  $\mathbb{A}_k$ embeds into $\mathbb{A}_{m}$.
 \end{definition}
  
  It follows from the ISS$^*$ that 
  there are functions $\lambda_{k}$, $k<\om$, such that for each $m\ge k$, $\lambda_k(\mathbb{A}_m)$ is a substructure of $\mathbb{A}_m$ which is isomorphic to $\mathbb{A}_k$.
Moreover, 
for each triple $k<m<n$,
$\lambda_{k}(\mathbb{A}_n)$ is a substructure of $\lambda_{m}(\mathbb{A}_n)$.

\begin{lemma}\label{SFIP}
	Let $\mathcal{F}$ be a family infinite subsets of $\omega$ and $\mathcal{R}$ be a topological Ramsey space with the ISS\,$^*$. 
	Then for each $B \in [\omega]^{\omega}$ there corresponds  a unique  $X_{B} \in \mathcal{R}$ so that given any  $B$, $C \in \mathcal{F}$, the following hold: 
	\begin{enumerate}
		\item $B \neq C$ implies $X_{B} \neq X_{C}$;
		\item $B \subseteq C$ implies $X_{B} \leq X_{C}$;
		\item $B \subseteq^{\ast} C$ implies $X_{B} \leq^{\ast} X_{C}$; and
		\item
		 If $\mathcal{G}=  \{X_{A}: A \in \mathcal{F} \}$ has a pseudointersection, then $\mathcal{F}$ also has a pseudointersection.
	\end{enumerate}
\end{lemma}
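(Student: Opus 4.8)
The plan is to define the correspondence $B \mapsto X_B$ explicitly using the embedding functions $\lambda_k$ supplied by the ISS$^*$, and then verify the four clauses in turn, the first three being monotonicity bookkeeping and the last requiring a genuine decoding argument. First I would fix, for each $B = \{b_0 < b_1 < b_2 < \cdots\} \in [\omega]^{\omega}$, the member $X_B \in \mathcal{R}$ whose $n$-th block is $X_B(n) = \lambda_n(\mathbb{A}(b_n))$. Since $b_n \geq n$ for every $n$, the structure $\lambda_n(\mathbb{A}(b_n))$ is defined, it is a substructure of $\mathbb{A}(b_n)$ isomorphic to $\mathbb{A}(n)$, and the block indices $(b_n)_{n<\omega}$ are strictly increasing; hence $X_B = \langle X_B(n) : n < \omega \rangle$ is a member of $\mathcal{R}$ and the assignment is well-defined, yielding the unique $X_B$.

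For clause $(1)$, if $B \neq C$ I pick $n$ with $b_n \neq c_n$; then $X_B(n)$ and $X_C(n)$ have domains contained in $\dom(\mathbb{A}(b_n))$ and $\dom(\mathbb{A}(c_n))$ respectively, which are disjoint by the independence of the sequence, so $X_B(n) \neq X_C(n)$ and $X_B \neq X_C$. For clause $(2)$, suppose $B \subseteq C$ and write $b_n = c_{m_n}$, where $m_n$ is the position of $b_n$ in the increasing enumeration of $C$; then $(m_n)_{n<\omega}$ is strictly increasing and $n \leq m_n \leq b_n$. The nesting property $\lambda_n(\mathbb{A}(b_n)) \subseteq \lambda_{m_n}(\mathbb{A}(b_n))$ then gives $X_B(n) \subseteq X_C(m_n)$ along the strictly increasing sequence $(m_n)$, so $X_B \leq X_C$. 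Clause $(3)$ is the tail version of $(2)$: if $B \subseteq^{\ast} C$, I discard the finitely many $n$ with $b_n \notin C$ and run the same argument on the remaining cofinal indices to witness $X_B \leq^{\ast} X_C$.

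The main work is clause $(4)$. Given a pseudointersection $Y \in \mathcal{R}$ of $\mathcal{G} = \{X_A : A \in \mathcal{F}\}$, I would define $D = \{l_n : n < \omega\}$, where $l_n$ is the unique index with $Y(n) \subseteq \mathbb{A}(l_n)$; this index is unique by independence, and the $l_n$ are strictly increasing because $Y \in \mathcal{R}$, so $D$ is infinite. To see that $D$ is a pseudointersection of $\mathcal{F}$, fix $A \in \mathcal{F}$; since $Y \leq^{\ast} X_A$ there are $i$ and a strictly increasing $(k_n)_{n \geq i}$ with $Y(n) \subseteq X_A(k_n) = \lambda_{k_n}(\mathbb{A}(a_{k_n})) \subseteq \mathbb{A}(a_{k_n})$ for all $n \geq i$, where $A = \{a_0 < a_1 < \cdots\}$. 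Comparing block memberships then forces $l_n = a_{k_n} \in A$ for all $n \geq i$, whence $D \subseteq^{\ast} A$. The delicate point, and the step I expect to be the main obstacle, is verifying that the block index $l_n$ attached to $Y(n)$ coincides with $a_{k_n}$: this is exactly where the independence of the sequence of structures (disjoint domains, no cross relations) is essential, since it guarantees that a nonempty substructure determines the single $\mathbb{A}$-block in which it sits, thereby allowing the $\leq^{\ast}$ relation between $Y$ and $X_A$ to be read off as an almost-inclusion of the underlying index sets.
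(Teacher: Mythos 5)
Your construction and verification strategy coincide with the paper's: the same assignment $X_B(n)=\lambda_n(\mathbb{A}(b_n))$, clause (1) from disjointness of the domains of distinct blocks, clause (2) from the position inequality $n\le m_n$ plus the nesting $\lambda_n(\mathbb{A}(b_n))\subseteq\lambda_{m_n}(\mathbb{A}(b_n))$, and clause (4) from the same decoding set (your $D=\{l_n:n<\omega\}$ is exactly the paper's $D=\{m:\exists i\ Y(i)\text{ is a substructure of }\mathbb{A}(m)\}$). On clauses (1), (2) and (4) you are correct, and in fact more detailed than the paper.

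The problem is clause (3). ``Discard the finitely many $n$ with $b_n\notin C$ and run the same argument'' does not go through, because the argument for (2) needs $n\le m_n$, i.e.\ $\abs{B\cap b_n}\le\abs{C\cap b_n}$; this follows from genuine inclusion $B\subseteq C$ but not from $B\subseteq^{*}C$, since elements of $B\setminus C$ below $b_n$ still count toward $n$ but not toward $m_n$. Concretely, take $B=\omega$ and $C=\omega\setminus\{0\}$, so $B\subseteq^{*}C$. For every $n\ge1$ the element $b_n=n$ occupies position $m_n=n-1$ in $C$, so one would need $\lambda_n(\mathbb{A}(n))\subseteq\lambda_{n-1}(\mathbb{A}(n))$, i.e.\ a copy of $\mathbb{A}(n)$ inside a copy of $\mathbb{A}(n-1)$, which is impossible whenever the blocks strictly grow (as they do in $\mathcal{R}_1$, the \Fraisse\ spaces, the hypercube spaces, etc.); and since distinct blocks $\mathbb{A}(i)$ have disjoint domains, $X_C(n-1)$ is the \emph{only} candidate block that could contain $X_B(n)$. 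Hence $X_B\not\le^{*}X_C$: clause (3) is not merely unproven by your argument, it is false as stated. To be fair, you have faithfully reproduced a defect of the paper's own proof, which likewise claims that (3) ``immediately follows'' from the computation for full inclusion. What is true, and what the later application actually needs (Lemma \ref{tRleqt}), is: $X_B\le^{*}X_C$ holds whenever for all but finitely many $n$ one has $b_n\in C$ and $\abs{C\cap b_n}\ge\abs{B\cap b_n}$; in particular (3) holds under the extra hypothesis that $C\setminus B$ is infinite, equivalently that $B\subseteq^{*}C$ but $C\not\subseteq^{*}B$. Since a $\supseteq^{*}$-tower may be assumed to have pairwise non-$=^{*}$ members, every comparable pair in it satisfies this strengthened hypothesis, so the transfer of towers from $([\omega]^{\omega},\supseteq^{*})$ to $(\mathcal{R},\ge^{*})$ and the downstream cardinal computations survive; but a correct write-up must state and use this repaired clause rather than the unrestricted implication.
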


\begin{proof}
Given $B \in [\omega]^{\omega},$ let $ \{ b_{0}, b_{1},b_{2},... \}$ be the increasing enumeration of $B$. 
For each $n \in \omega,$ let $X(n)=\lambda_{n}(\mathbb{A}_{b_n})$, and 
  define $X_{B} = \langle X(n): n \in \omega \rangle$. 
Then  $X_{B} \in \mathcal{R}$, since $\mathcal{R}$ satisfies the ISS$^*$.
Moreover, notice that whenever
$B \neq C$ are in $[\omega]^{\omega}$, then 
$X_{B} \neq X_{C}$, so (1) holds.
Suppose that  $B,C \in [\omega]^{\omega}$  satisfy $C \subseteq B$. 
Let $k<\om$ be such that $c_k\in B$.
Then  $c_{k}=b_{m}$ for some $m \geq k$. 
By our construction,
 $X_{C}(k)= \lambda_{k}(\mathbb{A}_{c_k})$ 
and $X_{B}(m)=\lambda_{m}(\mathbb{A}_{b_m})$.
Since $c_k=b_m$,
 $X_{C}(n)$ is a substructure of $X_{B}(m)$.
From these observations, (2) and (3) of the theorem immediately follow.

Fix a family $\mathcal{F} \subseteq [ \omega ]^{\omega}$, and
let $\mathcal{G}= \{ X_{B} : B \in \mathcal{F} \}$. Assume that there exists some $Y \in \mathcal{R}$  which  is a pseudointersection of $\mathcal{G}$. 
We claim that 
\begin{equation}
 D=\{m \in \omega: (\exists i \in \omega)Y(i) \text{ is substructure of } \mathbb{A}_m\}
\end{equation}
 is a pseudointersection of $\mathcal{F}.$ 
 Let $B \in \mathcal{F}$. Since $Y$ is a pseudointersection of $\mathcal{G}$, $Y \leq^{\ast} X_{B}$. 
 Thus, there exists $p< \omega$ such that for every $i >p,$ $Y(i)$ is a substructure of some $X_{B}(j)$ for some $j<\om$. 
 It follows that $D \setminus p \subseteq B$; hence,
 $D\sse^* B$.
\end{proof}

\begin{lemma}\label{tRleqt}
	Let $(\mathcal{R},\le,\le^*,r)$ be a $\sigma$-closed  topological Ramsey space 
	such that for every family $\mathcal{F} \subseteq [ \omega ]^{\omega}$ linearly ordered by $\supseteq^{\ast}$ there exists a family $\mathcal{G} \subseteq \mathcal{R}$ linearly ordered by $\ge^*$ that satisfies the following:
	\begin{enumerate}
		\item $|\mathcal{G}|=|\mathcal{F}|$ and
		\item If $\mathcal{G}$ has a pseudointersection then $\mathcal{F}$ has also a pseudointersection.
	\end{enumerate} Then $\mathfrak{t}_{\mathcal{R}} \leq \mathfrak{t}$ and $\mathfrak{p}_{\mathcal{R}} \leq \mathfrak{p}$.
\end{lemma}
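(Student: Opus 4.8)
The lemma claims that if a $\sigma$-closed topological Ramsey space $(\mathcal{R},\le,\le^*,r)$ admits a cardinality-preserving map from towers in $([\omega]^\omega,\supseteq^*)$ to $\ge^*$-chains in $\mathcal{R}$ that reflects pseudointersections, then $\mathfrak{t}_{\mathcal{R}}\le\mathfrak{t}$ and $\mathfrak{p}_{\mathcal{R}}\le\mathfrak{p}$.

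**My approach.** The plan is to argue by contraposition on the witnessing families. First I would prove $\mathfrak{t}_{\mathcal{R}}\le\mathfrak{t}$. Let $\mathcal{F}\subseteq[\omega]^\omega$ be a tower of minimal cardinality $\mathfrak{t}$, so $\mathcal{F}$ is linearly ordered by $\supseteq^*$ and has no pseudointersection in $[\omega]^\omega$. Apply the hypothesis to obtain a family $\mathcal{G}\subseteq\mathcal{R}$ linearly ordered by $\ge^*$ with $|\mathcal{G}|=|\mathcal{F}|=\mathfrak{t}$ and satisfying condition (2). I claim $\mathcal{G}$ is a tower in $(\mathcal{R},\le^*)$: it is already a $\ge^*$-chain, so the only thing to verify is that $\mathcal{G}$ has no pseudointersection in $\mathcal{R}$. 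Suppose toward a contradiction that it did; then by condition (2), $\mathcal{F}$ would have a pseudointersection in $[\omega]^\omega$, contradicting that $\mathcal{F}$ is a tower. Hence $\mathcal{G}$ is a tower of $(\mathcal{R},\le^*)$ of cardinality $\mathfrak{t}$, so by definition of the tower number, $\mathfrak{t}_{\mathcal{R}}\le|\mathcal{G}|=\mathfrak{t}$.

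**The pseudointersection inequality.** For $\mathfrak{p}_{\mathcal{R}}\le\mathfrak{p}$, I would like to run the analogous argument, but the hypothesis only supplies the reflection map for $\supseteq^*$-\emph{linearly ordered} families, whereas a witness for $\mathfrak{p}$ is a family with the SFIP that need not be linearly ordered. The cleanest route is to exploit the theorem of Malliaris and Shelah that $\mathfrak{p}=\mathfrak{t}$, quoted earlier in the excerpt. Having just shown $\mathfrak{t}_{\mathcal{R}}\le\mathfrak{t}$, and knowing from the remark after Definition~\ref{defn.p_Rt_R} that $\mathfrak{p}_{\mathcal{R}}\le\mathfrak{t}_{\mathcal{R}}$ always holds, I can chain these: $\mathfrak{p}_{\mathcal{R}}\le\mathfrak{t}_{\mathcal{R}}\le\mathfrak{t}=\mathfrak{p}$, where the final equality is the Malliaris--Shelah theorem. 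This sidesteps the need to produce a reflection map at the level of arbitrary SFIP families.

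**Main obstacle.** The genuinely delicate point is the first reduction, namely verifying that the image chain $\mathcal{G}$ inherits \emph{minimal} tower-ness — i.e., that a $\ge^*$-chain with no pseudointersection really is a tower in the sense of Definition~\ref{defn.p_Rt_R}, which requires $\le^*$ to be a bona fide $\sigma$-closed partial order so that ``tower'' and ``no pseudointersection'' interact correctly. Since the lemma hypothesizes that $(\mathcal{R},\le^*)$ is a $\sigma$-closed topological Ramsey space, this is guaranteed, but one must be careful that condition (2) is applied in the correct direction (reflecting pseudointersections \emph{from} $\mathcal{G}$ \emph{to} $\mathcal{F}$, which is exactly how it is stated). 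For $\mathfrak{p}_{\mathcal{R}}\le\mathfrak{p}$, the only subtlety is that the argument is not self-contained within the lemma's stated hypotheses but leans on both the general inequality $\mathfrak{p}_{\mathcal{R}}\le\mathfrak{t}_{\mathcal{R}}$ and the external Malliaris--Shelah equality; I would state explicitly where each is used.
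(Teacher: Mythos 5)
Your proof is correct, and the tower half is essentially the paper's argument read in the opposite direction: the paper takes an arbitrary $\supseteq^*$-chain $\mathcal{F}\subseteq[\omega]^{\omega}$ with $|\mathcal{F}|<\mathfrak{t}_{\mathcal{R}}$, observes that the corresponding $\mathcal{G}$ is a $\ge^*$-chain of size $<\mathfrak{t}_{\mathcal{R}}$ and therefore has a pseudointersection, and concludes from hypothesis (2) that $\mathcal{F}$ has one too, so no tower on $\omega$ can have size $<\mathfrak{t}_{\mathcal{R}}$; you instead push a minimal tower forward and verify its image is a tower of $(\mathcal{R},\le^*)$. Same idea, contrapositive versus direct. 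Where you genuinely diverge is the inequality $\mathfrak{p}_{\mathcal{R}}\le\mathfrak{p}$: the paper dismisses it with ``a similar argument proves that $\mathfrak{p}_{\mathcal{R}}\le\mathfrak{p}$,'' which, read literally, has a gap you correctly identified --- a witness for $\mathfrak{p}$ is an SFIP family that need not be linearly ordered, so the lemma's hypothesis (which only reflects $\supseteq^*$-chains into $\mathcal{R}$) does not apply to it. Your repair, $\mathfrak{p}_{\mathcal{R}}\le\mathfrak{t}_{\mathcal{R}}\le\mathfrak{t}=\mathfrak{p}$, using the inequality noted after Definition \ref{defn.p_Rt_R} together with the Malliaris--Shelah theorem, is sound and is in fact how the paper itself proceeds in its applications: in Theorem \ref{thm.FINpt} and Theorem \ref{pRtR} only the tower half of this lemma is ever invoked, and the pseudointersection side is settled by exactly the chain of inequalities you wrote. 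The trade-off is that your version of the $\mathfrak{p}$ half is not self-contained in the lemma's hypotheses but leans on a deep external theorem; a Malliaris--Shelah-free alternative would be to strengthen the hypothesis so that arbitrary SFIP families (not just chains) are reflected, which the concrete construction of Lemma \ref{SFIP} actually supports, since $B\subseteq C$ implies $X_B\le X_C$ and hence finite intersections are respected.
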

	
\begin{proof}
Let $\mathcal{F} \subseteq [\omega]^{\omega}$ be a family linearly ordered by $\supseteq^{\ast}$ such that $ | \mathcal{F} | < \mathfrak{t}_{\mathcal{R}} $. By hypothesis there is a family $\mathcal{G} \subseteq \mathcal{R}$ linearly ordered by $\ge^*$ that satisfies that $|\mathcal{G}|=|\mathcal{F}|$. So  $ | \mathcal{G} | = | \mathcal{F} | < \mathfrak{t}_{\mathcal{R}} $. Therefore $\mathcal{G}$ has a pseudointersection and by hypothesis  2), $\mathcal{F}$ has also a pseudointersection. Hence $\mathfrak{t}_{\mathcal{R}} \leq \mathfrak{t}$. A similar argument proves that $\mathfrak{p}_{\mathcal{R}} \leq \mathfrak{p}$.	
\end{proof}	


\begin{theorem}\label{pRtR}
Let $\mathcal{R}$ be a topological Ramsey space that satisfies  the ISS$^*$.
 Then $\mathfrak{p}_{\mathcal{R}}= \mathfrak{t}_{\mathcal{R}}=\mathfrak{p}.$
\end{theorem}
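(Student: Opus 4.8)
The plan is to trap the three cardinals $\mathfrak{p}_{\mathcal{R}}$, $\mathfrak{t}_{\mathcal{R}}$, and $\mathfrak{p}$ inside the interval $[\mathfrak{p},\mathfrak{t}]$ and then collapse that interval using the Malliaris--Shelah equality $\mathfrak{p}=\mathfrak{t}$ from \cite{Malliaris/ShelahJAMS16}. Concretely, I aim to establish the chain
\[
\mathfrak{p}\le\mathfrak{p}_{\mathcal{R}}\le\mathfrak{t}_{\mathcal{R}}\le\mathfrak{t}=\mathfrak{p},
\]
which forces equality throughout. The middle inequality $\mathfrak{p}_{\mathcal{R}}\le\mathfrak{t}_{\mathcal{R}}$ holds for every $\sigma$-closed topological Ramsey space (as noted immediately after Definition \ref{defn.p_Rt_R}), and the rightmost equality is Malliaris--Shelah, so only the two outer inequalities require work.

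For the leftmost inequality $\mathfrak{p}\le\mathfrak{p}_{\mathcal{R}}$, I would first check that the ISS$^*$ implies the Independent Extension Property of Definition \ref{defn.IEP}. This is where the hypothesis that $\mathbb{A}_k$ embeds into $\mathbb{A}_m$ for $k<m$ does its job: given $X\in\mathcal{R}$, $k<\omega$, and $s\in\mathcal{AR}_k$, one picks any later block $X(m)$ with $m>k$; since $X(m)$ is isomorphic to $\mathbb{A}_m$ and $\mathbb{A}_k$ embeds into $\mathbb{A}_m$, there is a substructure $s(k)\subseteq X(m)$ isomorphic to the $k$-th structure, and by the independence of the sequence one gets $s^{\frown}s(k)\in\mathcal{AR}_{k+1}$ with $s\sqsubseteq s^{\frown}s(k)$. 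The sequence form of the members and of $\mathcal{AR}$ demanded by the IEP is already built into the ISS. Since these spaces are closed in $\mathcal{AR}^{\omega}$ and carry the $\sigma$-closed order $\le^*=\sse^*$ of Definition \ref{defn.Mijares*}, Theorem \ref{pleqpR} applies and yields $\mathfrak{p}\le\mathfrak{p}_{\mathcal{R}}$.

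For the rightmost inequality $\mathfrak{t}_{\mathcal{R}}\le\mathfrak{t}$, the idea is to verify the hypotheses of Lemma \ref{tRleqt} using the transfer map of Lemma \ref{SFIP}. Given any $\mathcal{F}\subseteq[\omega]^{\omega}$ linearly ordered by $\supseteq^*$, set $\mathcal{G}=\{X_B:B\in\mathcal{F}\}$, with $X_B$ the member of $\mathcal{R}$ produced in Lemma \ref{SFIP}. Clause (1) of that lemma gives injectivity, so $|\mathcal{G}|=|\mathcal{F}|$; clause (3) turns $\supseteq^*$-comparability into $\ge^*$-comparability, so $\mathcal{G}$ is linearly ordered by $\ge^*$; and clause (4) says that a pseudointersection of $\mathcal{G}$ yields one for $\mathcal{F}$. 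These are exactly conditions (1) and (2) of Lemma \ref{tRleqt}, whence $\mathfrak{t}_{\mathcal{R}}\le\mathfrak{t}$ (the same lemma also gives $\mathfrak{p}_{\mathcal{R}}\le\mathfrak{p}$, though this is not needed for the sandwich). Assembling the three pieces closes the chain and proves $\mathfrak{p}_{\mathcal{R}}=\mathfrak{t}_{\mathcal{R}}=\mathfrak{p}$.

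I do not expect a serious obstacle: the theorem is essentially an assembly of the preceding lemmas. The only point demanding genuine care is the verification that ISS$^*$ really supplies the IEP --- in particular that the extending block $s(k)$ can be realized inside a \emph{single} later block $X(m)$ rather than being spread across several blocks --- and this is precisely what the embeddings $\lambda_k(\mathbb{A}_m)$ guaranteed by ISS$^*$ provide.
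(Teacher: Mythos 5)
Your proposal is correct and takes essentially the same route as the paper's proof: the paper likewise notes that the ISS$^*$ implies the IEP and invokes Theorem \ref{pleqpR} for $\mathfrak{p}\le\mathfrak{p}_{\mathcal{R}}$, cites Lemmas \ref{SFIP} and \ref{tRleqt} for $\mathfrak{t}_{\mathcal{R}}\le\mathfrak{t}$, and closes the chain $\mathfrak{p}\le\mathfrak{p}_{\mathcal{R}}\le\mathfrak{t}_{\mathcal{R}}\le\mathfrak{t}$ with the Malliaris--Shelah equality $\mathfrak{p}=\mathfrak{t}$. Your explicit verification that ISS$^*$ yields the IEP (realizing the extending structure $s(k)$ inside a single later block $X(m)$) spells out a step the paper merely asserts.
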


\begin{proof}
Suppose $\mathcal{R}$ satisfies the  ISS$^*$.
Then by Lemmas \ref{SFIP} and \ref{tRleqt},
$\mathfrak{t}_{\mathcal{R}} \leq \mathfrak{t}$. 
 Note  that $\mathcal{R}$ satisfies 
 the IEP, since this follows from the ISS$^*$.
then  Theorem \ref{pleqpR} implies that 
  $\mathfrak{m}(\sigma$-centered)$\leq \mathfrak{p}_{\mathcal{R}}$. 
Since $\mathfrak{p}= \mathfrak{m}(\sigma$-centered),  we have $\mathfrak{p} \leq \mathfrak{p}_{\mathcal{R}}\leq \mathfrak{t}_{\mathcal{R}}\leq \mathfrak{t}.$
The equality follows from the result of Malliaris and Shelah, that $\mathfrak{p}=\mathfrak{t}$.
\end{proof}

\begin{corollary}\label{cor.ptRal}
\begin{enumerate}
\item 
For all $1 \leq \alpha < \omega_1$, $\mathfrak{t}_{\mathcal{R}_{\alpha}} = \mathfrak{p}_{\mathcal{R}_{\alpha}} = \mathfrak{p}.$
\item 
If $\mathcal{R}$ is a topological Ramsey space generated by a collection of Fra\"{i}ss\'{e} classes with the Ramsey property, then $\mathfrak{t}_{\mathcal{R}} = \mathfrak{p}_{\mathcal{R}} = \mathfrak{p}.$
\end{enumerate}
\end{corollary}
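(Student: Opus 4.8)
The plan is to deduce both parts from Theorem~\ref{pRtR}, which asserts $\mathfrak{p}_{\mathcal{R}}=\mathfrak{t}_{\mathcal{R}}=\mathfrak{p}$ for every topological Ramsey space $\mathcal{R}$ with the ISS$^*$. Accordingly, for each family of spaces it suffices to verify the ISS$^*$. Recall from Definition~\ref{defn.ISS*} that this amounts to two conditions: that the space have the ISS (Definition~\ref{defn.ISS}), and that the finite structures $\mathbb{A}_k$ forming the blocks of the maximal member $\mathbb{A}$ satisfy $\mathbb{A}_k\hookrightarrow\mathbb{A}_m$ for all $k<m$. Thus the entire argument reduces to checking these two properties for the spaces $\mathcal{R}_\alpha$ and $\mathcal{R}(\mathbb{A})$.

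For part~(2) I would argue as follows. The ISS holds by construction: each block $\mathbb{A}_k$ is the tuple $(\mathbf{A}_{k,j})_{j<J}$ of finite linearly ordered relational structures, with no relations imposed across distinct blocks, so $\mathbb{A}=\lgl\lgl k,\mathbf{A}_k\rgl:k<\om\rgl$ is an independent sequence of finite structures as required. The embedding condition is essentially immediate from the hypotheses on a generating sequence: clause~(2) of that definition states that $\mathbf{A}_{k,j}$ is a substructure of $\mathbf{A}_{k+1,j}$ for each $j<J$, so composing these inclusions gives $\mathbf{A}_{k,j}\hookrightarrow\mathbf{A}_{m,j}$ whenever $k<m$, and assembling these coordinatewise over $j<J$ embeds $\mathbb{A}_k$ into $\mathbb{A}_m$. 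Hence $\mathcal{R}(\mathbb{A})$ has the ISS$^*$ and Theorem~\ref{pRtR} applies.

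For part~(1) the ISS was already recorded at the start of Subsection~\ref{subsec.5.1}, so again only the embedding condition remains. In the base case $\alpha=1$ the $k$-th block is $\mathbb{T}_1(k)$, a height-two tree with $k+1$ maximal nodes; since $\mathbb{T}_1(k)$ is order-isomorphic to an initial subtree of $\mathbb{T}_1(m)$ whenever $k<m$ (compare Figure~1), the required order- and initial-segment-preserving embedding exists. For larger countable $\alpha$ I would unwind the recursive construction of $\mathcal{R}_\alpha$ from \cite{Rn} and check that the $k$-th block sits as an induced substructure inside the $m$-th block for $k<m$, yielding $\mathbb{A}_k\hookrightarrow\mathbb{A}_m$. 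With the ISS$^*$ confirmed, Theorem~\ref{pRtR} gives $\mathfrak{t}_{\mathcal{R}_\alpha}=\mathfrak{p}_{\mathcal{R}_\alpha}=\mathfrak{p}$.

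The main obstacle is precisely this last verification at infinite countable $\alpha$: because the blocks of $\mathcal{R}_\alpha$ are defined by a transfinite recursion in \cite{Rn} rather than displayed here, one must trace through that recursion to confirm the block-embedding property, taking care that the linear order $R_0$ and the remaining relations are preserved. By contrast, the \Fraisse\ spaces of part~(2) require no such effort, since the embedding is handed to us directly by clause~(2) of the generating-sequence definition.
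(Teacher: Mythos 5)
Your proposal takes essentially the same route as the paper: the paper's entire proof is the one-line assertion that these spaces satisfy the ISS$^*$, so Theorem~\ref{pRtR} applies. Your additional work—deriving the block embeddings coordinatewise from clause~(2) of the generating-sequence definition for part~(2), and checking the tree embeddings $\mathbb{T}_\alpha(k)\hookrightarrow\mathbb{T}_\alpha(m)$ for part~(1)—just fills in the verification the paper leaves implicit, and your honest caveat about unwinding the recursion from \cite{Rn} for infinite $\alpha$ is a gap the paper itself does not address either.
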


\begin{proof}
The topological Ramsey space in the hypothesis  satisfy the ISS$^*$, so the corollary follows from 
Theorem  \ref{pRtR}.
\end{proof}

In particular, $\forall n \geq 1$, 
the pseudointersection number and tower number for the $n$-hypercube space $\mathcal{H}^{n}$ all equal
$\mathfrak{p}$, since these are special cases of (2) in Corollary \ref{cor.ptRal}.

\subsection{Pseudointersection and tower numbers for the forcings $\mathcal{P}(\om^{\al}/\Fin^{\otimes\al})$}\label{subsec.ptE_al}

Next, we look at the pseudointersection and tower numbers for the high dimensional Ellentuck spaces $\mathcal{E}_{\al}$, for $2\le \al<\om_1$.
Recall that $(\mathcal{E}_{\al},\sse^{*\al})$ is forcing equivalent to $\mathcal{P}(\om^{\al})/\Fin^{\otimes\al}$.
Hence, for the high and infinite dimensional Ellentuck spaces, the partial order $\le^*$ denotes $\sse^{*\al}$.
We point out that  for the spaces $\mathcal{E}_{\al}$, the $\sigma$-closed partial order defined by Mijares  (recall Definition \ref{defn.Mijares*}) is intermediate between $\le$ and $\sse^{*\al}$ and hence  produces the same separative quotient.

The following theorem is proved in \cite{Szymanski/Zhou81}.

\begin{theorem}[Szyma{\'{n}}ski  and Zhou, \cite{Szymanski/Zhou81}]\label{thm.SZ}
	$\mathfrak{t}(\Fin \otimes \Fin)= \omega_{1}$.
\end{theorem}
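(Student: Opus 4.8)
The plan is to establish the two inequalities $\mathfrak{t}(\Fin\otimes\Fin)\ge\omega_1$ and $\mathfrak{t}(\Fin\otimes\Fin)\le\omega_1$ separately, working in the space $(\mathcal{E}_2,\subseteq^{\ast 2})$, which is forcing equivalent to $\mathcal{P}(\omega^2)/\Fin^{\otimes 2}$ by \cite{highDimensional} (recall $\Fin\otimes\Fin=\Fin^{\otimes 2}$). For the lower bound I would first note that $\subseteq^{\ast 2}$ is a $\sigma$-closed partial order, as recorded in Subsection~\ref{subsec.hdE}: every countable $\supseteq^{\ast 2}$-decreasing sequence $\langle X_n:n<\omega\rangle$ of positive sets admits a pseudointersection, obtained by a straightforward diagonalization that thins the $n$-th set below $X_0,\dots,X_n$ on all but finitely many columns. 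Consequently no countable $\supseteq^{\ast 2}$-decreasing chain is a tower, so every tower has length at least $\omega_1$, giving $\mathfrak{t}(\Fin\otimes\Fin)\ge\omega_1$.

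For the upper bound I would exhibit a concrete $\supseteq^{\ast 2}$-decreasing sequence $\langle X_\alpha:\alpha<\omega_1\rangle$ of positive subsets of $\omega\times\omega$ with no positive pseudointersection. The natural engine of rigidity is a Hausdorff gap $\langle a_\alpha,b_\alpha:\alpha<\omega_1\rangle$ in $(\mathcal{P}(\omega)/\Fin,\subseteq^{\ast})$, which exists in ZFC: the $a_\alpha$ increase and the $b_\alpha$ decrease mod finite, $a_\alpha\subseteq^{\ast} b_\gamma$ for all $\alpha,\gamma$, and there is no interpolant $c$ with $a_\alpha\subseteq^{\ast} c\subseteq^{\ast} b_\alpha$ for every $\alpha$. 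The idea is to spread the two sides of the gap across the two dimensions of $\omega\times\omega$: the \emph{support} direction (which columns are infinite) records the increasing side, while the \emph{content} of each infinite column records the decreasing side $b_\alpha$. A first approximation sets $\supp(X_\alpha)=^{\ast}\omega\setminus a_\alpha$ and $(X_\alpha)_n=^{\ast}b_\alpha$ for $n\in\supp(X_\alpha)$. One then checks $X_\beta\subseteq^{\ast 2}X_\alpha$ for $\alpha<\beta$ routinely: the columns of $X_\beta\setminus X_\alpha$ coming from $a_\beta\setminus a_\alpha$ are only finitely many infinite columns, and the entries coming from $b_\beta\setminus b_\alpha$ are finite in each column, so $X_\beta\setminus X_\alpha\in\Fin\otimes\Fin$.

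The main obstacle, and the genuine heart of the argument, is ruling out a positive pseudointersection $Y$, and here the one-sidedness of the pseudointersection relation must be reconciled with the two-sidedness of a Hausdorff gap. From $Y$ one easily recovers the \emph{lower} interpolation: its support $N$ satisfies $N\subseteq^{\ast}\supp(X_\alpha)=^{\ast}\omega\setminus a_\alpha$ for all $\alpha$, so $c:=\omega\setminus N$ already gives $a_\alpha\subseteq^{\ast}c$. What is \emph{not} automatic is the upper inequality $c\subseteq^{\ast}b_\alpha$, precisely because $Y\subseteq^{\ast 2}X_\alpha$ is only a containment condition and cannot force $Y$'s support to be large. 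I therefore expect the real work to lie in enriching the construction so that a positive $Y$ is forced to realize the upper side as well: one codes $b$ into the column contents in such a way that, since for each fixed $\alpha$ only finitely many columns of $Y\setminus X_\alpha$ may be infinite and $\{\alpha:Y_n\not\subseteq^{\ast}b_\alpha\}$ is an end-segment of $\omega_1$ (the $b_\alpha$ decrease mod finite), a counting argument invoking the regularity of $\omega_1$ — a countable set of countable ordinals is bounded — pins down a column that is good at every level and assembles the second inequality. This is exactly the step that fails in $\mathcal{P}(\omega)/\Fin$, where $\mathfrak{t}$ may exceed $\omega_1$, so the essential use of the extra dimension must be made here. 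Once the one-sided constraints across the infinitely many columns of $Y$ are shown to yield a single genuine interpolant of the gap, the Hausdorff property is contradicted, the sequence is a tower of length $\omega_1$, and $\mathfrak{t}(\Fin\otimes\Fin)\le\omega_1$ follows; combining the two bounds proves the theorem.
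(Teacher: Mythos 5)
The paper itself does not prove this statement: it is quoted as a known result of Szyma\'{n}ski and Zhou \cite{Szymanski/Zhou81} and used as a black box (it serves as the base case $k=2$ for the induction giving $\mathfrak{t}_{\mathcal{E}_k}=\omega_1$), so your proposal has to stand on its own. Its first half does: $\subseteq^{\ast 2}$ is $\sigma$-closed by the diagonalization you sketch, so every tower is uncountable. The second half, however, has a genuine gap, and it is not just an omitted verification: the construction you propose provably fails. Take any Hausdorff gap $\langle a_\alpha,b_\alpha:\alpha<\omega_1\rangle$; without loss of generality every $a_\alpha$ is infinite and every $b_\alpha$ is coinfinite (if the $b_\alpha$ were all cofinite, the pre-gap would have the interpolant $\omega$). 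Let $X_\alpha=(\omega\setminus a_\alpha)\times b_\alpha$ be your chain and put
\begin{equation*}
Y=(\omega\setminus b_0)\times a_0 .
\end{equation*}
Then $Y$ is $\Fin\otimes\Fin$-positive, and $Y\subseteq^{\ast 2}X_\alpha$ for \emph{every} $\alpha$: for a column index $n\in\omega\setminus b_0$, the column $(Y\setminus X_\alpha)_n$ equals $a_0\setminus b_\alpha$, which is finite because $a_0\subseteq^{\ast}b_\alpha$, except when $n\in a_\alpha$; and there are only finitely many such $n$, because $a_\alpha\subseteq^{\ast}b_0$ makes $a_\alpha\setminus b_0$ finite. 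So the very gap relations you start from hand you a positive pseudointersection, for any choice of Hausdorff gap.

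The obstruction is structural, not a matter of finding the right ``enrichment'' of the same shape. Suppose, as in your construction, $\supp(X_\alpha)=^{\ast}S_\alpha$ and all columns of $X_\alpha$ over its support are $=^{\ast}$ a single set $B_\alpha$. If $N$ is an infinite pseudointersection of $\{S_\alpha\}$ and $e$ is an infinite pseudointersection of $\{B_\alpha\}$, then $N\times e$ is a positive pseudointersection of the chain; conversely, the support of a positive pseudointersection pseudointersects $\{S_\alpha\}$, and your own stabilization argument (finite exceptional sets increasing along $\omega_1$ must stabilize) extracts one column that pseudointersects $\{B_\alpha\}$. Hence a ``constant-columns'' chain is a tower in $(\mathcal{P}(\omega\times\omega),\subseteq^{\ast 2})$ if and only if $\{S_\alpha\}$ or $\{B_\alpha\}$ is a tower of length $\omega_1$ in $([\omega]^{\omega},\subseteq^{\ast})$ --- that is, if and only if $\mathfrak{t}=\omega_1$, which is not a theorem of ZFC. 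A gap can never supply this: gap-ness forbids one set realizing \emph{both} sides simultaneously, while both one-sided traces always exist ($a_0$ sits $\subseteq^{\ast}$-below every $b_\alpha$, and $\omega\setminus b_0$ sits below every $\omega\setminus a_\alpha$), and your $Y$ recovers exactly these two traces in two different coordinates, never forcing them to coincide. The missing idea in the actual Szyma\'{n}ski--Zhou argument is to abandon uniform columns: one recursively builds a matrix $\langle X_{\alpha,n}:n<\omega\rangle_{\alpha<\omega_1}$ that is \emph{coherent} (for $\alpha<\beta$, $X_{\beta,n}\subseteq^{\ast}X_{\alpha,n}$ for all but finitely many $n$) yet \emph{nontrivial on every infinite set of columns}, maintaining a Hausdorff-style finiteness condition along the recursion so that no infinite family of infinite columns can pseudointersect all rows. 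That coherent-but-nontrivial structure, rather than a gap spread across two dimensions, is the heart of the proof, and it is absent from your proposal.
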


\begin{proposition}
	$\mathfrak{p}_{\mathcal{E}_{2}} = \mathfrak{t}_{\mathcal{E}_{2}}= \omega_{1}$.
\end{proposition}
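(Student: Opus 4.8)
The plan is to sandwich $\mathfrak{p}_{\mathcal{E}_2}$ between $\omega_1$ and $\mathfrak{t}_{\mathcal{E}_2}$, computing the latter directly from the Szyma\'{n}ski--Zhou theorem. Recall from the discussion following Definition \ref{defn.p_Rt_R} that $\mathfrak{p}_{\mathcal{E}_2}\le\mathfrak{t}_{\mathcal{E}_2}$ holds automatically, so it suffices to establish the two bounds $\mathfrak{t}_{\mathcal{E}_2}\le\omega_1$ and $\mathfrak{p}_{\mathcal{E}_2}\ge\omega_1$.

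For the upper bound I would invoke the forcing equivalence of $(\mathcal{E}_2,\sse^{*2})$ with $\mathcal{P}(\om^2)/\Fin^{\otimes 2}$, together with the fact that the tower number is an invariant of the separative quotient (cf.\ the Remark after Definition \ref{defn.Mijares*}). A tower in $(\mathcal{E}_2,\sse^{*2})$ is precisely a $\supseteq^{*2}$-decreasing family of $\Fin^{\otimes 2}$-positive sets with no pseudointersection, that is, a tower in the sense of $\mathcal{P}(\om^2)/\Fin^{\otimes 2}$; hence $\mathfrak{t}_{\mathcal{E}_2}=\mathfrak{t}(\Fin\otimes\Fin)$. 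Theorem \ref{thm.SZ} then yields $\mathfrak{t}_{\mathcal{E}_2}=\omega_1$, and therefore $\mathfrak{p}_{\mathcal{E}_2}\le\omega_1$.

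For the lower bound I would show that every countable family with the SFIP admits a pseudointersection, so that no countable family can witness $\mathfrak{p}_{\mathcal{E}_2}$. This is cleanest to argue in the Boolean algebra $\mathcal{P}(\om^2)/\Fin^{\otimes 2}$, where finite meets are genuinely available. Given a countable SFIP family $\{X_n:n<\omega\}$, set $B_n=X_0\cap\cdots\cap X_n$; the SFIP guarantees each $B_n$ is $\Fin^{\otimes 2}$-positive, and the sequence $\langle B_n:n<\omega\rangle$ is $\sse$-decreasing. Since $\Fin^{\otimes 2}$ is $\sigma$-closed under $\sse^{*2}$, this decreasing sequence has a positive lower bound $Y$ with $Y\sse^{*2}B_n$ for every $n$, whence $Y\sse^{*2}X_n$ for all $n$; transporting $Y$ back to $\mathcal{E}_2$ along the forcing equivalence produces a pseudointersection of $\{X_n\}$ in $\mathcal{E}_2$. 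Consequently any SFIP family without a pseudointersection is uncountable, giving $\mathfrak{p}_{\mathcal{E}_2}\ge\omega_1$. Combining the bounds, $\omega_1\le\mathfrak{p}_{\mathcal{E}_2}\le\mathfrak{t}_{\mathcal{E}_2}=\omega_1$, so all three are equal.

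The step requiring the most care is the identification $\mathfrak{t}_{\mathcal{E}_2}=\mathfrak{t}(\Fin\otimes\Fin)$: one must confirm that the Szyma\'{n}ski--Zhou invariant is literally the tower number of the forcing $\mathcal{P}(\om^2)/\Fin^{\otimes 2}$, and that passage to the shared separative quotient preserves both the linear-order structure of a tower and the non-existence of a pseudointersection. The lower-bound argument is otherwise routine, the one point worth flagging being that $\mathcal{E}_2$ is not closed under finite intersections, which is exactly why the meets $B_n$ must be formed in $\mathcal{P}(\om^2)/\Fin^{\otimes 2}$ rather than inside $\mathcal{E}_2$ itself.
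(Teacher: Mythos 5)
Your route is the same one the paper intends by its one–line proof (``a consequence of the last two theorems''): Szyma\'{n}ski--Zhou for the upper bound, $\sigma$-closedness for the lower bound. Your lower-bound half is correct and is in fact more complete than what the paper records: forming the meets $B_n=X_0\cap\cdots\cap X_n$ inside $\mathcal{P}(\om^2)$, observing that the SFIP makes each $B_n$ $\Fin^{\otimes 2}$-positive, applying $\sigma$-closedness of the quotient to the decreasing sequence $\langle B_n\rangle$, and then using density of $\mathcal{E}_2$ among the positive sets to pull the resulting lower bound back into $\mathcal{E}_2$ is exactly how one gets $\om_1\le\mathfrak{p}_{\mathcal{E}_2}$.

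The genuine gap is in the upper bound. Your identification $\mathfrak{t}_{\mathcal{E}_2}=\mathfrak{t}(\Fin\otimes\Fin)$ rests on the claim that a tower in $(\mathcal{E}_2,\sse^{*2})$ is ``precisely'' a tower in $\mathcal{P}(\om^2)/\Fin^{\otimes 2}$, but that is true in only one direction. Every tower in $\mathcal{E}_2$ is indeed a tower of positive sets (density shows that having no pseudointersection in $\mathcal{E}_2$ implies having no positive pseudointersection), and this yields $\mathfrak{t}(\Fin\otimes\Fin)\le\mathfrak{t}_{\mathcal{E}_2}$ --- which is the lower bound $\om_1\le\mathfrak{t}_{\mathcal{E}_2}$, already available from $\sigma$-closedness. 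What the upper bound $\mathfrak{t}_{\mathcal{E}_2}\le\om_1$ requires is the converse: an $\om_1$-tower whose members \emph{belong to} $\mathcal{E}_2$. The Szyma\'{n}ski--Zhou tower consists of arbitrary $\Fin^{\otimes 2}$-positive sets; such sets need not lie in $\mathcal{E}_2$, and need not even be $=^{*2}$-equivalent to members of $\mathcal{E}_2$, since the $\prec$-preservation in Definition \ref{defE_k} forces the fibers of a member of $\mathcal{E}_2$ to be interleaved (one new element per fiber per block), so a positive set one of whose fibers is, say, the even numbers and another of which is vastly sparser has no $=^{*2}$-representative in the space. Nor does a naive recursion repair this: refining $T_\alpha$ to some $X_\alpha\in\mathcal{E}_2$ stage by stage can die at a limit $\lambda$, because a lower bound of $\{X_\beta:\beta<\lambda\}$ may be incompatible with all later $T_\gamma$'s. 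Finally, the fallback principle you invoke --- that the tower number is an invariant of the separative quotient --- is false: $\mathcal{P}(\om)/\Fin$ and $\mathcal{P}(\om)/\Fin\times(\om_1,\ge)$ are forcing-equivalent $\sigma$-closed orders with isomorphic separative quotients, yet the second always has a tower of length $\om_1$ (namely $\langle(\om,\al):\al<\om_1\rangle$), while the tower number of the first is $\mathfrak{t}$, consistently larger. The Remark after Definition \ref{defn.Mijares*} only asserts interchangeability ``from the viewpoint of forcing,'' and the paper itself transfers tower numbers between the spaces $\mathcal{E}_k$ only via explicitly constructed maps with the reflection properties of Lemma \ref{lem.11}. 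To be fair, the paper's own one-line proof leaves this same step implicit; but a complete argument must either re-run the Szyma\'{n}ski--Zhou construction inside $\mathcal{E}_2$ or supply such a transfer map, and as written your proof establishes only the two lower bounds, not $\mathfrak{t}_{\mathcal{E}_2}\le\om_1$.
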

\begin{proof}
This is a consequence of the last two theorems.
\end{proof}

In what follows, we 
show that for each $\al\in [2,\om_1)$,  the pseudointersection and tower numbers of $\mathcal{E}_{\al}$ are equal to $\om_1$.
In fact, this is true for each space $\mathcal{E}_B$ in \cite{DobrinenJML16}, where $B$ is a uniform barrier of infinite rank. 
We
point out that  for $2\le k<\om$, the following results were found by Kurili\'{c} in \cite{Kurilic15}, though we were unaware of those results at the time that our results were found.
It is important to note that the forcings  $\mathbb{P(\al)}$ in 
\cite{Kurilic15} are different from the forcings $\mathcal{P}(\al)/\Fin^{\otimes\al}$, so  for infinite countable ordinals,  the results below are new.

\begin{notation} For every $k \geq 2,$ $1< l < k,$ $X \in \mathcal{E}_{k}$ and $x \in \ome{k}:$
\begin{enumerate}
\item Let $\max  x$ denote the last member of the finite sequence $x$.
\item Let $\pi_{1}(X)$ denote the set $\{x_{0}: x\in X\}$.
\item Denote by $\pi_{l}(X)$ the set $\{x\upharpoonright l: x \in X\}.$
\end{enumerate}
\end{notation}

Note that because of the definition of $\mathcal{E}_{k}$ spaces, $\pi_{l}(X)\in \mathcal{E}_{l}.$
\begin{definition}
Let $X$ be a member of $\mathcal{E}_{k}$ and $s$ a finite aproximation of $X$. Write $\pi_{1}[X]$ as an increasing sequence $\{n_{0},n_{1},...,n_{j},...\}$. We will say that $s$ is the i-th \emph{full finite approximation} of $X$ if $s$ is the $\sqsubseteq$-least finite approximation of $X$ such that there exists $x \in s$ such that $\min x = n_{i}$. We will denote by $\mathfrak{a}^{k}_{i}(X)$ the i-th full finite approximation of $X$.
\end{definition}
Note that for every $i \in \omega$, if $x$ is the $\prec$-least member of $\mathfrak{a}^{k}_{i}(X) / \mathfrak{a}^{k}_{i-1}(X) $ then $x_{0}=n_{i}$. Also note that for every $x \in X$ such that $x_{0} = n_{i}$, there exists $Y \leq X$ such that $x$ is the $\prec$-least member of $\mathfrak{a}^{k}_{i}(Y) / \mathfrak{a}^{k}_{i-1}(Y) $.

\begin{lemma}\label{lem.11}
	For every $k \in \omega$ and $X \in \mathcal{E}_{k}$, there exists a $X' \in \mathcal{E}_{k+1}$ such that for every $X, Y \in \mathcal{E}_{k}$ and $Z \in \mathcal{E}_{k+1}$:
	
	\begin{enumerate}
		\item $\pi_{k}(X')=X$,
		\item $Y \leq X$ implies $Y' \leq X',$
		\item $Y \leq^{\ast} X$ implies $Y' \leq^{\ast} X'$ and
		\item $Z \leq ^{\ast} X'$ implies $\pi_{k}(Z) \leq ^{\ast} X$.
	\end{enumerate}
\end{lemma}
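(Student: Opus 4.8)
The plan is to define a canonical \emph{lift} $X\mapsto X'$ from $\mathcal{E}_{k}$ into $\mathcal{E}_{k+1}$ and then verify the four clauses, three of which reduce to monotonicity of this lift and the last to monotonicity of the projection $\pi_{k}$. The organizing observation is that clause (1) pins $X'$ down only through its fibres over length-$k$ nodes, while clauses (2)--(4) are precisely the assertions that $X\mapsto X'$ preserves $\le$ and $\le^{*}$ and that $\pi_{k}$ preserves $\le^{*}$. In particular, once $X'$ is built with $\pi_{k}(X')=X$, I would obtain (4) from the stand-alone fact that $\pi_{k}(Z)\le^{*}\pi_{k}(X')$ whenever $Z\le^{*}X'$, so that the last clause uses nothing about the specific choice of $X'$.

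\emph{Construction of the lift.} I would define $X'$ by recursion on $k$, exploiting the structural description (implicit in the fact that $\mathcal{E}_{k}$ arises as a projection of $\mathcal{E}_{k+1}$) that a member of $\mathcal{E}_{k+1}$ is determined by an infinite first-coordinate set together with, over each first coordinate, a member of $\mathcal{E}_{k}$ placed on the relabelled tail. For the base case $k=1$, where $X=\{a_{0}<a_{1}<\cdots\}\in\mathcal{E}_{1}$, set
\[
X'=\{(a,b)\in\ome{2}:a,b\in X\},
\]
i.e. relabel \emph{both} coordinates of $\ome{2}$ by $X$; one checks directly that $(i,j)\mapsto(a_{i},a_{j})$ is a $\prec$- and $\subset$-preserving tree isomorphism, so $X'\in\mathcal{E}_{2}$ and $\pi_{1}(X')=X$. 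For the inductive step I would keep $\pi_{1}(X')=\pi_{1}(X)$ and, over each first coordinate $n_{i}$ of $X$, let the fibre of $X'$ be the lift (from $\mathcal{E}_{k-1}$ to $\mathcal{E}_{k}$) of the corresponding fibre of $X$. The main obstacle lies exactly here: one cannot simply attach to each length-$k$ node \emph{all} of its admissible top-coordinate extensions, since this destroys the isomorphism type---already the naive guess $\{(a,b)\in\ome{2}:a\in X,\ b\ge a\}$ fails to be $\prec$-/$\subset$-isomorphic to $\ome{2}$---so the new coordinate must be relabelled coherently through the data defining $X$. Checking that the recursively assembled object is a genuine $\mathcal{E}_{k+1}$-tree, i.e. that the $\prec$-enumeration of the interleaved fibres matches that of $\ome{k+1}$, is the technical heart of the lemma.

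\emph{Clauses (1)--(3).} With the recursion in hand these follow by induction on $k$. Clause (1) holds by construction at the base and is propagated because $\pi_{k}(X')$ reassembles from the $\pi_{k-1}$-projections of the lifted fibres, which by induction return the fibres of $X$. For (2), if $Y\le X$ then $\pi_{1}(Y)\sse\pi_{1}(X)$ and each fibre of $Y$ is contained in the corresponding fibre of $X$; the base case is immediate from $Y\times Y\sse X\times X$, and the inductive step uses that the lift of a subfibre is a subfibre. Clause (3) is the analogue for $\le^{*}=\sse^{*k}$: unfolding $\Fin^{\otimes k}$ recursively, $Y\sse^{*k}X$ says that for all but finitely many first coordinates $n$ one has $n\in\pi_{1}(X)$ and the fibre of $Y$ over $n$ is $\sse^{*(k-1)}$ the fibre of $X$; the inductive hypothesis together with the base-case computation (a finite change in $X$ induces a $\Fin^{\otimes 2}$ change in $X'$) then yields $Y'\sse^{*(k+1)}X'$.

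\emph{Clause (4).} Here I would establish that $\pi_{k}$ is $\le^{*}$-monotone, whence $Z\le^{*}X'$ gives $\pi_{k}(Z)\le^{*}\pi_{k}(X')=X$. Writing $I=Z\setminus W$ for $Z\sse^{*(k+1)}W$, observe that if $x\in\pi_{k}(Z)\setminus\pi_{k}(W)$ then the whole $Z$-fibre over $x$---which is infinite, since every length-$k$ node of a member of $\mathcal{E}_{k+1}$ branches infinitely---is disjoint from $W$, so $x$ lies in $N(I):=\{x\in\ome{k}: I \text{ has infinitely many top-coordinate extensions over } x\}$. It therefore suffices to prove $I\in\Fin^{\otimes(k+1)}\Rightarrow N(I)\in\Fin^{\otimes k}$, which I would do by induction on $k$: sectioning $I$ over its first coordinate reduces the statement at $k+1$ to the statement at $k$ applied to each of the cofinitely many sections lying in $\Fin^{\otimes(k+1)}$. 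Then $\pi_{k}(Z)\setminus\pi_{k}(W)\sse N(I)\in\Fin^{\otimes k}$ gives $\pi_{k}(Z)\sse^{*k}\pi_{k}(W)$, which completes the argument.
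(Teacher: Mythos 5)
Your proposal is correct, and its skeleton is the same as the paper's: the lift is built by recursion on $k$, keeping the first-coordinate set fixed and lifting each fibre one dimension down, so that the innermost step is the ``fibre square.'' Indeed, the paper's base case (from $\mathcal{E}_2$ into $\mathcal{E}_3$, phrased via full finite approximations: each node $x$ is extended by the maxima of the later nodes of $X$ lying in the same fibre) produces exactly the object $\{(a,b):a,b\in F,\ a\le b\}$ applied fibrewise; you merely start the recursion one dimension lower, at $\mathcal{E}_1\to\mathcal{E}_2$, which makes the base case cleaner. Clauses (1)--(3) are then verified by the same induction in both proofs, at essentially the same level of detail. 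Where you genuinely depart from the paper is clause (4). The paper proves (4) by a terse contrapositive tied to the particular construction (if $X/\pi_{k}(Z)$ is infinite then, by the construction of $X'$, so is $X'/Z$), whereas you isolate a stand-alone monotonicity fact: $Z\sse^{*(k+1)}W$ implies $\pi_{k}(Z)\sse^{*k}\pi_{k}(W)$, reduced to the ideal-theoretic lemma that $I\in\Fin^{\otimes(k+1)}$ implies $N(I)\in\Fin^{\otimes k}$, proved by sectioning over the first coordinate. This argument is sound --- the fibre of $Z$ over any $x\in\pi_{k}(Z)$ really is infinite, because $\mathcal{E}_{k+1}$-trees preserve levels --- and it buys something concrete: clause (4) becomes independent of the choice of lift, using only $\pi_{k}(X')=X$, and the same fact transfers verbatim to the barrier spaces $\mathcal{E}_B$ in the theorem that follows this lemma in the paper. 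It is also, frankly, more rigorous than the paper's one-line treatment of (4).

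One caveat applies equally to you and to the paper: the central verification that the assembled object actually belongs to $\mathcal{E}_{k+1}$ --- that the $\prec$-interleaving of the lifted fibres reproduces that of $\ome{k+1}$ --- is left at the level of assertion (the paper simply writes ``Note that $X'\in\mathcal{E}_{k+2}$''). You at least name this as the technical heart and correctly diagnose, via the counterexample $\{(a,b)\in\ome{2}:a\in X,\ b\ge a\}$, why a naive lift fails; but in a fully detailed write-up this coherence check, which uses the $\prec$-preservation of $\widehat{X}$ to compare new top coordinates across distinct fibres, is the one place where work remains for both arguments.
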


\begin{proof}
	The proof will be by induction on $k \in \omega$. Fix $k=2$. Let $X$ be a member of $\mathcal{E}_{2}$, write $X = \{ x_{0},x_{1},...,x_{i},... \},$ with $x_{i} \prec x_{i+1}$ for every $i \in \omega.$ Write $\pi_{1}(X)=\{n_{m}:m \in \omega\}$ where $n_{m}< n_{m+1}$ for every $m \in \omega$. We want to construct a member of $ \mathcal{E}_{3} $ such that $\pi_{2}[X']=X$. We will construct such $X'$ step by step by extending members of full finite approximations. Note that $\mathfrak{a}^{2}_{1}(X)=\{x_{0}\}$. In this case we extend $x_{0}$ with $\bar{x}^{0}_{0}= {x_{0}} ^{\frown} \max x_{0}$. Now fix $i>0$. Let $l_{i}= \sum_{j \leq i+1}j$, then $\mathfrak{a}^{2}_{i}(X)=\{ x_{0},x_{1},...,x_{l_{i}-1} \}$ and $\mathfrak{a}^{2}_{i}(X) / \mathfrak{a}^{2}_{i-1}(X)=\{  x_{l_{i}-i-1},...,x_{l_{i}-1}\} $. Note that for every $m \in [0,i+1)$, $\pi_{1}(x_{l_{i}-i-1+m})=n_{m}$. Given $x \in \mathfrak{a}^{2}_{i}(X)$, there exists $m \in [0,i+1)$ such that $\pi_{1}(x)=n_{m}$. We extend $x$ to $\bar{x}^{i}=x ^{\frown} (\max x_{l_{i}-i-1+m})$. Note that for every $i \in \omega$ and $j \in [0,l_{i})$, $\pi_{2}(\bar{x}^{i}_{j})=x_{j}$. Let $X'=\{ \bar{x}^{i}_{j}: i \in \omega, j \in [0,l_{i}) \}$. Note that by the definition of $X'$, $X' \in \mathcal{E}_{3}$ and $\pi_{2}(X')=X$. We will prove that if $Y \leq X$ then $Y' \leq X'$. Take $y \in Y'$ then there are $ j,k \in \omega $ with $j \leq k$ such that $y= {y_{j}} ^{\frown} ( \max y_{k} ) $. Note that by construction, $\pi_{1}(y_{j})=\pi_{1}(y_{k})$. Since $Y \leq X$, there are some $j',k' \in \omega$ with $j'<k'$ such that $x_{j'}=y_{j} $ and $x_{k'}=y_{k}$. There exists an $i \in \omega$ such that $x_{k'} \in \mathfrak{a}^{2}_{i}(X) / \mathfrak{a}^{2}_{i-1}(X)$. Then $\bar{x}^{i}_{j'}={x_{j'}} ^{\frown} (\max x_{k'}) = y$ and $\bar{x}^{i}_{j'} \in X'$. So, $Y' \leq X'$. By similar arguments, $Y \leq^{\ast} X$ implies $Y' \leq^{\ast} X'.$  Now suposse that $Z \leq ^{\ast} X'$, then $| X' / Z| < \omega$. Note that if $| X / \pi_{2}(Z)| > \omega$, by the construction of $X'$, $| X' / Z| > \omega$. Therefore $\pi_{2}(Z) \leq^{\ast} X$.\\

	Let $k>2$ be a natural number. Now assume we have the conclusion for every member of $[2,k]$ and we want to prove it for $k+1$. Fix $X \in \mathcal{E}_{k+1}$ and write $\pi_{1}(X)=\{ n_{0},n_{1},...,n_{m},... \},$ with $n_{m}< n_{m+1}$ for every $m \in \omega$. Write $X(m)$ to denote the subtree of $X$ such that for every $x \in X(m)$, $\pi_{1}(x)=n_{m}$. Note that every $X(m)$ contains a member of $\mathcal{E}_{k}.$ Now, for every $m \in \omega$ let $X(m) \upharpoonright (0,...,k]$ denote the collection of $x \upharpoonright (0,...,k]$ where $x$ is a member of $X(m)$. Note that $X(m) \upharpoonright (0,...,k] \in \mathcal{E}_{k}$. By induction hypothesis, for every $m \in \omega$ we can extend $X(m) \upharpoonright (0,...,k]$ to a $X(m)' \in \mathcal{E}_{k+1}$ such that $\pi_{k}(X(m)')= X(m) \upharpoonright (0,...,k] $. Let $X'= \bigcup_{m \in \omega} \{ (n_{m})^{\frown} x : x \in X(m)' \}$. Note that $X' \in \mathcal{E}_{k+2}$. Since for every $m \in \omega$, $\pi_{k}(X(m)')= X(m) \upharpoonright (0,...,k] $, $\pi_{k+1}(X')=X$. Take $X,Y \in \mathcal{E}_{k+1}$ such that $Y \leq X$. Take $y \in Y'$ then exists $n_{m} \in \omega$ such that $y = (n_{m}) ^{\frown} z$ with $z \in  Y(m)'.$ Then $(z\upharpoonright (0,k]\in Y(m)$ and since $Y \leq X$ there exists an $l \in \omega$ such that $Y(m) \subset X(l).$ Hence by hypothesis induction $Y(m)' \leq X(l)'$  and $y \in X'.$ By similar arguments, $Y \leq^{\ast} X$ implies $Y' \leq^{\ast} X'.$ Now suposse that $Z \leq ^{\ast} X'$, then $| X' / Z| < \omega$. Note that if $| X / \pi_{k+}(Z)| > \omega$, by the construction of $X'$, $| X' / Z| > \omega$. Therefore $\pi_{k+1}(Z) \leq^{\ast} X$.
\end{proof}

\begin{proposition}\label{prop.12}
	For every $k \in \omega$ such that $k > 1$, $\mathfrak{t}_{\mathcal{E}_{k+1}} \leq \mathfrak{t}_{\mathcal{E}_{k}} .$
\end{proposition}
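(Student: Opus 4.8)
The plan is to transport a minimal tower from $\mathcal{E}_k$ up to $\mathcal{E}_{k+1}$ using the assignment $X \mapsto X'$ furnished by Lemma \ref{lem.11}, and to check that this assignment carries towers to towers without increasing cardinality. Concretely, I would fix a tower $\mathcal{F} \subseteq \mathcal{E}_k$ with $|\mathcal{F}| = \mathfrak{t}_{\mathcal{E}_k}$; that is, $\mathcal{F}$ is linearly ordered by $\ge^*$ and has no pseudointersection. For each $X \in \mathcal{F}$ let $X' \in \mathcal{E}_{k+1}$ be the member provided by Lemma \ref{lem.11}, and set $\mathcal{G} = \{X' : X \in \mathcal{F}\}$. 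The goal is to show that $\mathcal{G}$ is a tower in $\mathcal{E}_{k+1}$ of cardinality $\mathfrak{t}_{\mathcal{E}_k}$, which immediately yields $\mathfrak{t}_{\mathcal{E}_{k+1}} \le |\mathcal{G}| = \mathfrak{t}_{\mathcal{E}_k}$.

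First I would verify that $\mathcal{G}$ has the right cardinality and is linearly ordered. Injectivity of the assignment follows from clause (1) of Lemma \ref{lem.11}, since $\pi_k(X') = X$ recovers $X$ from $X'$; hence $|\mathcal{G}| = |\mathcal{F}| = \mathfrak{t}_{\mathcal{E}_k}$. For linearity, given any $X, Y \in \mathcal{F}$, comparability in $\mathcal{F}$ gives, say, $Y \le^* X$, and clause (3) then yields $Y' \le^* X'$; thus $\mathcal{G}$ is linearly ordered by $\ge^*$.

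The heart of the argument is that $\mathcal{G}$ has no pseudointersection, and this is exactly what clause (4) is designed to supply. Suppose toward a contradiction that some $Z \in \mathcal{E}_{k+1}$ is a pseudointersection of $\mathcal{G}$, so $Z \le^* X'$ for every $X \in \mathcal{F}$. Clause (4) gives $\pi_k(Z) \le^* X$ for every $X \in \mathcal{F}$, and $\pi_k(Z) \in \mathcal{E}_k$ by the earlier remark that projections of members of $\mathcal{E}_{k+1}$ lie in $\mathcal{E}_k$. Hence $\pi_k(Z)$ would be a pseudointersection of $\mathcal{F}$, contradicting that $\mathcal{F}$ is a tower. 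Therefore $\mathcal{G}$ is a tower in $\mathcal{E}_{k+1}$, completing the proof.

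Since Lemma \ref{lem.11} has already done the structural work, I do not anticipate a genuine obstacle; the only care needed is bookkeeping, namely confirming that clauses (1), (3), and (4) are invoked in the correct order and that $\pi_k(Z)$ indeed lands in $\mathcal{E}_k$. In effect the entire weight of this proposition rests on the four properties of the extension $X \mapsto X'$, so the body of the argument is the short transport computation above.
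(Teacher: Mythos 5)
Your proof is correct and takes essentially the same approach as the paper: both arguments rest entirely on the lifting $X \mapsto X'$ from Lemma \ref{lem.11}, using clause (3) for $\le^*$-linearity of the lifted family and clause (4) to pull a pseudointersection $Z$ of $\mathcal{G}$ back to the pseudointersection $\pi_k(Z)$ of $\mathcal{F}$. The only difference is packaging --- you transport a minimal tower of $\mathcal{E}_k$ upward, while the paper argues contrapositively that every $\supseteq^*$-chain in $\mathcal{E}_k$ of cardinality below $\mathfrak{t}_{\mathcal{E}_{k+1}}$ has a pseudointersection --- which is the same argument.
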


\begin{proof}
 Fix $k\in \omega$.  Let $\kappa < \mathfrak{t}_{\mathcal{E}_{k+1}}$ be a cardinal, we will prove that $\kappa < \mathfrak{t}_{\mathcal{E}_{k}}$. Let $\mathcal{F} \subseteq \mathcal{E}_{k}$ be a family linearly ordered by $\subseteq^{\ast}$. By the last Lemma for every $X \in \mathcal{E}_{k}$ there exists an $X' \in \mathcal{E}_{k+1}$ with properties $1$,$2$ and $3.$ Let $\mathcal{G}=\{X':X \in \mathcal{F}\}$ and note that $\mathcal{G}$ is linearly ordered by $\leq^{\ast}$. Since $\kappa < \mathfrak{t}_{\mathcal{E}_{k+1}}$, $\mathcal{G}$ has a pseudointersection $Z$. By 3 of the last Lemma, $\pi_{k}(Z)$ is a pseudointersection of $\mathcal{F}$. Then $\kappa \leq \mathfrak{t}_{\mathcal{E}_{k}}$. Therefore $\mathfrak{t}_{\mathcal{E}_{k+1}} \leq \mathfrak{t}_{\mathcal{E}_{k}} .$
\end{proof}

\begin{theorem}
	For every $k > 2$, $\mathfrak{t}_{\mathcal{E}_{k}}=\mathfrak{p}_{\mathcal{E}_{k}}= \omega_{1}$.
\end{theorem}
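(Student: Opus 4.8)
The plan is to sandwich both cardinals between $\omega_1$ and $\omega_1$ by assembling the results already established. The upper bound comes from the monotonicity in Proposition \ref{prop.12}: iterating $\mathfrak{t}_{\mathcal{E}_{j+1}}\le\mathfrak{t}_{\mathcal{E}_{j}}$ down from $j=k-1$ to $j=2$ gives $\mathfrak{t}_{\mathcal{E}_{k}}\le\mathfrak{t}_{\mathcal{E}_{2}}$ for every $k>2$. Since the two-dimensional case was shown to satisfy $\mathfrak{t}_{\mathcal{E}_{2}}=\omega_1$ (resting on Theorem \ref{thm.SZ}), we obtain $\mathfrak{t}_{\mathcal{E}_{k}}\le\omega_1$. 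Feeding this into the general inequality $\mathfrak{p}_{\mathcal{R}}\le\mathfrak{t}_{\mathcal{R}}$, valid for every $\sigma$-closed topological Ramsey space, yields $\mathfrak{p}_{\mathcal{E}_{k}}\le\mathfrak{t}_{\mathcal{E}_{k}}\le\omega_1$.

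For the reverse inequality I would show $\mathfrak{p}_{\mathcal{E}_{k}}\ge\omega_1$, that is, that every countable family with the SFIP has a pseudointersection; this is the point at which $\sigma$-closure does the work. Since $(\mathcal{E}_{k},\sse^{*k})$ and $\mathcal{P}(\om^{k})/\Fin^{\otimes k}$ have isomorphic separative quotients, it suffices to argue in the latter. Given a countable SFIP family $\{X_n:n<\om\}$, I would form the genuine intersections $B_n=X_0\cap\dots\cap X_n$; these form a $\subseteq$-decreasing sequence, and the SFIP forces each $B_n$ to be $\Fin^{\otimes k}$-positive, because a common $\sse^{*k}$-lower bound of $X_0,\dots,X_n$ is almost contained in $B_n$ (the difference is a finite union of sets in $\Fin^{\otimes k}$, hence in $\Fin^{\otimes k}$). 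As $\mathcal{P}(\om^{k})/\Fin^{\otimes k}$ is $\sigma$-closed, the $\subseteq^{*k}$-decreasing chain $\{B_n\}$ has a lower bound $Y$, which satisfies $Y\sse^{*k}X_n$ for every $n$; refining $Y$ to a member of $\mathcal{E}_{k}$ by density preserves this, producing the required pseudointersection. Hence no countable family witnesses the pseudointersection number, and $\mathfrak{p}_{\mathcal{E}_{k}}\ge\omega_1$.

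Combining the two bounds gives $\omega_1\le\mathfrak{p}_{\mathcal{E}_{k}}\le\mathfrak{t}_{\mathcal{E}_{k}}\le\omega_1$, so $\mathfrak{p}_{\mathcal{E}_{k}}=\mathfrak{t}_{\mathcal{E}_{k}}=\omega_1$ for all $k>2$, as claimed; note that the lower-bound argument is uniform in $k$ and is exactly what underlies the two-dimensional proposition as well. The real mathematical content sits in Proposition \ref{prop.12} (through Lemma \ref{lem.11}) and in Theorem \ref{thm.SZ}, so the present proof is essentially bookkeeping. The only step that requires a little care in the writing is making the passage from the set-intersections $B_n$ back to genuine members of $\mathcal{E}_{k}$, ensuring that the $\sigma$-closure is applied to a chain that legitimately represents conditions in the forcing; I expect this transfer, rather than any delicate combinatorics, to be the main thing to get right.
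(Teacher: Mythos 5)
Your proof is correct and its skeleton is the same as the paper's: the upper bound comes from iterating Proposition \ref{prop.12} down to $\mathfrak{t}_{\mathcal{E}_{2}}=\omega_1$ (which rests on Theorem \ref{thm.SZ}) and invoking $\mathfrak{p}_{\mathcal{E}_{k}}\le\mathfrak{t}_{\mathcal{E}_{k}}$, while the lower bound rests on $\sigma$-closure. The treatments of the lower bound differ, though, in a way worth recording. The paper establishes only $\omega_1\le\mathfrak{t}_{\mathcal{E}_{k}}$, which is immediate from $\sigma$-closure of $(\mathcal{E}_{k},\le^{\ast})$, and then concludes that both invariants equal $\omega_1$; since the general inequality runs $\mathfrak{p}_{\mathcal{E}_{k}}\le\mathfrak{t}_{\mathcal{E}_{k}}$, the conclusion for $\mathfrak{p}_{\mathcal{E}_{k}}$ strictly requires the further fact $\omega_1\le\mathfrak{p}_{\mathcal{E}_{k}}$, which the paper leaves implicit. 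Your argument supplies exactly this stronger fact: given a countable SFIP family you pass to the genuine intersections $B_n$, check that each is $\Fin^{\otimes k}$-positive because a common $\sse^{*k}$-lower bound of $X_0,\dots,X_n$ is almost contained in $B_n$ (ideals being closed under finite unions), apply $\sigma$-closure of $\mathcal{P}(\om^{k})/\Fin^{\otimes k}$ to the decreasing chain $(B_n)_{n<\om}$, and refine the resulting positive set to a member of $\mathcal{E}_{k}$ by density. That extra step is genuinely needed, and it is exactly where the concrete set-theoretic structure of the quotient matters: in an abstract $\sigma$-closed poset a countable family with the SFIP need not have a pseudointersection, since there is no analogue of the sets $B_n$. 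So your write-up follows the paper's route but is slightly more complete than the paper's own proof.
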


\begin{proof}
Let $k>2$ be given. Since $(\mathcal{E}_{k}, \leq^{\ast})$ is a $\sigma$-closed partial order $\omega_{1} \leq \mathfrak{t}_{\mathcal{E}_{k}}$. By the last proposition,  $\mathfrak{p}_{\mathcal{E}_{k}} \leq \mathfrak{t}_{\mathcal{E}_{k}} \leq \mathfrak{t}_{\mathcal{E}_{2}}=  \omega_{1}$. Therefore $\mathfrak{t}_{\mathcal{E}_{k}}=\mathfrak{p}_{\mathcal{E}_{k}}= \omega_{1}$.
\end{proof}

In fact, by a similar proof, we obtain the following:

\begin{theorem}
Let $B$ be a uniform barrier of  countable rank at least $2$.
Then $\mathfrak{t}_{\mathcal{E}_B} \leq \mathfrak{t}_{\mathcal{E}_{2}} .$
\end{theorem}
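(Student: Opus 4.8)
The plan is to follow the proof scheme of Proposition \ref{prop.12} essentially verbatim, replacing the pair of consecutive spaces $(\mathcal{E}_k,\mathcal{E}_{k+1})$ by the pair $(\mathcal{E}_2,\mathcal{E}_B)$. The entire argument reduces to establishing the analogue of Lemma \ref{lem.11}: a lifting operation $X\mapsto X'$ sending members of $\mathcal{E}_2$ into members of $\mathcal{E}_B$, together with a projection $\pi:\mathcal{E}_B\to\mathcal{E}_2$, satisfying the four properties (1) $\pi(X')=X$; (2) $Y\le X$ implies $Y'\le X'$; (3) $Y\le^* X$ implies $Y'\le^* X'$; and (4) $Z\le^* X'$ implies $\pi(Z)\le^* X$. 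Everything else is formal.

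To obtain the projection I would exploit that $B$ is a uniform barrier of rank at least $2$: the length-$2$ truncations of members of $B$ yield, after reindexing by the well-order $\prec$, the rank-$2$ structure underlying $\mathcal{E}_2$, so that restricting each node of a member $X\in\mathcal{E}_B$ to its first two coordinates produces a member $\pi(X)\in\mathcal{E}_2$. The lift $X\mapsto X'$ would be defined exactly as in the inductive step of Lemma \ref{lem.11}: starting from the full finite approximations of $X$, each node is extended along the remaining levels of the tree governed by $B$ by repeating its maximal value. This keeps the resulting object $\prec$-order preserving and proper-initial-segment preserving, hence a genuine member of $\mathcal{E}_B$ projecting onto $X$ under $\pi$.

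Once this lemma is available, the theorem follows immediately. Given $\kappa<\mathfrak{t}_{\mathcal{E}_B}$ and a family $\mathcal{F}\subseteq\mathcal{E}_2$ of cardinality $\kappa$ linearly ordered by $\subseteq^{*2}$, set $\mathcal{G}=\{X':X\in\mathcal{F}\}$. Properties (2) and (3) make $\mathcal{G}$ a $\le^*$-linearly ordered family in $\mathcal{E}_B$ of the same cardinality $\kappa<\mathfrak{t}_{\mathcal{E}_B}$, so $\mathcal{G}$ has a pseudointersection $Z\in\mathcal{E}_B$; property (4) then shows $\pi(Z)$ is a pseudointersection of $\mathcal{F}$. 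Thus no tower in $\mathcal{E}_2$ has size $<\mathfrak{t}_{\mathcal{E}_B}$, which gives $\mathfrak{t}_{\mathcal{E}_B}\le\mathfrak{t}_{\mathcal{E}_2}$. Since $\mathfrak{t}_{\mathcal{E}_2}=\om_1$ by Theorem \ref{thm.SZ}, and $(\mathcal{E}_B,\le^*)$ is $\sigma$-closed so that $\om_1\le\mathfrak{t}_{\mathcal{E}_B}$, one in fact concludes $\mathfrak{t}_{\mathcal{E}_B}=\om_1$.

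The main obstacle is verifying property (4) in the general barrier setting, as it is the step transferring almost-reduction back down through $\pi$. As in Lemma \ref{lem.11}, the key point is that $Z\le^* X'$ means $X'$ omits only finitely much of $Z$; because $X'$ was built by padding each node of $X$ with repeated maximal values along the higher levels of $B$, any infinite part of $X$ absent from $\pi(Z)$ would force an infinite part of $X'$ to be absent from $Z$, a contradiction. Making this counting argument precise for an arbitrary uniform barrier of countable rank $\ge 2$ — where the tree of initial segments no longer has uniform finite height, so the induction on rank must be carried out using the recursive structure of $B$ rather than a fixed number of coordinates — is where the bulk of the work lies, but it is a routine, if notation-heavy, adaptation of the rank-$(k+1)$ induction step already completed in Lemma \ref{lem.11}.
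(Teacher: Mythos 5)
Your proposal is correct and follows essentially the same route as the paper: the paper's own proof consists precisely of asserting that for each $X\in\mathcal{E}_2$ there is an $X'\in\mathcal{E}_B$ satisfying properties (1)--(4) of Lemma \ref{lem.11} (with $k=2$), and then invoking the argument of Proposition \ref{prop.12} to conclude $\mathfrak{t}_{\mathcal{E}_B}\le\mathfrak{t}_{\mathcal{E}_2}$. Your fleshing out of the lift-by-padding construction, the truncation projection, and the transfer of almost-reduction in property (4) supplies more detail than the paper itself records, and your concluding observation that $\sigma$-closedness forces $\mathfrak{t}_{\mathcal{E}_B}=\om_1$ is a correct bonus consistent with the surrounding results.
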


\begin{proof}
Similarly to the proof of 
Lemma \ref{lem.11}, in fact, for each $X\in\mathcal{E}_2$, there is an $X'\in\mathcal{E}_B$ such that  (1) -- (4) of that lemma hold, where $k=2$.
Then similarly to 
Proposition \ref{prop.12},
it follows that $\mathfrak{t}_{\mathcal{E}_B} \leq \mathfrak{t}_{\mathcal{E}_{2}}$.
\end{proof}

\subsection{The Carlson-Simpson Space}\label{subsec.CS}

We finish this section with a proof that the Carlson-Simpson space also has tower number equal to $\om_1$.
Recall from Subsection \ref{subsec.CS} the 
 Carlson-Simpson space 
 $\mathcal{E}_{\infty}$, which   is not related with high dimensional Ellentuck spaces.
 The following proposition is a consequence of a Proposition from Carlson that appears in \cite{matet1986}, where Carlson prove that there is a family 
 $\{X_{\alpha}:\alpha< \omega_1\}\subseteq \mathcal{E}_{\infty}$ such that there are not $X \in \mathcal{E}_{\infty}$ such that for every $\alpha< \omega_1,$ $X \leq ^{\ast}X_\alpha$.
 
\begin{proposition}
$ \mathfrak{p}_{\mathcal{E}_{\infty}}= \mathfrak{t}_{\mathcal{E}_{\infty}}= \omega_1.$
\end{proposition}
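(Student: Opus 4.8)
The plan is to sandwich both cardinals between $\omega_1$ and $\omega_1$, using $\sigma$-closedness for the lower bound and Carlson's construction for the upper bound.

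For the lower bound, I would invoke Matet's theorem that $(\mathcal{E}_{\infty}, \leq^{\ast})$ is a $\sigma$-closed partial order. In any $\sigma$-closed order every countable subfamily with the SFIP has a pseudointersection: given $\{X_n : n < \omega\}$ whose finite subfamilies have common $\leq^{\ast}$-lower bounds, one builds a $\leq^{\ast}$-decreasing sequence $\langle Y_n : n < \omega\rangle$ with each $Y_n \leq^{\ast} X_n$ and takes the lower bound guaranteed by $\sigma$-closedness. Consequently no family witnessing $\mathfrak{p}_{\mathcal{E}_{\infty}}$, and in particular no tower, can be countable, so $\omega_1 \leq \mathfrak{p}_{\mathcal{E}_{\infty}}$. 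Since the inequality $\mathfrak{p}_{\mathcal{R}} \leq \mathfrak{t}_{\mathcal{R}}$ holds for every $\sigma$-closed topological Ramsey space (as noted after Definition \ref{defn.p_Rt_R}), this already yields $\omega_1 \leq \mathfrak{p}_{\mathcal{E}_{\infty}} \leq \mathfrak{t}_{\mathcal{E}_{\infty}}$.

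For the upper bound, I would appeal to the construction of Carlson recorded in \cite{matet1986}, which produces a family $\{X_{\alpha} : \alpha < \omega_1\} \subseteq \mathcal{E}_{\infty}$ admitting no $X \in \mathcal{E}_{\infty}$ with $X \leq^{\ast} X_{\alpha}$ for all $\alpha < \omega_1$. Presenting this family as a chain linearly ordered by $\geq^{\ast}$ with no pseudointersection, it is precisely a tower of cardinality $\omega_1$, so $\mathfrak{t}_{\mathcal{E}_{\infty}} \leq \omega_1$. Combining the two bounds gives $\omega_1 \leq \mathfrak{p}_{\mathcal{E}_{\infty}} \leq \mathfrak{t}_{\mathcal{E}_{\infty}} \leq \omega_1$, whence all three quantities coincide at $\omega_1$.

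The main obstacle lies entirely in the upper bound, namely Carlson's construction of an $\omega_1$-length $\leq^{\ast}$-decreasing sequence that fails to have a pseudointersection; the $\sigma$-closedness driving the limit stages is exactly what forces any such tower to have length at least $\omega_1$, and the delicate point is the diagonalization against all potential common lower bounds, which I would take directly from \cite{matet1986}. If the cited family is only asserted to lack a common lower bound without being explicitly a $\leq^{\ast}$-chain, one extra observation suffices: either verify that the recursive construction naturally yields a $\leq^{\ast}$-decreasing sequence, so that it is a tower outright; or else note that it at least witnesses $\mathfrak{p}_{\mathcal{E}_{\infty}} \leq \omega_1$, and extract from it a $\geq^{\ast}$-cofinal decreasing subchain of length $\omega_1$ to bound $\mathfrak{t}_{\mathcal{E}_{\infty}}$ as well.
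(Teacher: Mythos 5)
Your overall strategy coincides with the argument the paper intends: the paper offers no proof beyond citing Matet's theorem that $(\mathcal{E}_{\infty},\leq^{\ast})$ is $\sigma$-closed together with Carlson's proposition from \cite{matet1986}, and your sandwich $\omega_1\leq\mathfrak{p}_{\mathcal{E}_{\infty}}\leq\mathfrak{t}_{\mathcal{E}_{\infty}}\leq\omega_1$ is exactly that outline made explicit. The upper bound is sound, and your worry about whether the cited family is a chain is well placed: Carlson's construction in \cite{matet1986} is indeed a $\leq^{\ast}$-decreasing $\omega_1$-sequence with no $\leq^{\ast}$-lower bound, i.e.\ a tower, so your primary option is the correct resolution. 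Your two fallbacks, however, would not survive scrutiny: a family that merely lacks a common lower bound need not have the SFIP, hence witnesses nothing about $\mathfrak{p}_{\mathcal{E}_{\infty}}$, and one cannot in general extract a $\geq^{\ast}$-decreasing cofinal chain from an unstructured family.

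The genuine gap is in your lower bound for the pseudointersection number. The principle ``in any $\sigma$-closed order every countable family with the SFIP has a pseudointersection'' is false. Your induction needs, at stage $n+1$, a common $\leq^{\ast}$-lower bound of $Y_n$ and $X_{n+1}$, but the SFIP of $\{X_n:n<\omega\}$ only supplies lower bounds of finite subfamilies of the $X_n$'s, and none of these need be compatible with the $Y_n$ already chosen. Concretely, take pairwise incomparable elements $x_n$ $(n<\omega)$ and, for each finite $F\subseteq\omega$, an element $y_F$ lying below exactly $\{x_n:n\in F\}$, with distinct $y_F$'s incompatible (fatten each $y_F$ into its own copy of an atomless $\sigma$-closed order if atoms bother you): this order is $\sigma$-closed, $\{x_n:n<\omega\}$ has the SFIP, yet it has no lower bound. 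What $\sigma$-closedness gives directly is only $\omega_1\leq\mathfrak{t}_{\mathcal{E}_{\infty}}$, since a countable $\geq^{\ast}$-chain has a decreasing cofinal $\omega$-subsequence. To get $\omega_1\leq\mathfrak{p}_{\mathcal{E}_{\infty}}$, which your sandwich requires, you must use the structure of $\mathcal{E}_{\infty}$ itself: given a countable SFIP family $\{E_n:n<\omega\}$, first convert a common $\leq^{\ast}$-lower bound of $E_0,\dots,E_n$ into a genuine common coarsening lying in $\mathcal{E}_{\infty}$ (all but finitely many of its classes are unions of $E_i$-classes for every $i\leq n$, and the union of the finitely many exceptional classes is then itself a union of $E_i$-classes, so keeping the good classes and merging the rest into one class gives a common coarsening with infinitely many classes); it follows that the finest common coarsening $M_n$ of $E_0,\dots,E_n$ lies in $\mathcal{E}_{\infty}$, the sequence $\langle M_n:n<\omega\rangle$ is $\leq$-decreasing, and $\sigma$-closedness applied to it yields the desired pseudointersection. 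With that argument substituted for the false general principle, your proof is complete, and it is in substance the argument the paper leaves to the reader.
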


This is interesting in light of the following proposition.

\begin{proposition}
The partial order $(\mathcal{E}_{\infty},\leq^{\ast})$ forces a Ramsey ultrafilter.
\end{proposition}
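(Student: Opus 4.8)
The plan is to show that the ultrafilter $\mathcal{U}_{\mathcal{E}_\infty}$ forced by $(\mathcal{E}_\infty,\leq^*)$ is selective, which by the classical characterization is equivalent to being Ramsey. The base set of the ultrafilter is $\mathcal{AE}_{\infty,1}$, the collection of first approximations, which are equivalence relations determined up to their first nontrivial step; concretely these correspond to the minimal representatives, so the base set can be identified with $\omega$. The cleanest route is to verify selectivity directly: given a generic $G$ and a $\subseteq$-decreasing sequence $\langle U_n\rangle_{n<\omega}$ of members of $\mathcal{U}_{\mathcal{E}_\infty}$, I must produce $X\in\mathcal{U}_{\mathcal{E}_\infty}$ that is a pseudointersection meeting each difference set $U_n\setminus U_{n+1}$ in at most one point.

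First I would make precise how $\mathcal{AE}_{\infty,1}$ is identified with $\omega$ and how a generic member $E\in G$ induces the set $p(E)$ of minimal representatives, so that $\mathcal{U}_{\mathcal{E}_\infty}$ is generated by the sets $p(E)$ (or the first-approximation restrictions $\mathcal{AE}_{\infty,1}\restriction E$) for $E\in G$. The key structural fact to exploit is the Abstract Nash-Williams Theorem (Theorem \ref{NashWilliams}): since $(\mathcal{E}_\infty,\leq,r)$ is a topological Ramsey space, any coloring of first approximations, and more generally any partition of $\mathcal{AE}_{\infty,1}$ into finitely many pieces, is decided on $[a,B]$ for some $B$; genericity then hands us a homogeneous member in $G$. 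This is exactly the mechanism already used in Lemma \ref{Rdegsame} to show $\mathcal{U}_{\mathcal{E}_\infty}$ is an ultrafilter, so the same engine yields the partition relation $\mathcal{U}_{\mathcal{E}_\infty}\rightarrow(\mathcal{U}_{\mathcal{E}_\infty})^1_l$ and, with an application to pairs, the Ramsey property $\mathcal{U}_{\mathcal{E}_\infty}\rightarrow(\mathcal{U}_{\mathcal{E}_\infty})^2_2$.

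The heart of the argument is translating the coarsening order on partitions into the selective-pseudointersection condition on the minimal-representative sets. For selectivity I would, given the decreasing sequence $\langle U_n\rangle$, use a density/genericity argument: the set of conditions $E\in\mathcal{E}_\infty$ whose minimal representatives interleave the $U_n\setminus U_{n+1}$ one-point-at-a-time is dense below any condition forcing $U_n\in\mathcal{U}_{\mathcal{E}_\infty}$ for all $n$, because refining a partition (passing to a coarser equivalence relation with $\leq$) lets us drop all but one representative from each interval $U_n\setminus U_{n+1}$. Genericity then selects such an $E$ into $G$, and $p(E)$ is the desired selector. Equivalently, and perhaps more transparently, one invokes the general fact that forcing with a topological Ramsey space under $\leq^*$ produces an ultrafilter with complete combinatorics that is selective on its base set whenever the space's pigeonhole at the level of first approximations is the trivial one-color principle; here the first-approximation structure of $\mathcal{E}_\infty$ is a bare countable set with no relations, so colorings of points are absorbed and colorings of pairs are reduced to the Nash-Williams theorem on $\mathcal{AE}_{\infty,2}$.

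\textbf{The main obstacle} I anticipate is the bookkeeping identifying the base set $\mathcal{AE}_{\infty,1}$ with $\omega$ in a way compatible with both the coarsening order $\leq$ and the selector condition, since unlike the Ellentuck or high-dimensional spaces the members of $\mathcal{E}_\infty$ are partitions rather than subsets, and the natural ``point'' associated to a first approximation is a minimal representative whose value depends on the whole partition. I would handle this by working with the rigid-surjection presentation recalled in Subsection \ref{subsec.CS}: a generic $E$ corresponds to a rigid surjection $f_E:\omega\to\omega$, its minimal representatives are precisely the values $\min f_E^{-1}(i)$, and the coarsening order becomes composition of rigid surjections, which makes the ``thin out each difference set to a single representative'' step a concrete and visibly dense operation. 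Once this dictionary is set up, the selectivity verification is a routine density argument closed off by genericity of $G$.
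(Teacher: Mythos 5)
Your proposal is correct in substance, but it takes a genuinely different route from the paper's proof, and it is worth comparing the two. The paper's argument is a single, short density argument: given a ground-model coloring $c:[\omega]^2\rightarrow 2$ and an arbitrary condition $E$, apply the classical Ramsey theorem to $[p(E)]^{2}$ to get an infinite monochromatic $M\subseteq p(E)$, then define a rigid surjection $h$ that preserves the equivalence structure of $E$, starts a new class at each element of $M$, and merges every other class into the class of $0$; the coarsening $F=E_h\leq E$ then has $p(F)=M$ (up to the point $0$), so the conditions with $c$-homogeneous representative sets are dense, and genericity finishes. You instead verify selectivity by a density-plus-diagonalization argument (or, alternatively, reduce pair-colorings to the Abstract Nash-Williams Theorem applied to the family of second approximations). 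Both of your routes do work, and both run on exactly the same space-specific engine as the paper's proof, which you correctly isolate in your final paragraph: any infinite subset of $p(E)$ can be realized, together with $0$, as $p(F)$ for a coarsening $F\leq E$ via rigid-surjection merging. What the paper's route buys is brevity and self-containedness: one application of Ramsey's theorem, no handling of names. What your selectivity route buys is the (equivalent, often more usable) selective formulation; but note that to run your density argument below a condition forcing ``$U_n\in\mathcal{U}_{\mathcal{E}_\infty}$ for all $n$'' you must use $\sigma$-closedness (Matet's theorem, quoted in the paper) to fuse countably many strengthenings, each deciding $U_n$ to be a ground-model set and almost containing its representatives in it, before performing the diagonal selection and final merge. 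That fusion step, which you do not mention, is where the real work of your route sits; it is routine, but it is not free.

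Two cautions. First, the ``general fact'' you offer as an alternative --- that a topological Ramsey space forces a selective ultrafilter whenever its first approximations form a bare set with trivial pigeonhole --- is not a theorem you can cite, and as a heuristic it is unreliable: in the spaces $\mathcal{E}_k$ ($2\leq k<\omega$) the first approximations are also structureless singletons, yet $t(\mathcal{G}_k,2)=\sum_{i<k}3^i>1$, because pairs of first approximations sit inside members of the space in several non-isomorphic ways. The correct reason the degree is $1$ for $\mathcal{E}_\infty$ is precisely the realizability fact above: every pair $m<m'$ of representatives of $E$ occurs as $\{p_1(F),p_2(F)\}$ for some $F\leq E$, so pairs have a single ``shape'' and the Nash-Williams homogenization covers all of them; your Nash-Williams route is fine once this is verified, but your stated justification is not the right one. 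Second, the bookkeeping obstacle you flagged is genuine: $0$ is a minimal representative of every partition, so every $p(F)$ contains $0$, and the forced ultrafilter must be taken on the base set of first approximations, which is identified with $\omega\setminus\{0\}$ and generated by the sets $p(E)\setminus\{0\}$. With that convention your arguments go through verbatim (and indeed the paper's own assertion $p(F)=M$ should be read as $p(F)=M\cup\{0\}$, a harmless glitch of the same kind).
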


\begin{proof}
Let $G$ be an $(\mathcal{E}_{\infty},\leq^{\ast})$-generic filter,
and 
let $\mathcal{U}$ be the ultrafilter on $\om$ generated by sets $p(E)$ with $E \in G$.
Suppose we are given a coloring $c:[\omega]^{2} \rightarrow 2$. 
 Define $\mathcal{D}= \{ F \in \mathcal{E}_{\infty}: | c [ p(F)]^{2} |=1 \}$. We will prove that $\mathcal{D}$ is a dense subset of $\mathcal{E}_{\infty}$. 
 Let $E \in \mathcal{E}_{\infty}$ be given. By Ramsey's Theorem, there exists $M \in [p(E)]^{\omega}$ such that $M$ is monochromatic.
  Define a rigid surjection $h: \omega \rightarrow \omega$  recursively as follows. Let  $h(0)=f_{E}(0)=0$. 
  Now, fix $i \ge 1$ and suppose we have defined $h(j)$ for each $j < i$.  There are three cases:
  If there exists $j<i$ such that 
  $f_{E}(i)=f_{E}(j)$, then let $h(i)=h(j)$. 
  Otherwise, for all $j<i$, 
  $f_{E}(i)\ne f_{E}(j)$.
If $i \in M$, let $h(i)=\max \{h(j):j<i\}+1$.
If $i\not\in M$,
let 
  $h(i)=0$. 
  Let $F=E_h$, the partition of $\omega$ generated by $h$. 
  Note that $F \in \mathcal{E}_{\infty}$ and $p(F) = M$. Since $M$ is monochromatic, $F \in \mathcal{D}$. Therefore $\mathcal{D}$ is a dense subset of $(\mathcal{E}_{\infty}, \leq ^{\ast})$. Since $G$ is a generic filter, there exists $H \in \mathcal{D} \cap G$. Then $p(H)$ is monochromatic for the coloring $c$.
\end{proof}

In fact, Navarro Flores has proved recently that every topological Ramsey space forces a Ramsey ultrafilter, answering a question of the first author. 
This result will appear in a forthcoming paper.

\section{Open problems}\label{sec:questions}

The results in Sections 5 and 6   may be used to calculate more Ramsey degrees of ultrafilters. 
Especially of interest are the further degrees $t(\mathcal{E}_k,n)$ for $n\ge 3$, for any $k\ge 2$.

As we  have seen in the previous  section, for several  different classes of  topological Ramsey spaces,
 their pseudointersection number is equal to the respective tower number.
  Also, we know that for  some  topological Ramsey spaces, $\mathcal{R}$, $\mathfrak{p}_{\mathcal{R}}=\mathfrak{t}_{\mathcal{R}}= \mathfrak{p}$ and for others, $\mathfrak{p}_{\mathcal{R}}=\mathfrak{t}_{\mathcal{R}}= \omega_1.$
We conclude this paper with the most pressing open problems in this subject.

\begin{question}
Is $\mathfrak{p}_{\mathcal{R}} \leq \mathfrak{p}$ for every topological Ramsey space $\mathcal{R}$?
\end{question}

\begin{question}
Is there a topological Ramsey space with pseudointersection number different from its tower number?
\end{question}

\begin{question}
Is there a topological Ramsey space  with pseudointersection number different from both $\omega_{1}$ and $\mathfrak{p}?$
\end{question}

\begin{acknowledgements}Dobrinen was supported by National Science Foundation Grants DMS-14247 and DMS-1600781. Navarro Flores was supported by CONACYT, and Dobrinen's National Science Foundation Grants DMS-14247 and DMS-1600781.
\end{acknowledgements}

\bibliographystyle{spmpsci}      
\bibliography{refs}   

\begin{thebibliography}{10}
\providecommand{\url}[1]{{#1}}
\providecommand{\urlprefix}{URL }
\expandafter\ifx\csname urlstyle\endcsname\relax
  \providecommand{\doi}[1]{DOI~\discretionary{}{}{}#1}\else
  \providecommand{\doi}{DOI~\discretionary{}{}{}\begingroup
  \urlstyle{rm}\Url}\fi

\bibitem{highDimensional2}
Arias, A., Dobrinen, N., Gir{\'{o}}n-Garnica, G., Mijares, J.G.: Banach spaces
  from barriers in topological {R}amsey spaces.
\newblock Journal of Logic and Analysis \textbf{10}(5), 42pp (2018)

\bibitem{Barto}
Bartoszy{\'{n}}ski, T., Judah, H.: {S}et {T}heory on the {S}tructure of the
  {R}eal {L}ine.
\newblock A. K. Peters, Ltd. (1995)

\bibitem{Baumgartner/Taylor78}
Baumgartner, J.E., Taylor, A.D.: Partition {T}heorems and {U}ltrafilters.
\newblock Tansactions of the American Mathematical Society \textbf{241},
  283--309 (1978)

\bibitem{Bell}
Bell, M.: On the combinatorial principle p(c).
\newblock Fundations of Mathematics (114), 149--157 (1981)

\bibitem{Blass73}
Blass, A.: The {R}udin-{K}eisler ordering of {P}-{P}oints.
\newblock Transactions of the American Mathematical Society \textbf{179},
  145--166 (1973)

\bibitem{Blass74}
Blass, A.: Ultrafilter mappings and their {D}edekind cuts.
\newblock Transactions of the American Mathematical Society \textbf{188}(2),
  327--340 (1974)

\bibitem{Blass87}
Blass, A.: Ultrafilters related to {H}indman's finite-unions theorem and its
  extensions.
\newblock Contemporary Mathematics \textbf{65}, 89--124 (1987)

\bibitem{Blass88}
Blass, A.: Selective ultrafilters and homogeneity.
\newblock Annals of Pure and Applied Logic \textbf{38}, 215--255 (1988)

\bibitem{Blass/Dobrinen}
Blass, A., Dobrinen, N., Raghavan, D.: The next best thing to a {P}-point.
\newblock Journal of Symbolic Logic \textbf{80}(3), 866--900

\bibitem{Carlson88}
Carlson, T.J.: Some unifying principles in {R}amsey theory.
\newblock Discrete Mathematics \textbf{68}(2-3), 117--169 (1988)

\bibitem{CS}
Carlson, T.J., Simpson, S.G.: A dual form of {R}amsey's theorem.
\newblock Advances in Mathematics \textbf{53}(3), 265--290 (1984)

\bibitem{Carlson/Simpson90}
Carlson, T.J., Simpson, S.G.: Topological {R}amsey theory.
\newblock In: Mathematics of {R}amsey theory{\rm, volume 5 of} {A}lgorithms and
  {C}ombinatorics, pp. 172--183. Springer (1990)

\bibitem{DipriscoMijares}
DiPrisco, C., Mijares, J.G., Nieto, J.: Local {R}amsey theory. {A}n abstract
  approach.
\newblock Mathematical Logic Quarterly \textbf{63}(5), 384--396 (2017)

\bibitem{highDimensional}
Dobrinen, N.: High dimensional {E}llentuck spaces and initial chains in the
  {T}ukey structure of non-p-points.
\newblock Journal of Symbolic Logic \textbf{81}(1), 237--263 (2016)

\bibitem{DobrinenJML16}
Dobrinen, N.: Infinite dimensional {E}llentuck spaces and
  {R}amsey-classification theorems.
\newblock Journal of Mathematical Logic \textbf{16}(1), 37 pp (2016)

\bibitem{Dobrinen/Seals}
Dobrinen, N.: Topological {R}amsey spaces dense in forcings.
\newblock In: Structure and Randomness in Computability and Set Theory, p. 32
  pp. World Scientific (2020).
\newblock To appear; arXiv:1702.02668v2

\bibitem{Dobrinen/Hathaway19}
Dobrinen, N., Hathaway, D.: Classes of barren extensions p. 29 pp (2019).
\newblock Submitted. arXiv:1911.06936

\bibitem{fraisseClasses}
Dobrinen, N., Mijares, J.G., Trujillo, T.: Topological {R}amsey spaces from
  {F}ra{\"{i}}ss{\'{e}} classes, {R}amsey-classification theorems, and initial
  structures in the {T}ukey types of p-points.
\newblock Archive for Mathematical Logic, special issue in honor of James E.
  Baumgartner \textbf{56}(7-8), 733--782.
\newblock (Invited submission)

\bibitem{Dobrinen/Todorcevic11}
Dobrinen, N., Todorcevic, S.: Tukey types of ultrafilters.
\newblock Illinois Journal of Mathematics \textbf{55}(3), 907--951 (2011)

\bibitem{R1}
Dobrinen, N., Todorcevic, S.: A new class of {R}amsey-classification {T}heorems
  and their applications in the {T}ukey theory of ultrafilters, {P}art 1.
\newblock Transactions of the American Mathematical Society \textbf{366}(3),
  1659--1684 (2014)

\bibitem{Rn}
Dobrinen, N., Todorcevic, S.: A new class of {R}amsey-classification {T}heorems
  and their applications in the {T}ukey theory of ultrafilters, {P}art 2.
\newblock Transactions of the American Mathematical Society \textbf{367}(7),
  4627--4659 (2015)

\bibitem{Ellentuck74}
Ellentuck, E.: A new proof that analytic sets are {R}amsey.
\newblock Journal of Symbolic Logic \textbf{39}(1), 163--165 (1974)

\bibitem{Erdos/Rado52}
Erd{\H{o}}s, P., Rado, R.: Combinatorial theorems on classifications of subsets
  of a given set.
\newblock Proceedings of the London Mathematical Society (3) \textbf{2},
  417--439 (1952)

\bibitem{Galvin/Prikry73}
Galvin, F., Prikry, K.: Borel sets and {R}amsey's {T}heorem.
\newblock Journal of Symbolic Logic \textbf{38} (1973)

\bibitem{Gowers}
Gowers, W.T.: An infinite {R}amsey theorem and some {B}anach-space dichotomies.
\newblock Annals of Mathematics \textbf{156}(3), 797--833 (2002)

\bibitem{Henle/Mathias/Woodin85}
Henle, J.M., Mathias, A.R.D., Woodin, W.H.: A barren extension.
\newblock In: Methods in mathematical logic. Lecture Notes in Mathematics,
  1130, Springer (1985)

\bibitem{Hindman74}
Hindman, N.: Finite sums from sequences within cells of a partition of $n$.
\newblock Journal of Combinatorial Theory. Series A \textbf{17}, 1--11 (1974)

\bibitem{Kurilic15}
Kurili{\'{c}}, M.S.: Forcing with copies of countable ordinals.
\newblock Proceedings of the American Mathematical Society \textbf{143}(4),
  1771--1784 (2015)

\bibitem{Laflamme}
Laflamme, C.: Forcing with filters and complete combinatorics.
\newblock Annals of Pure and Applied Logic \textbf{49}(2), 125--163 (1989)

\bibitem{pt}
Malliaris, M., Shelah, S.: General topology meets model theory, on
  {$\mathfrak{p}$} and {$\mathfrak{t}$}.
\newblock Proceedings of the National Academy of Sciences of the United States
  of America \textbf{110}(3), 13300--13305 (2013)

\bibitem{Malliaris/ShelahJAMS16}
Malliaris, M., Shelah, S.: Cofinality spectrum theorems in model theory, set
  theory, and general topology.
\newblock Journal of the American Mathematical Society \textbf{29}(1), 237--297
  (2016)

\bibitem{matet1986}
Matet, P.: Partitions and {F}ilters.
\newblock J. Symbolic Logic \textbf{51}(1), 12--21 (1986)

\bibitem{Mathias77}
Mathias, A.R.D.: Happy families.
\newblock Annals of Mathematical Logic \textbf{12}(1), 59--111 (1977)

\bibitem{Goyo}
Mijares, J.G.: A notion of selective ultrafilter corresponding to topological
  {R}amsey spaces.
\newblock Mathematical Logic Quarterly \textbf{53}(3), 255--267 (2007)

\bibitem{Mildenberger11}
Mildenberger, H.: On {M}illiken-{T}aylor ultrafilters.
\newblock Notre Dame Journal of Formal Logic \textbf{52}(4), 381--394 (2011)

\bibitem{Heike}
Mildenberger, H., Shelah, S.: The near coherence of filters principle does not
  imply the filter dichotomy principle.
\newblock Transactions of the {A}merican {M}athematical {S}ociety
  \textbf{361}(5), 20305--2317 (2009)

\bibitem{Milliken81}
Milliken, K.R.: A partition theorem for the infinite subtrees of a tree.
\newblock Transactions of the American Mathematical Society \textbf{263}(1),
  137--148 (1981)

\bibitem{NashWilliams65}
Nash-Williams, C.S.J.A.: On well-quasi-ordering transfinite sequences.
\newblock Proceedings of the Cambridge Philosophical Society \textbf{61},
  33--39 (1965)

\bibitem{Ramsey30}
Ramsey, F.P.: On a problem of formal logic.
\newblock Proceedings of the London Mathematical Society \textbf{30}, 264--296
  (1929)

\bibitem{Silver70}
Silver, J.: Every analytic set is {R}amsey.
\newblock Journal of Symbolic Logic \textbf{35}, 60--64 (1970)

\bibitem{Szymanski/Zhou81}
Szyma{\'{n}}ski, A., Xua, Z.H.: The behaviour of $\omega^{2^*}$ under some
  consequences of {M}artin's axiom.
\newblock In: General topology and its relations to modern analysis and
  algebra, V (Prague, 1981) (1983)

\bibitem{Stevolibro}
Todorcevic, S.: Introduction to {R}amsey {S}paces.
\newblock Princeton University Press (2010)

\bibitem{TrujilloThesis}
Trujillo, T.: Topological {R}amsey spaces, associated ultrafilters, and their
  applications to the {T}ukey theory of ultrafilters and {D}edekind cuts of
  nonstandard arithmetic.
\newblock Ph.D. thesis, University of Denver (2014)

\bibitem{YuanYuan}
Zheng, Y.Y.: Selective ultrafilters on {F}{I}{N}.
\newblock Proceedings of the {A}merican {M}athematical {S}ociety \textbf{145},
  5071--5086 (2017)

\end{thebibliography}

%
%

\end{document}